\theoremstyle{remark}
\newtheorem{remark}{Remark}  
\theoremstyle{example}
\newtheorem*{example}{Example}  
\numberwithin{equation}{section}
\newcommand{\TheTitle}{A Relaxed Step-Ratio Constraint for Time-Fractional Cahn--Hilliard Equations: Analysis and Computation}
\title{{\TheTitle}\thanks{The work was partially supported by
the National Natural Science Foundation of China
 (Nos.12461069, 11961057), the Science and Technology Project
of Guangxi (No. GuikeAD21220114), and
the Natural Science Foundation of Guangdong Province
of China (No. 2025A1515012121).}}
\author{
Shipeng Li\thanks{School of Mathematics and Statistics,
Guangxi Normal University, Guilin 541006, China.}
\and
Hengfei Ding\thanks{1.School of Mathematics and Statistics,
 Guangxi Normal University, Guilin 541006, China;
2.The Center for Applied
Mathematics of Guangxi (GXNU), Guilin 541006, China;
3.Guangxi Colleges and Universities Key Laboratory of
Mathematical Model and Application (GXNU),
 Guilin 514006, China. (E-mail:dinghf05@163.com).}}
\begin{document}
\maketitle
\begin{abstract}
Numerical solutions of time-fractional differential equations encounter significant challenges arising from solution singularities at the initial time. To address this issue, the construction of nonuniform temporal meshes satisfying $\tau_k/\tau_{k-1} \geq 1$ has emerged as an effective strategy, where $\tau_k$ represents the $k$-th time-step size. For the time-fractional Cahn-Hilliard equation, Liao et al.~[\textit{IMA J. Numer. Anal.}, \textbf{45} (2025), 1425--1454] developed an analytical framework using a variable-step L2 formula with the constraint $0.3960 \leq \tau_k/\tau_{k-1} \leq r^*(\alpha)$, where $r^*(\alpha) \geq 4.660$ for $\alpha \in (0,1)$.
The present work makes substantial theoretical progress by introducing innovative splitting techniques that relax the step-size ratio restriction to $\tau_k/\tau_{k-1} \leq \rho^*(\alpha)$, with $\rho^*(\alpha) > \overline{\rho} \approx 4.7476114$. This advancement provides significantly greater flexibility in time-step selection. Building on this theoretical foundation, we propose a refined L2-type temporal approximation coupled with a fourth-order compact difference spatial discretization, yielding an efficient numerical scheme for the time-fractional Cahn-Hilliard problem. Our rigorous analysis establishes the scheme's fundamental properties, including unique solvability, exact discrete volume conservation, proper energy dissipation laws, and optimal convergence rates.
For practical implementation, we construct a specialized nonuniform mesh that automatically satisfies the relaxed constraint $\rho^*(\alpha) > 4.7476114$. Comprehensive numerical experiments validate the method's robustness and effectiveness, confirming theoretical predictions regarding accuracy, stability, and energy dissipation across various test cases. These results collectively demonstrate the method's superior performance in challenging scenarios, establishing its practical utility for solving time-fractional partial differential equations.
\end{abstract}

\begin{keywords}
Caputo derivative, non-uniform mesh,
time-fractional Cahn--Hilliard equation,
fourth-order compact difference scheme
\end{keywords}

\begin{AMS}
65M06, 35B65
\end{AMS}
\section{Introduction}\label{sec: introduction}

The Cahn-Hilliard equation was originally introduced by Cahn and Hilliard to model the intricate processes of phase separation and coarsening in binary systems within thermodynamic frameworks \cite{Cahn}. Over time, this equation has not only established itself as a fundamental tool in engineering and materials science \cite{Bourdin, Chen, Du2004, Du2005}, but has also found widespread applications across diverse scientific disciplines \cite{Bertozzi2006, Bertozzi2007, Shen1, Shen2}. In recent developments, researchers have recognized that the time-fractional Cahn-Hilliard equation serves as a natural generalization of the classical version. This extended formulation proves particularly valuable for systems whose dynamics depend on historical states \cite{Sadaf, Zhao2019, Fritz}. The incorporation of fractional time derivatives enables the description of memory effects and anomalous diffusion during phase separation, features that remain beyond the reach of conventional Cahn-Hilliard models.

Here, we study the following time-fractional Cahn-Hilliard (TFCH) equation
\begin{align}\label{eq.1.1}
\left\{
\begin{aligned}
  &\partial^\alpha_t u(x,t) = \kappa \Delta f(u)+\kappa \varepsilon^2 \Delta v(x,t),  \quad x\in\Omega,\; t\in(0,T], \\
  & v(x,t)=-\Delta u(x,t),  \quad x\in\Omega,\; t\in(0,T], \\
 &u(x,0) =u_0 (x), \quad x\in\overline{\Omega},
\end{aligned}
\right.
\end{align}
where the spatial domain is given by $\Omega = (a,b)$, and $T > 0$ represents the final time. The nonlinear term is defined as $f(u) = u^3 - u$, while $\varepsilon > 0$ and $\kappa > 0$ denote the interface width parameter and mobility coefficient, respectively. The Caputo fractional derivative $\partial^\alpha_t$ with $0 < \alpha < 1$ is expressed as
\begin{equation}\label{eq.1.2}
\partial^\alpha_t y(t) = I_t^{1-\alpha }y' (t) = \frac{1}{\Gamma(1-\alpha)}\int_0^t (t-s)^{-\alpha} y'(s) \mathrm{d}s,
\end{equation}
where $I_t^\mu$ stands for the Riemann-Liouville fractional integral of order $\mu > 0$, given by
\begin{align*}
I_t^\mu y (t) = \frac{1}{\Gamma(\mu)}\int_0^t (t-s)^{\mu-1} y(s) \mathrm{d}s.
\end{align*}

It is well established that the classical Cahn-Hilliard (CH) model \cite{Elliott} preserves mass conservation, which can be written as
\begin{equation}\label{eq.1.4}
\int_\Omega u(x,t) \mathrm{d}x = \int_\Omega u(x,0) \mathrm{d}x, \quad t \in (0,T].
\end{equation}
Moreover, the model satisfies an energy dissipation property
\begin{align*}
\frac{ \mathrm{d}E}{\mathrm{d}t}=-\kappa \|\nabla f(u)+ \varepsilon^2 \nabla v\| \leq 0 \;  \quad t \in (0,T],
\end{align*}
where $E[u](t)$ represents the Ginzburg–Landau energy functional defined by
\begin{equation}\label{eq.1.6}
E[u](t) = \int_\Omega \left( \frac{\varepsilon^2}{2}|\nabla u|^2 + F(u) \right) \mathrm{d}x,
\end{equation}
with the double-well potential $F(u) = \frac{1}{4}(1 - u^2)^2$. When the fractional order $\alpha$ approaches $1^-$, the TFCH model reduces to the standard integer-order CH model. A key question is whether the TFCH model retains these two fundamental properties.
Previous studies have addressed this question from different perspectives. Tang et al. \cite{Tang} established a global energy dissipation law, proving that $E[u](t)\leq E[u](0)$ for all $t > 0$. Quan et al. \cite{Quan1} introduced nonlocal and weighted energy dissipation laws for time-fractional phase-field models. In \cite{Liao2021}, Liao et al. derived an equivalent formulation involving the Riemann–Liouville derivative and demonstrated that the energy dissipation laws of certain numerical schemes, including the L1, L$1^+$, and L$1_R$ methods, are asymptotically compatible with their integer-order counterparts. Further analysis of these schemes can be found in \cite{Ji,Liao1,Liao2}. More recently, Quan et al. \cite{Quan2} constructed a modified energy functional $\widetilde{E}_\alpha[u]$, which serves as a decreasing upper bound for the original energy $E[u]$. As shown in Proposition 5.1 of \cite{Quan2}, this modified energy is asymptotically compatible with the classical Ginzburg–Landau energy functional.

Due to the nonlocal nature of fractional derivatives and the presence of nonlinear terms in equation (\ref{eq.1.1}), deriving analytical solutions for such problems is highly challenging. As a result, numerical methods have become an essential tool for investigating the dynamic behavior of these equations. A crucial aspect of such numerical approaches lies in the accurate approximation of the Caputo derivative, which requires the development of efficient numerical differentiation formulas.
Over the past few decades, significant progress has been made in constructing numerical approximations for the Caputo derivative of order $\alpha \in (0,1)$. A straightforward and widely used approach, proposed in \cite{Oldham}, employs piecewise linear interpolation to derive a weighted numerical integral, leading to the well-known L1 formula. Wu and Sun \cite{Sun} later demonstrated that the L1 formula achieves an accuracy of order $2 - \alpha$. To further improve convergence orders, Gao et al. \cite{Gao2014} introduced the L$1-2$ formula on a uniform mesh, which attains an order of $3 - \alpha$ at interior points $t_k$ ($k \geq 2$), though the accuracy reduces to $2 - \alpha$ at the first time point $t_1$.
Building upon these developments, Alikhanov \cite{Alikhanov} proposed a more refined scheme known as the L$2-1_\sigma$ formula, which achieves a uniform convergence order of $3 - \alpha$ at the shifted grid points $t_{k+\sigma}$, where $\sigma = 1 - {\alpha}/{2}$. Independently, Lv and Xu \cite{Lv} constructed a different approximation, called the L2 formula, based on piecewise quadratic interpolation. While this method shares the same convergence order as the L$2-1_\sigma$ formula, it employs a distinct interpolation strategy.
Building upon the aforementioned numerical differential formulas, significant progress has been made in developing numerical solutions for time-fractional differential equations. A common framework in these studies relies on the key assumption that the analytical solution and its higher-order derivatives satisfy prescribed smoothness conditions. Such regularity requirements are essential for ensuring the stability and convergence of numerical schemes.

Recent studies have revealed that exact solutions of time-fractional differential equations exhibit weak singularities at $t=0$. This weak singularity in the solution $u$ poses a significant challenge, as it prevents numerical schemes from achieving their expected convergence orders. To address this issue, researchers have proposed constructing numerical differentiation formulas on non-uniform meshes \cite{Kopteva,Liao2020,Stynes2017}. The underlying principle of this approach involves concentrating grid points more densely near the initial point $t=0$ while maintaining sparser distributions away from the origin. This temporal adaptation helps better capture the singular behavior near the initial time.
Following this concept, substantial progress has been made in developing numerical differentiation formulas for Caputo derivatives on non-uniform meshes. Quan et al. \cite{Quan3} pioneered the investigation of $H^1$-stability for L2-type formulas on general nonuniform meshes when applied to subdiffusion equations. Their work established the positive semidefiniteness with the difference multiplier $\triangledown_\tau w^n$ under specific time step ratio constraints, requiring $0.4573328 \leq \tau_{k}/\tau_{k-1} \leq 3.5615528$ for all $k \geq 2$. Subsequently, Liao et al. \cite{Liao2024} made significant advances in analyzing the variable-step fractional order BDF2 implicit scheme for the time-fractional Cahn-Hilliard model. Their work derived a local discrete energy dissipation law and substantially relaxed the time step ratio condition to $0.3960 \leq \tau_{k}/\tau_{k-1} \leq r^*(\alpha)$ for $k \geq 2$, where $r^*(\alpha) \geq 4.660$ when $\alpha \in (0,1)$.

Building upon these theoretical advances, this work develops a numerical discretization of the Caputo derivative using the L2 formula on general nonuniform meshes. Our approach focuses on relaxing the temporal step ratio constraints to achieve greater computational flexibility. Furthermore, we construct an optimized nonuniform mesh that satisfies the proposed step ratio condition, leading to an enhanced numerical scheme for solving equation (\ref{eq.1.1}).
The main theoretical and computational contributions of this study include the following key aspects:
\begin{enumerate}
\item {\bf Improved time step ratio condition for the discrete Caputo derivative:} Through careful modification of the splitting components, the derived time step ratio condition becomes $1 \leq \tau_{k}/\tau_{k-1} \leq \rho^*(\alpha)$ for $k \geq 2$, where the upper bound satisfies $\rho^*(\alpha) > \overline{\rho} \approx 4.7476114$. This new condition offers superior adaptability to non-uniform meshes compared to existing alternatives. A comprehensive analysis is presented in Section \ref{sec: The discrete Caputo derivative}.
\item {\bf Development of an effective numerical scheme:} By combining the L2 formula with a fourth-order compact difference approximation for spatial derivatives, an efficient numerical scheme for equation (\ref{eq.1.1}) is naturally derived.

\item {\bf Rigorous theoretical analysis:} The study provides complete theoretical justification for the proposed difference scheme, including proofs of unique solvability, discrete volume conservation properties, energy decay law, and convergence behavior.

\item {\bf Construction of an optimized non-uniform mesh:} A novel non-uniform mesh is developed that better satisfies the improved time step ratio condition compared to existing mesh constructions.
\end{enumerate}

The remainder of this paper is organized as follows.
Section \ref{sec: The discrete Caputo derivative} introduces
 the L2 nonuniform scheme for temporal discretization.
 The complete numerical scheme is developed in
 Section \ref{sec: Establishment of the numerical scheme},
 followed by rigorous theoretical analysis in Section
\ref{sec:theoretical-analysis}.
Section \ref{sec: Numerical examples} presents extensive
numerical experiments validating the theoretical results.
Finally, Section \ref{sec: Concluding remarks} provides
concluding remarks and discusses potential extensions of this work.

\section{The novel decomposition formulation of the L2 scheme for discretizing the Caputo derivative}
\label{sec: The discrete Caputo derivative}
For a positive integer $N$ and given time $T$, the interval $[0, T]$ is divided into $0 = t_0 < t_1 < \cdots < t_k < t_{k+1} < \cdots < t_N = T$, with variable stepsizes $\tau_k = t_k - t_{k-1}$ for $1 \leq k \leq N$. The maximum step size is denoted by $\tau_{\max} := \max_{1 \leq k \leq N} \tau_k$.
The adjacent time-step ratios are defined as $\rho_1 := 0$ and $\rho_k := \tau_k / \tau_{k-1} \geq 1$ for $2 \leq k \leq N$. For any time-discrete function $w(t_k)$, the backward difference operator is given by $\triangledown_\tau w(t_k) := w(t_k) - w(t_{k-1})$ for $1 \leq k \leq N$.

For $n \geq 2$ and $1 \leq k \leq n-1$, the standard quadratic Lagrangian interpolation polynomial is employed:
\begin{align*}
L_{2,k}(t) = \sum_{j=-1}^{1} w(t_{k+j}) \prod_{\substack{l=-1 \ l \neq j}}^{1} \frac{t - t_{k+l}}{t_{k+j} - t_{k+l}}, \quad k = 1, 2, \ldots, n-1,
\end{align*}
which interpolates at the nodes $(t_{k-1}, w(t_{k-1}))$, $(t_k, w(t_k))$, and $(t_{k+1}, w(t_{k+1}))$. For $k = n$, the polynomial $L_{2,n-1}(t)$ is used instead. The derivative of the interpolation polynomial takes the form
\begin{align*}
    L'_{2,k}(t) &= \frac{\nabla_\tau w(t_k)}{\tau_k} + \frac{2t - t_{k-1} - t_k}{\tau_k + \tau_{k+1}} \left[\frac{\nabla_\tau w(t_{k+1})}{\tau_{k+1}} - \frac{\nabla_\tau w(t_k)}{\tau_k} \right] \\
    &= \frac{\nabla_\tau w(t_{k+1})}{\tau_{k+1}} + \frac{2t - t_{k+1} - t_k}{\tau_k + \tau_{k+1}} \left[ \frac{\nabla_\tau w(t_{k+1})}{\tau_{k+1}} - \frac{\nabla_\tau w(t_k)}{\tau_k} \right].
\end{align*}

Hence, the Caputo derivative $\partial^\alpha_t w(t)$ evaluated at $t_n$ for $2 \leq n \leq N$ is approximated through the definition (\ref{eq.1.2}) as follows
\begin{equation}\label{eq.2.1}
    \begin{aligned}
        \partial^\alpha_t w(t_n) &= \frac{1}{\Gamma(1-\alpha)} \left[ \sum_{k=1}^{n-1} \int_{t_{k-1}}^{t_k} (t_n - s)^{-\alpha} L'_{2,k}(s) \, \mathrm{d}s + \int_{t_{n-1}}^{t_n} (t_n - s)^{-\alpha} L'_{2,n-1}(s) \, \mathrm{d}s \right] + R_t^n \\
        &= \sum_{k=1}^{n} c_{n-k}^{(n)} \nabla_\tau w(t_k) + \frac{\rho_n}{1 + \rho_n} d_0^{(n)} \bigg[\nabla_\tau w(t_n) - \rho_n \nabla_\tau w(t_{n-1}) \bigg] \\
        &\quad + \sum_{k=1}^{n-1} d_{n-k}^{(n)} \frac{\nabla_\tau w(t_{k+1}) - \rho_{k+1} \nabla_\tau w(t_k)}{\rho_{k+1} (1 + \rho_{k+1}) } + R_t^n,
    \end{aligned}
\end{equation}
where the coefficients are given by
\begin{equation}\label{eq.2.2}
	\begin{aligned}
		c_{n-k}^{(n)} &= \frac{(t_n - t_{k-1})^{1-\alpha} - (t_n - t_k)^{1-\alpha}}{\tau_k \Gamma(2-\alpha)}, \\
		d_{n-k}^{(n)} &= \frac{2 \left[ (t_n - t_{k-1})^{2-\alpha} - (t_n - t_k)^{2-\alpha} \right]}{\tau_k^2 \Gamma(3-\alpha)} - \frac{(t_n - t_{k-1})^{1-\alpha} + (t_n - t_k)^{1-\alpha}}{\tau_k \Gamma(2-\alpha)}.
	\end{aligned}
\end{equation}

For the special case when $n=1$, the linear Lagrangian interpolation polynomial is used:
\begin{align*}
L_{1,1}(t) &= \frac{t - t_1}{t_0 - t_1}w(t_0) + \frac{t - t_0}{t_1 - t_0}w(t_1),
\end{align*}
yielding the approximation
\begin{equation}\label{eq.2.3}
	\begin{aligned}
		\partial^\alpha_t w(t_1) &= \frac{1}{\Gamma(1-\alpha)} \int_{t_0}^{t_1} (t_1 - s)^{-\alpha} L'_{1,1}(s)  \mathrm{d}s + R_t^1
		= c_0^{(1)} \triangledown_\tau w(t_1) + R_t^1,
	\end{aligned}
\end{equation}
where
\begin{align*}
c_0^{(1)} = \frac{1}{\tau_1^\alpha \Gamma(2-\alpha)}.
\end{align*}

By reorganizing the terms in (\ref{eq.2.1}) and (\ref{eq.2.3}), we obtain the unified form
\begin{equation}\label{eq.2.5}
\partial^\alpha_t w(t_n) = \sum_{k=1}^n B_{n-k}^{(n)}
 \triangledown_\tau w(t_k) + R_t^n
 =:\sum_{k=1}^n B_{n-k}^{(n)}
 \triangledown_\tau w^k, \\
\end{equation}
where \( w^k \) denotes the numerical approximation of
 \( w(t_k) \), and the discrete kernels \( B_{n-k}^{(n)} \)
 are defined as follows:
\begin{equation}\label{eq.2.6}
	B_{n-k}^{(n)} =
	\begin{cases}
		c_0^{(1)}, & k = 1, \, n = 1, \\
		c_1^{(2)} - \dfrac{d_1^{(2)}}{1 + \rho_2} - \dfrac{\rho_2^2}{1 + \rho_2} d_0^{(2)}, & k = 1, \, n = 2, \\
		c_0^{(n)} + \dfrac{d_1^{(n)}}{\rho_n (1 + \rho_n)} + \dfrac{\rho_n}{1 + \rho_n} d_0^{(n)}, & k = n, \, n \geq 2, \\
		c_{n-1}^{(n)} - \dfrac{d_{n-1}^{(n)}}{1 + \rho_2}, & k = 1, \, n \geq 3, \\
		c_1^{(n)} + \dfrac{d_2^{(n)}}{\rho_{n-1} (1 + \rho_{n-1})} - \dfrac{d_1^{(n)}}{1 + \rho_n} - \dfrac{\rho_n^2}{1 + \rho_n} d_0^{(n)}, & k = n-1, \, n \geq 3, \\
		c_{n-k}^{(n)} + \dfrac{d_{n-k+1}^{(n)}}{\rho_k (1 + \rho_k)} - \dfrac{d_{n-k}^{(n)}}{1 + \rho_{k+1}}, & 2 \leq k \leq n-2, \, n \geq 4.
	\end{cases}
\end{equation}

Next, we carefully examine the truncation error of the above approximation formula, with the precise result given in the following theorem:
\begin{theorem}\label{Th.2.1}
For any function $w(t) \in C^3[0, T ]$, the truncation error of the Caputo derivative approximation satisfies
\begin{align*}
\left|R_{t}^n\right| &= \left| \partial^\alpha_t w(t_{n}) - \sum_{k=1}^{n} B_{n-k}^{(n)}
\triangledown_\tau w(t_k) \right|
\leq \begin{cases}
\dfrac{\alpha}{2\Gamma(3-\alpha)} \max\limits_{t_0 \leq t \leq t_1} |w''(t)| \tau_{1}^{2-\alpha}, & n=1, \\[10pt]
\dfrac{3\alpha+1}{12\Gamma(2-\alpha)} \max\limits_{t_0 \leq t \leq t_n} |w'''(t)| \tau_{\text{max}}^{3-\alpha}, & n \geq 2.
\end{cases}
\end{align*}
\end{theorem}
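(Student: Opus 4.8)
The plan is to return to the integral definition (\ref{eq.1.2}) and read $R_t^n$ as the weighted integral of the error committed when $w'$ is replaced by the derivative of the piecewise interpolant. Writing $\Pi$ for this interpolant (linear on $[t_0,t_1]$ when $n=1$, and the piecewise quadratic assembled from $L_{2,k}$ for $1\le k\le n-1$ together with $L_{2,n-1}$ on the last interval when $n\ge2$) and setting $e:=w-\Pi$, I would exploit that $\Pi$ matches $w$ at every node, so $e$ is continuous on $[0,t_n]$ and vanishes at each of $t_0,\dots,t_n$. Then
$$R_t^n=\frac{1}{\Gamma(1-\alpha)}\int_0^{t_n}(t_n-s)^{-\alpha}e'(s)\,\mathrm{d}s,$$
and the central device is integration by parts: since $e$ vanishes at the nodes and $1-\alpha>0$ (so the singular boundary contribution $(t_n-s)^{-\alpha}e(s)$ dies as $s\to t_n^-$), the boundary terms drop and the derivative is transferred onto the kernel, producing the more singular weight $\alpha(t_n-s)^{-\alpha-1}$ acting on $e$ itself. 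This is exactly what turns the naive, globally growing $O(t_n^{1-\alpha}\tau^2)$ estimate into the pure local power $\tau_{\max}^{3-\alpha}$, because the integral of $(t_n-s)^{-\alpha-1}$ over the history concentrates near $t_{n-1}$ and scales like $\tau_n^{-\alpha}$.

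For $n=1$ I would insert the Lagrange remainder $e(s)=\tfrac12 w''(\xi_s)(s-t_0)(s-t_1)$ (valid for $w\in C^2$, which is all the regularity the kernel manipulation needs, since we never differentiate the remainder). After integration by parts one factor $(t_1-s)$ cancels, and the estimate reduces to $\tfrac{\alpha}{2\Gamma(1-\alpha)}\max_{[t_0,t_1]}|w''|\int_{t_0}^{t_1}(t_1-s)^{-\alpha}(s-t_0)\,\mathrm{d}s$. The remaining integral is elementary, equal to $\tau_1^{2-\alpha}/[(1-\alpha)(2-\alpha)]$, and folding $(1-\alpha)(2-\alpha)\Gamma(1-\alpha)=\Gamma(3-\alpha)$ produces exactly the claimed constant $\tfrac{\alpha}{2\Gamma(3-\alpha)}$.

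For $n\ge2$ I would split $\int_0^{t_n}=\int_0^{t_{n-1}}+\int_{t_{n-1}}^{t_n}$ and treat the two ranges with the same identity but estimate them differently, with $M_3:=\max_{[t_0,t_n]}|w'''|$. On the history $[0,t_{n-1}]$ I would use the cubic remainder $e_k(s)=\tfrac16 w'''(\xi_s)(s-t_{k-1})(s-t_k)(s-t_{k+1})$, bound the node polynomial by its sharp elementary values $(s-t_{k-1})(t_k-s)\le\tau_{\max}^2/4$ and $(t_{k+1}-s)\le 2\tau_{\max}$, giving $|e_k(s)|\le M_3\tau_{\max}^3/12$, and telescope $\int_0^{t_{n-1}}(t_n-s)^{-\alpha-1}\,\mathrm{d}s=\alpha^{-1}(\tau_n^{-\alpha}-t_n^{-\alpha})\le\alpha^{-1}\tau_n^{-\alpha}$. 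On the last interval, since $L_{2,n-1}$ interpolates at $t_{n-2},t_{n-1},t_n$, the remainder carries a factor $(s-t_n)$ that the integration by parts cancels against the kernel, leaving $\tfrac{\alpha}{6\Gamma(1-\alpha)}\int_{t_{n-1}}^{t_n}(t_n-s)^{-\alpha}|w'''|(s-t_{n-2})(s-t_{n-1})\,\mathrm{d}s$, where $(s-t_{n-2})(s-t_{n-1})\le 2\tau_{\max}^2$ and $\int_{t_{n-1}}^{t_n}(t_n-s)^{-\alpha}\,\mathrm{d}s=\tau_n^{1-\alpha}/(1-\alpha)$. Here I would invoke the hypothesis $\rho_k\ge1$: the steps are non-decreasing, so $\tau_n=\tau_{\max}$, which lets me replace $\tau_n^{-\alpha}$ by $\tau_{\max}^{-\alpha}$ in the history term (this direction genuinely requires $\tau_n=\tau_{\max}$) and $\tau_n^{1-\alpha}$ by $\tau_{\max}^{1-\alpha}$ in the last term. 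Using $\Gamma(2-\alpha)=(1-\alpha)\Gamma(1-\alpha)$, the history contributes coefficient $(1-\alpha)/12$ and the last interval coefficient $4\alpha/12$, both over $\Gamma(2-\alpha)$.

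The heart of the argument, and the step I expect to be most delicate, is making these two coefficients add to exactly $(3\alpha+1)/12$: the clean cancellation appears only if every polynomial factor is bounded at its sharp elementary constant ($\tfrac14$ for the complementary product, $2$ for the two-interval span) and if step monotonicity is used to turn $\tau_n$ into $\tau_{\max}$ rather than settling for a generic $\alpha$-dependent bound. Secondary technical points I would check carefully are the vanishing of the singular boundary term at $s=t_n$ (which uses $\alpha<1$), the integrability of the post-integration-by-parts integrand near $t_n$, and the continuity of $\Pi$ across interior nodes that legitimizes integrating by parts over the whole history interval at once.
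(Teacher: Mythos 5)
Your proposal is correct and follows essentially the same route as the paper: integration by parts to transfer the derivative onto the kernel, the Lagrange remainder forms for the linear and quadratic interpolants, the split into history plus last interval with the sharp bounds $\tfrac14\tau_k^2(\tau_k+\tau_{k+1})$ and $(\tau_{n-1}+\tau_n)\tau_n$, the telescoping of $\int(t_n-s)^{-\alpha-1}\,\mathrm{d}s$, and the identity $\Gamma(2-\alpha)=(1-\alpha)\Gamma(1-\alpha)$ to assemble $(3\alpha+1)/12$. You even make explicit a point the paper leaves implicit, namely that $\rho_k\ge 1$ forces $\tau_n=\tau_{\max}$ so that $\tau_{\max}^3\tau_n^{-\alpha}\le\tau_{\max}^{3-\alpha}$ in the history term.
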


\begin{proof}
The proof proceeds by considering two cases based on the value of $n$. For the case when $n=1$, starting from equation (\ref{eq.2.3}), we obtain
\begin{align*}
		\left|R_t^1\right|&= \left| \frac{1}{\Gamma(1-\alpha)} \int_{t_0}^{t_1} (t_1 - s)^{-\alpha} w'(s)  \mathrm{d}s - \frac{1}{\Gamma(1-\alpha)} \int_{t_0}^{t_1} (t_1 - s)^{-\alpha} L'_{1,1}(s)  \mathrm{d}s \right| \\
		&= \frac{1}{\Gamma(1-\alpha)} \left| \int_{t_0}^{t_1} (t_1 - s)^{-\alpha} \left[ w(s) - L_{1,1}(s) \right]'  \mathrm{d}s \right|.
	\end{align*}
Following the result from \cite{Gao2014}, the interpolation error can be expressed as
\begin{align*}
w(t)-L_{1,1}(t)=\frac{1}{2}w''(\eta_0)(t-t_0)(t-t_1),\;
 \eta_0\in (t_0,t_1),\;t\in[t_0,t_1].
\end{align*}
Applying integration by parts yields
	\begin{align*}
		\left|R_t^1\right| &= \frac{1}{\Gamma(1-\alpha)} \left| -\alpha \int_{t_0}^{t_1} (t_1 - s)^{-\alpha-1} \left[ w(s) - L_{1,1}(s) \right]  \mathrm{d}s \right| \\
		&= \frac{\alpha}{2\Gamma(1-\alpha)} \left| \int_{t_0}^{t_1} w''(\eta_0) (s - t_0) (t_1 - s)^{-\alpha}  \mathrm{d}s \right|.
	\end{align*}
	Since the integrand maintains constant sign over the interval, we deduce
	\begin{align*}
		\left|R_t^1\right|&\leq \frac{\alpha}{2\Gamma(1-\alpha)} \max_{t_0 \leq t \leq t_1} |w''(t)| \int_{t_0}^{t_1} (s - t_0) (t_1 - s)^{-\alpha}  \mathrm{d}s.
	\end{align*}
The evaluation of the integral
$
\int_{t_0}^{t_1} (s - t_0)(t_1 - s)^{-\alpha} \, \mathrm{d}s = \frac{\Gamma(1-\alpha)}{\Gamma(3-\alpha)} \tau_1^{2-\alpha},
$
directly implies that the truncation error at the first time step satisfies the bound
\[
\left|R_t^1\right| \leq \frac{\alpha}{2\Gamma(3-\alpha)} \max_{t_0 \leq t \leq t_1} |w''(t)| \, \tau_1^{2-\alpha},
\]
	
For the case when $n \ge 2$, we begin with equation (\ref{eq.2.1}) to derive
\begin{align*}
    \left|R_t^n\right| =&\left| \frac{1}{\Gamma(1-\alpha)}\int_{t_{0}}^{t_{n}} \left(t_{n}-s\right)^{-\alpha}
    w'(s)\mathrm{d}s \right. \\ &\left.-\frac{1}{\Gamma(1-\alpha)}\left[\sum\limits_{k=1}^{n-1}
     \int_{t_{k-1}}^{t_k} (t_{n}-s)^{-\alpha} L'_{2,k}(s) \mathrm{d}s+\int_{t_{n-1}}^{t_{n}} (t_{n}-s)^{-\alpha}
     L'_{2,n-1}(s) \mathrm{d}s \right]  \right|\\
    =& \frac{1}{\Gamma(1-\alpha)} \Bigg{| }  \int_{t_{n-1}}^{t_n} \left(t_{n}-s\right)^{-\alpha}\left[w(s)-L_{2,n-1}(s) \right]'\mathrm{d}s  \\
    &+\sum\limits_{k=1}^{n-1} \int_{t_{k-1}}^{t_k}\left(t_{n}-s\right)^{-\alpha}\left[w(s)-L_{2,k}(s) \right]'\mathrm{d}s \Bigg{| } .
\end{align*}
Using the interpolation error estimate
\begin{align*}
 w(t)-L_{2,k}(t)=\frac{1}{6}w'''(\eta_k)(t-t_{k-1})(t-t_k)(t-t_{k+1}),\; \eta_{k} \in (t_{k-1},t_{k+1}),\;t\in[t_{k-1},t_{k+1}],
\end{align*}
we obtain the following estimate:

	\begin{align*}
		\left|R_t^n\right|&= \frac{1}{\Gamma(1-\alpha)} \Bigg| -\alpha \int_{t_{n-1}}^{t_n} (t_n - s)^{-\alpha-1} \left[ w(s) - L_{2,n-1}(s) \right]  \mathrm{d}s \\
		&\quad - \alpha \sum_{k=1}^{n-1} \int_{t_{k-1}}^{t_k} (t_n - s)^{-\alpha-1} \left[ w(s) - L_{2,k}(s) \right]  \mathrm{d}s \Bigg| \\
		&\leq \frac{\alpha}{6\Gamma(1-\alpha)} \max_{t_0 \leq t \leq t_n} |w'''(t)| \Bigg\{ \int_{t_{n-1}}^{t_n} \left|(s - t_{n-2})(s - t_{n-1})\right| (t_n - s)^{-\alpha}  \mathrm{d}s \\
		&\quad + \sum_{k=1}^{n-1} \int_{t_{k-1}}^{t_k} \left|(s - t_{k-1})(t_k - s)(t_{k+1} - s)\right| (t_n - s)^{-\alpha-1}  \mathrm{d}s \Bigg\}.
	\end{align*}
	Bounding the polynomial factors in the integrands, we note that
	$
		|(s - t_{n-2})(s - t_{n-1})| \leq (\tau_{n-1} + \tau_n) \tau_n,
	$
	 for $s \in [t_{n-1}, t_n]$ and
	$
		|(s - t_{k-1})(t_k - s)(t_{k+1} - s)| \leq \frac{\tau_k^2}{4} (\tau_k + \tau_{k+1})
$ for $s \in [t_{k-1}, t_k]$. Substituting these estimates yields
	\begin{align*}
		\left|R_t^n\right| &\leq \frac{\alpha}{6\Gamma(1-\alpha)} \max_{t_0 \leq t \leq t_n} \left|w'''(t)\right| \Bigg\{ (\tau_{n-1} + \tau_n) \tau_n \int_{t_{n-1}}^{t_n} (t_n - s)^{-\alpha}  \mathrm{d}s \vspace{0.3cm}\\
		&\quad + \sum_{k=1}^{n-1} \frac{\tau_k^2}{4} (\tau_k + \tau_{k+1}) \int_{t_{k-1}}^{t_k} (t_n - s)^{-\alpha-1}  \mathrm{d}s \Bigg\}\vspace{0.3cm} \\
		&= \frac{\alpha}{6\Gamma(1-\alpha)} \max_{t_0 \leq t \leq t_n}
 \left|w'''(t)\right| \Bigg\{ \frac{(\tau_{n-1} + \tau_n) \tau_n^{2-\alpha}}{1 - \alpha}\vspace{0.3cm} \\
		&\quad + \sum_{k=1}^{n-1}
\frac{(\tau_k + \tau_{k+1}) \tau_k^2\left[(t_n - t_k)^{-\alpha} - (t_n - t_{k-1})^{-\alpha}\right]}{4\alpha} \Bigg\} \vspace{0.3cm}\\
		&\leq \frac{3\alpha + 1}{12\Gamma(2-\alpha)} \max_{t_0 \leq t \leq t_n}
\left|w'''(t)\right| \tau_{\max}^{3-\alpha}.
	\end{align*}
This completes the proof of the theorem.
\end{proof}

To facilitate subsequent analysis, the expression (\ref{eq.2.5}) requires reformulation and reorganization. Building upon the decomposition framework presented in \cite{Liao2024}, an improved scheme is developed as follows:
\begin{equation}\label{eq.2.7}
\begin{aligned}\displaystyle
\partial^\alpha_t w(t_{n}) =\left( \theta c_{0}^{(n)}+ \frac{\rho_{n}}{1+\rho_{n}}d_{0}^{(n)} \right) \triangledown_\tau w^{n}
-\frac{\rho_{n}^2}{1+\rho_{n}}d_{0}^{(n)} \triangledown_\tau w^{n-1} +\sum\limits_{k=1}^{n} \widetilde{c}_{n-k}^{(n)}\triangledown_\tau w^{k}.
\end{aligned}
\end{equation}
The crucial aspect of this formulation lies in the parameter selection,
particularly with $$\theta=\frac{1}{2-\alpha}+\frac{2^{1-\alpha}
\alpha^2+\alpha-2\alpha^2}{2(2-\alpha)(1+\overline{\rho})},\;\overline{\rho} \approx 4.7476114,$$ which differs significantly from the simpler coefficient $\frac{1}{2-\alpha}$ employed in \cite{Liao2024}. The discrete kernels $\widetilde{c}_{n-k}^{(n)}$ are carefully constructed through
the following piecewise definition:
\begin{equation}\label{eq.2.8}
\begin{aligned}\displaystyle
\widetilde{c}_{n-k}^{(n)}=\begin{cases}
(1-\theta)c_0^{(1)}, & \mathrm{if} \quad k=1,\;n=1, \\
c_{n-1}^{(n)}-\frac{1}{1+\rho_{2}}d_{n-1}^{(n)} , & \mathrm{if} \quad k=1,\;n\ge 2,\\
(1-\theta)c_{0}^{(n)} +\frac{1} {\rho_{n}\left(1+\rho_{n} \right)} d_{1}^{(n)} , & \mathrm{if} \quad k=n,\;n\ge 2,\\
c_{n-k}^{(n)}+\frac{1}{\rho_{k}\left(1+\rho_{k}\right) }d_{n-k+1}^{(n)} -\frac{1}{1+\rho_{k+1}}d_{n-k}^{(n)} , & \mathrm{if} \quad 2\leq k \leq n-1,\; n\ge 3.
\end{cases}
\end{aligned}
\end{equation}
This modified decomposition offers several analytical advantages over previous approaches. The parameter $\theta$ incorporates higher-order correction terms that account for both the fractional order $\alpha$ and the characteristic value $\overline{\rho}$. The piecewise definition of the discrete kernels ensures proper handling of temporal discretization across different time steps, with special attention given to boundary cases and intermediate temporal nodes. The formulation maintains consistency while improving numerical stability through its balanced treatment of current and previous time step contributions.

To support the theoretical analysis of the finite difference scheme to be developed, we conclude this section by introducing the following key lemmas.
\begin{lemma}\label{Le.2.2}
 For any $\alpha \in (0,1)$, the following inequality holds
\begin{align*}
q(z,y,\alpha):=2\theta(2-\alpha)+\frac{2\alpha z-\alpha z^{2-\frac{\alpha}{2}}}{1+z}-\frac{\alpha y^{2-\frac{\alpha}{2}}}{1+y}>0 \quad \text{for}\; 1\leq z,y<\rho^*,
\end{align*}
where $\rho^* = \rho^*(\alpha)$ represents the unique root of the equation $$\frac{1+\rho^*}{\alpha}+\rho^*-(\rho^*)^{2-\frac{\alpha}{2}}
+2^{-\alpha}\alpha+\frac{1}{2}-\alpha=0.$$
\end{lemma}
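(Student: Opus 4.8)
The plan is to reduce the two-variable inequality to a scalar one by separating the roles of $z$ and $y$. First I would write $q(z,y,\alpha) = 2\theta(2-\alpha) + g(z) - h(y)$, where $g(z) := \frac{\alpha z\,(2 - z^{1-\alpha/2})}{1+z}$ and $h(y) := \frac{\alpha y^{2-\alpha/2}}{1+y}$. A short computation of $g'$ and $h'$ shows that $h$ is strictly increasing on $[1,\rho^*)$ (the numerator of $h'$ has the nonnegative factor $y^{1-\alpha/2}\bigl[(2-\tfrac{\alpha}{2})+(1-\tfrac{\alpha}{2})y\bigr]$), while $g$ is strictly decreasing there (the sign factor $2-(2-\tfrac{\alpha}{2})z^{1-\alpha/2}-(1-\tfrac{\alpha}{2})z^{2-\alpha/2}$ equals $\alpha-1<0$ at $z=1$ and is itself decreasing in $z$). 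Consequently $q$ decreases in both $z$ and $y$, so $\inf_{1\le z,y<\rho^*} q = q(\rho^*,\rho^*,\alpha)$, and it suffices to prove this infimum is nonnegative.

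Next I would evaluate $q(\rho^*,\rho^*,\alpha)$ explicitly. Using $g(\rho^*)-h(\rho^*)=\frac{2\alpha(\rho^*-(\rho^*)^{2-\alpha/2})}{1+\rho^*}$ together with the defining equation of $\rho^*$ to replace $\rho^*-(\rho^*)^{2-\alpha/2}$ by $-\frac{1+\rho^*}{\alpha}-2^{-\alpha}\alpha-\tfrac12+\alpha$, and expanding $2\theta(2-\alpha)=2+\frac{A(\alpha)}{1+\overline{\rho}}$ with $A(\alpha):=2^{1-\alpha}\alpha^2+\alpha-2\alpha^2$, the two constants $2$ cancel and everything collapses to the clean identity
\[
q(\rho^*,\rho^*,\alpha) = A(\alpha)\left(\frac{1}{1+\overline{\rho}} - \frac{1}{1+\rho^*(\alpha)}\right).
\]
Thus the lemma is equivalent to the two facts $A(\alpha)>0$ and $\rho^*(\alpha)\ge\overline{\rho}$. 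The positivity of $A$ is elementary: writing $A(\alpha)=\alpha\bigl[1+\alpha(2^{1-\alpha}-2)\bigr]$, I would verify that $\phi(\alpha):=1-2\alpha+\alpha\,2^{1-\alpha}$ satisfies $\phi(0)=1$, $\phi(1)=0$, and stays positive on $(0,1)$.

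For $\rho^*(\alpha)\ge\overline{\rho}$ I would exploit the shape in $\rho$ of $P(\rho,\alpha):=\frac{1+\rho}{\alpha}+\rho-\rho^{2-\alpha/2}+2^{-\alpha}\alpha+\tfrac12-\alpha$: since $\partial_\rho P=\frac{1}{\alpha}+1-(2-\tfrac{\alpha}{2})\rho^{1-\alpha/2}$, the function starts positive at $\rho=0$, rises to a single maximum, then decreases to $-\infty$, so $P(\cdot,\alpha)\ge 0$ precisely on $[0,\rho^*]$. Hence $\rho^*(\alpha)\ge\overline{\rho}$ is equivalent to the one-variable inequality $\Psi(\alpha):=P(\overline{\rho},\alpha)\ge 0$ on $(0,1)$.

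The hard part will be establishing $\Psi(\alpha)\ge 0$, because $\overline{\rho}\approx 4.7476114$ is precisely the value for which $\min_{\alpha\in(0,1)}\rho^*(\alpha)=\overline{\rho}$, so the bound is tight ($\Psi$ dips to essentially $0$ near $\alpha\approx 0.82$) and no crude estimate has enough margin. I would therefore argue that $\Psi$ is convex on $(0,1)$: its second derivative is dominated by the term $2(1+\overline{\rho})\alpha^{-3}$ from $\frac{1+\overline{\rho}}{\alpha}$, which I would bound below against the two concave contributions coming from $-\overline{\rho}^{\,2-\alpha/2}$ and $\alpha\,2^{-\alpha}$. Since those two cannot both attain their worst case simultaneously, a piecewise estimate over a few subintervals of $(0,1)$ should close the convexity claim. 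Convexity yields a unique interior minimizer $\alpha^*$ with $\Psi'(\alpha^*)=0$, and the proof reduces to certifying $\Psi(\alpha^*)\ge 0$, which holds by the very choice of $\overline{\rho}$; here a rigorous verification — interval arithmetic, or explicit lower bounds for $\Psi$ on a fine grid combined with a derivative/Lipschitz bound — supplies the final, delicate margin.
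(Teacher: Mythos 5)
Your monotonicity reduction (showing $q$ is decreasing in each of $z$ and $y$ on $[1,\rho^*)$, so that everything hinges on the corner value $q(\rho^*,\rho^*,\alpha)$) is exactly the paper's first step, and your derivative computations match theirs. Where you genuinely diverge is in the endgame, and your version is in fact the more careful one. The paper asserts the identity $q(\rho^*,\rho^*,\alpha)=\tfrac{2\alpha}{1+\rho^*}\,q_2(\rho^*,\alpha)=0$; but since $\theta$ is built from the \emph{fixed} constant $\overline{\rho}$ while $q_2$ is evaluated at $\rho^*(\alpha)$, expanding both sides shows they differ by precisely $A(\alpha)\bigl(\tfrac{1}{1+\overline{\rho}}-\tfrac{1}{1+\rho^*}\bigr)$ with $A(\alpha)=2^{1-\alpha}\alpha^2+\alpha-2\alpha^2$. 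Your computation recovers exactly this residual, i.e.\ $q(\rho^*,\rho^*,\alpha)=A(\alpha)\bigl(\tfrac{1}{1+\overline{\rho}}-\tfrac{1}{1+\rho^*}\bigr)$, which is $\ge 0$ rather than $=0$, and the lemma's strict inequality still follows. So your route exposes that the conclusion actually rests on two additional facts the paper's proof never isolates: $A(\alpha)>0$ (your argument is fine; note the concavity of $\alpha\mapsto\alpha 2^{1-\alpha}$ gives the one-line bound $A(\alpha)\ge\alpha(1-\alpha)>0$) and $\rho^*(\alpha)\ge\overline{\rho}$ for all $\alpha\in(0,1)$, which the paper only addresses numerically in Remark 2.2 (reporting $q_2(\overline{\rho},\alpha)\ge q_2(\overline{\rho},0.82265)\approx 9.39\times 10^{-8}$). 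What your approach buys is a transparent accounting of where the constant $\overline{\rho}$ enters; what it costs is that the burden shifts onto the tight scalar inequality $\Psi(\alpha)=q_2(\overline{\rho},\alpha)\ge 0$.

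The one caveat: that last inequality is not actually proved in your proposal (convexity of $\Psi$ plus a certified evaluation at the minimizer is a workable plan, and your convexity estimate does close — $2(1+\overline{\rho})\alpha^{-3}$ dominates $\tfrac{(\ln\overline{\rho})^2}{4}\overline{\rho}^{\,2-\alpha/2}+2\ln 2$ on all of $(0,1)$ — but the final margin of order $10^{-7}$ still requires validated numerics). Since the paper itself relies on the same numerical evidence, this is not a gap relative to the published argument, but you should state explicitly that the lemma as written is only as rigorous as that computation.
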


\begin{proof}
The proof proceeds by analyzing the behavior of the function $q(z,y,\alpha)$. First, differentiating with respect to $z$ yields
\begin{align*}
\partial_z q(z,y,\alpha)=\frac{\alpha}{2(1+z)^2}\left[4-(4-\alpha)z^{1-\frac{\alpha}{2}}
-(2-\alpha)z^{2-\frac{\alpha}{2}} \right].
\end{align*}
Consider the auxiliary function $q_1(z,\alpha)=4-(4-\alpha)z^{1-\frac{\alpha}{2}}-(2-\alpha)z^{2-\frac{\alpha}{2}}$. Analysis shows that $q_1(z,\alpha)$ decreases monotonically for $z>0$, with $q_1(1,\alpha)=2\alpha-2<0$. Consequently, $q(z,y,\alpha)$ decreases in $z$ for $z\in(1,+\infty)$. Furthermore, the derivative with respect to $y$
\begin{align*}
\partial_y q(z,y,\alpha)=\frac{\alpha y^{1-\frac{\alpha}{2}}}{2(1+y)^2}\left[\alpha-4-(2-\alpha)y \right]<0 \quad \text{for}\;y>0
\end{align*}
demonstrates that $q(z,y,\alpha)$ also decreases in $y$ for $y\in(1,+\infty)$. Therefore, we obtain
\begin{align*}
        q(z,y,\alpha) > q(\rho^*,\rho^*,\alpha) = 2\theta(2-\alpha) + \frac{2\alpha \rho^* - 2\alpha (\rho^*)^{2-\frac{\alpha}{2}}}{1+\rho^*}.
    \end{align*}

To complete the proof, we examine the auxiliary function
\begin{align*}
    q_2(\rho,\alpha)=\frac{1+\rho}{\alpha}+\rho-\rho^{2-\frac{\alpha}{2}}
    +2^{-\alpha}\alpha+\frac{1}{2}-\alpha \quad \text{for}\; \;\rho \ge 1,
\end{align*}
whose derivative is
  \begin{align*}
        \partial_{\rho} q_2(\rho,\alpha) = 1 + \frac{1}{\alpha} - \left(2 - \frac{\alpha}{2}\right) \rho^{1-\frac{\alpha}{2}}.
    \end{align*}
The equation $\partial_{\rho} q_2(\rho,\alpha)=0$ possesses a unique root $g>1$ since $\partial_{\rho} q_2(1,\alpha)>0$. Thus, for any $\alpha \in (0,1)$, $q_2(\rho,\alpha)$ increases on $(1,g)$ and decreases on $(g,+\infty)$. Notably,
\begin{align*}
        q_2(g,\alpha) = \frac{1}{\alpha} + g\left(1 + \frac{1}{\alpha}\right)\left(\frac{2-\alpha}{4-\alpha}\right) + 2^{-\alpha}\alpha + \frac{1}{2} - \alpha > 0.
    \end{align*}
while $q_2(+\infty,\alpha)<0$. This guarantees the existence of a unique $\rho^* \in (g,+\infty)$ satisfying $q_2(\rho^*,\alpha)=0$ for any $\alpha \in (0,1)$.
Combining these results yields $q(z,y,\alpha)>q(\rho^*,\rho^*,\alpha)
=\frac{2\alpha}{1+\rho^*}q_2(\rho^*,\alpha)=0$, which completes the proof.
\end{proof}

\begin{remark}\label{Re.1}
Numerical verification shows that $\rho^*(1) \approx 4.864$ and $\rho^*(\alpha) \to +\infty$ as $\alpha \to 0^+$. However, obtaining an explicit solution $\rho^*(\alpha)$ satisfying $q_2(\rho^*,\alpha)=0$ remains challenging. By differentiating the equation with respect to $\alpha$, we derive
\begin{align*}
        \frac{\mathrm{d} \rho^*(\alpha)}{\mathrm{d} \alpha} = & \frac{\dfrac{(\rho^*)^{2-\frac{\alpha}{2}}}{2} \ln\rho^* - \dfrac{1+\rho^*}{\alpha^2} + 2^{-\alpha} - 2^{-\alpha}\alpha \ln 2 - 1}{\left(2-\frac{\alpha}{2}\right)\left(\rho^*\right)^{1-\frac{\alpha}{2}} - \frac{1}{\alpha} - 1} \\
        &= \frac{\alpha \ln\rho^* - 2 \dfrac{1 + \rho^* - 2^{-\alpha}\alpha^2 + 2^{-\alpha}\alpha^3 \ln 2 + \alpha^2}{1 + \rho^* + \alpha \rho^* + 2^{-\alpha}\alpha^2 + \frac{\alpha}{2} - \alpha^2}}{(4-\alpha)\alpha (\rho^*)^{1-\frac{\alpha}{2}} - 2 - 2\alpha} (\rho^*)^{2-\frac{\alpha}{2}}.
    \end{align*}
To analyze this expression, consider the auxiliary function
\[ q_3(\rho^*,\alpha)=\alpha \mathrm{ln}\rho^*-2\frac{1+\rho^*-2^{-\alpha}\alpha^2
	+2^{-\alpha}\alpha^3 \mathrm{ln}2+\alpha^2}{1+\rho^*+\alpha \rho^*+2^{-\alpha}\alpha^2+\frac{\alpha}{2}-\alpha^2}. \]
The complexity of $q_3(\rho^*,\alpha)$ makes it difficult to determine its monotonicity through direct differentiation. Instead, numerical investigation using Matlab reveals how this function varies with increasing $\rho^*$ for different values of $\alpha$. Figure \ref{fig:2.1} demonstrates that $q_3(\rho^*,\alpha)$ increases monotonically with respect to $\rho^*$. Furthermore, analysis confirms that $q_3(\rho^*,\alpha)=0$ possesses a unique positive root for any $\alpha \in (0,1)$.
Numerical solution of the equations $q_2(\rho^*,\alpha)=0$ and $q_3(\rho^*,\alpha)=0$ yields the approximate solution pair $(\rho^*,\alpha) \approx (4.7476114, 0.82265)$. Denoting $\overline{\rho} \approx 4.7476114$, examination shows that $q_2(\overline{\rho},\alpha)$ attains its minimum value at this point, with $q_2(\overline{\rho},\alpha) \ge q_2(\overline{\rho},0.82265) \approx 9.3942\times10^{-8}$. The functional relationship between $\rho^*$ and $\alpha$ in the equation $q_2(\rho^*,\alpha)=0$ is illustrated in Figure \ref{fig:2.2}.

\begin{figure}[htbp]
\centering
\includegraphics[width=0.6\linewidth]{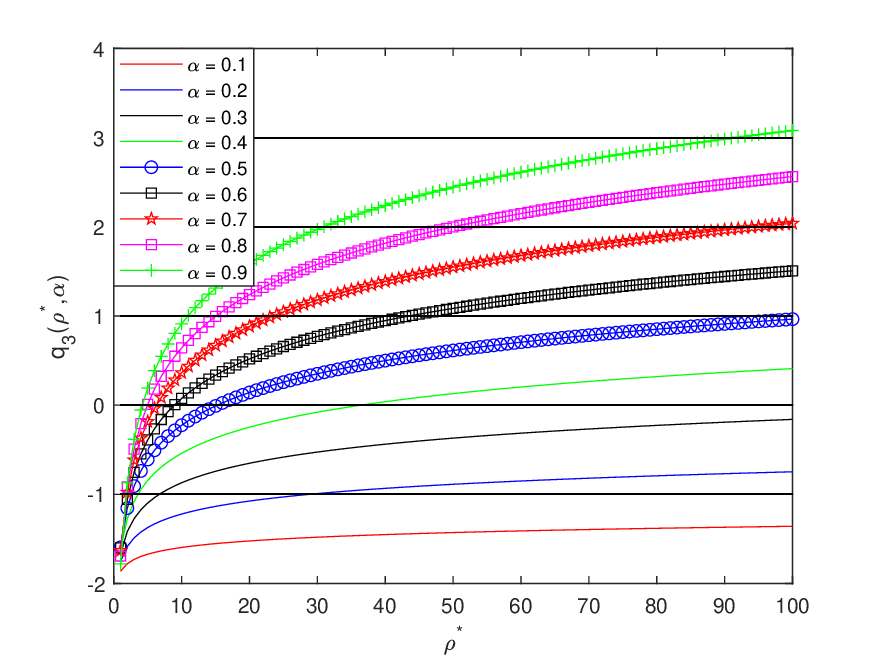}
\caption{Variation of $q_3(\rho^*,\alpha)$ with respect to $\rho^*$ for different values of $\alpha$.}
\label{fig:2.1}
\end{figure}

\begin{figure}[htbp]
\centering
\includegraphics[width=0.6\linewidth]{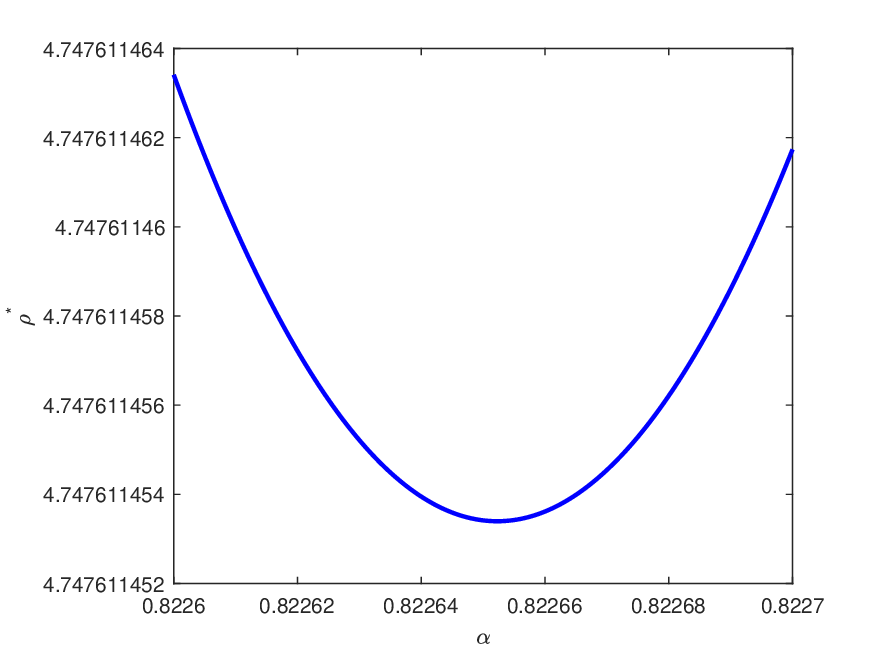}
\caption{Functional dependence between $\rho^*$ and $\alpha$ satisfying $q_2(\rho^*,\alpha)=0$.}
\label{fig:2.2}
\end{figure}
\end{remark}

\begin{lemma}\label{Le.2.3}
  Let $\rho_{n} \leq \rho^*(\alpha)$ hold for all $2\leq n \leq N$. Then the following inequality is valid:
   	\begin{align*}
&	\Bigg{(}	\Big{(} \theta c_{0}^{(n)}+  \frac{\rho_{n}}{1+\rho_{n}}d_{0}^{(n)} \Big{)} \triangledown_\tau w^{n}
		-\frac{\rho_{n}^2}{1+\rho_{n}}d_{0}^{(n)} \triangledown_\tau w^{n-1} \Bigg{)} \triangledown_\tau w^{n}  \\
\ge &\frac{\alpha \rho_{n+1}^{2-\frac{\alpha}{2}} \left( \triangledown_\tau w^{n}\right)^2} {2 \left(1+\rho_{n+1}\right)\tau_{n}^\alpha \Gamma(3-\alpha)} -\frac{\alpha \rho_{n}^{2-\frac{\alpha}{2}} \left( \triangledown_\tau w^{n-1} \right)^2} {2 \left(1+\rho_{n}\right)\tau_{n-1}^\alpha \Gamma(3-\alpha)}+\frac{q(\rho_n,\rho_{n+1},\alpha) \left(\triangledown_\tau w^{n} \right)^2}{2\tau_{n}^\alpha \Gamma(3-\alpha)},
	\end{align*}
 where the quantity $\rho^*(\alpha)$ represents the upper bound defined in Lemma \ref{Le.2.2}, and $q(z,y,\alpha)$ denotes the function introduced in the same lemma.
\end{lemma}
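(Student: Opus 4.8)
The plan is to reduce the claimed inequality to a single perfect-square bound: I would first make all coefficients explicit, then peel off the $q$-term using its definition in Lemma~\ref{Le.2.2} and watch the $\rho_{n+1}$-dependence cancel.

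First I would record the two diagonal kernels at index $k=n$. From \eqref{eq.2.2}, using $\Gamma(3-\alpha)=(2-\alpha)\Gamma(2-\alpha)$, one gets directly
\[
c_0^{(n)}=\frac{2-\alpha}{\tau_n^\alpha\Gamma(3-\alpha)},\qquad d_0^{(n)}=\frac{\alpha}{\tau_n^\alpha\Gamma(3-\alpha)}.
\]
Writing $C=1/(\tau_n^\alpha\Gamma(3-\alpha))$ and abbreviating $X=\triangledown_\tau w^n$, $Y=\triangledown_\tau w^{n-1}$, the left-hand side becomes $C\big[(\theta(2-\alpha)+\tfrac{\alpha\rho_n}{1+\rho_n})X^2-\tfrac{\alpha\rho_n^2}{1+\rho_n}XY\big]$. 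I would then normalize the right-hand side to the same prefactor $C$; the only nontrivial point is the negative term, whose denominator carries $\tau_{n-1}^\alpha$. Using $\tau_{n-1}=\tau_n/\rho_n$ converts $1/\tau_{n-1}^\alpha$ into $\rho_n^\alpha/\tau_n^\alpha$, so that term turns into $C\cdot\frac{\alpha\rho_n^{2+\alpha/2}}{2(1+\rho_n)}Y^2$. Dividing through by $C>0$ reduces the claim to a comparison of two quadratic forms in $X,Y$.

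Next I would substitute $q(\rho_n,\rho_{n+1},\alpha)$ from Lemma~\ref{Le.2.2}. The decisive observation is that the $\rho_{n+1}$-dependent piece of $q/2$ exactly cancels the $\rho_{n+1}$-term arising from the first summand on the right, leaving a quantity depending only on $\rho_n$. After this cancellation, the difference (LHS $-$ RHS), divided by $C$, collapses to
\[
\frac{\alpha\rho_n^{2-\alpha/2}}{2(1+\rho_n)}X^2-\frac{\alpha\rho_n^2}{1+\rho_n}XY+\frac{\alpha\rho_n^{2+\alpha/2}}{2(1+\rho_n)}Y^2=\frac{\alpha\rho_n^2}{2(1+\rho_n)}\Big(\rho_n^{-\alpha/4}X-\rho_n^{\alpha/4}Y\Big)^2\ge 0,
\]
where the cross term matches because $2\,\rho_n^{-\alpha/4}\rho_n^{\alpha/4}=2$. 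Since $\rho_n\ge 1>0$, the prefactor is nonnegative and the square is nonnegative, which establishes the inequality.

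The argument carries no genuine analytic difficulty; the main obstacle is organizational, namely transporting the prefactors $\tau_n^\alpha,\tau_{n-1}^\alpha$ and the fractional exponents $2\pm\alpha/2$ accurately through the substitution so that the $\rho_{n+1}$-terms cancel and the surviving $\rho_n$-quadratic is recognized as a perfect square. I would also remark that the hypothesis $\rho_n\le\rho^*(\alpha)$ is not needed for this algebraic step itself; it enters only afterwards, via Lemma~\ref{Le.2.2}, to ensure $q(\rho_n,\rho_{n+1},\alpha)>0$ so that the bound is useful once summed over $n$ in the energy estimate.
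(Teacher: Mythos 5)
Your proposal is correct and is essentially the paper's own argument: the paper applies Young's inequality to the cross term with weights $\tau_{n-1}^{\alpha/4}/\tau_n^{\alpha/4}$, which is exactly the perfect square $\bigl(\rho_n^{-\alpha/4}X-\rho_n^{\alpha/4}Y\bigr)^2\ge 0$ that you isolate, and then regroups the $(\triangledown_\tau w^n)^2$ coefficient into the $\rho_{n+1}$-term plus $q/2$ by the same cancellation you describe. Your closing remark is also accurate — the paper's proof of this lemma likewise never uses $\rho_n\le\rho^*(\alpha)$; that hypothesis only serves to make $q>0$ via Lemma \ref{Le.2.2}.
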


\begin{proof}
The proof begins by expanding the left-hand side using the definition (\ref{eq.2.2}), which yields
\begin{align*}
& \Bigg{(} \Big{(} \theta c_{0}^{(n)}+ \frac{\rho_{n}}{1+\rho_{n}}d_{0}^{(n)} \Big{)} \triangledown_\tau w^{n}
-\frac{\rho_{n}^2}{1+\rho_{n}}d_{0}^{(n)} \triangledown_\tau w^{n-1} \Bigg{)} \triangledown_\tau w^{n} \\
= & \frac{\theta \left(\triangledown_\tau w^{n} \right)^2 }{\tau_{n}^\alpha \Gamma(2-\alpha)} +\frac{\alpha \rho_{n} \left( \triangledown_\tau w^{n} \right)^2}{ \left(1+\rho_{n}\right)\tau_{n}^\alpha \Gamma(3-\alpha)} -\frac{\alpha \rho_{n}^2 \triangledown_\tau w^{n}\cdot \triangledown_\tau w^{n-1} }{ \left(1+\rho_{n}\right)\tau_{n}^\alpha \Gamma(3-\alpha)}.
\end{align*}
An application of Young's inequality provides the estimate
\begin{align*}
	\triangledown_\tau w^{n}\cdot \triangledown_\tau w^{n-1}
	&=\frac{\tau_{n-1}^{\frac{\alpha}{4}}}{\tau_{n}^{\frac{\alpha}{4}}}\triangledown_\tau w^{n}\cdot \frac{\tau_{n}^{\frac{\alpha}{4}}}
	{\tau_{n-1}^{\frac{\alpha}{4}}}\triangledown_\tau w^{n-1}
   \leq \frac{1}{2} \left( \frac{\tau_{n-1}^{\frac{\alpha}{4}}}
   {\tau_{n}^{\frac{\alpha}{4}}}\triangledown_\tau w^{n}\right)^2 +\frac{1}{2} \left( \frac{\tau_{n}^{\frac{\alpha}{4}}}
   {\tau_{n-1}^{\frac{\alpha}{4}}}\triangledown_\tau w^{n-1} \right)^2 \\
   =&\frac{\tau_{n-1}^{\frac{\alpha}{2}}}
   {2\tau_{n}^{\frac{\alpha}{2}}}
   \left( \triangledown_\tau w^{n} \right)^2 +\frac{\tau_{n}^{\frac{\alpha}{2}}}{2\tau_{n-1}^{\frac{\alpha}{2}}} \left( \triangledown_\tau w^{n-1} \right)^2.
\end{align*}
Substituting this inequality into the previous expression leads to the lower bound
\begin{align*}
& \Bigg{(} \Big{(} \theta c_{0}^{(n)}+ \frac{\rho_{n}}{1+\rho_{n}}d_{0}^{(n)} \Big{)} \triangledown_\tau w^{n}
-\frac{\rho_{n}^2}{1+\rho_{n}}d_{0}^{(n)} \triangledown_\tau w^{n-1} \Bigg{)} \triangledown_\tau w^{n} \\
\ge & \frac{\theta \left(\triangledown_\tau w^{n} \right)^2 }{\tau_{n}^\alpha \Gamma(2-\alpha)}+\frac{\alpha \rho_{n} \left(\triangledown_\tau w^{n} \right)^2}
{ \left(1+\rho_{n}\right)\tau_{n}^\alpha \Gamma(3-\alpha)} -\frac{\alpha \rho_{n}^{2-\frac{\alpha}{2}} \left(\triangledown_\tau w^{n} \right)^2 }{ 2 \left(1+\rho_{n}\right)\tau_{n}^\alpha \Gamma(3-\alpha)} -\frac{\alpha \rho_{n}^{2-\frac{\alpha}{2}} \left(\triangledown_\tau w^{n-1} \right)^2 }{ 2 \left(1+\rho_{n}\right)\tau_{n-1}^\alpha \Gamma(3-\alpha)} \\
= &\frac{\alpha \rho_{n+1}^{2-\frac{\alpha}{2}} \left(\triangledown_\tau w^{n} \right)^2}{2 \left(1+\rho_{n+1}\right)\tau_{n}^\alpha \Gamma(3-\alpha)} -\frac{\alpha \rho_{n}^{2-\frac{\alpha}{2}} \left(\triangledown_\tau w^{n-1} \right)^2}{2 \left(1+\rho_{n}\right)\tau_{n-1}^\alpha \Gamma(3-\alpha)}+\frac{q(\rho_n,\rho_{n+1},\alpha) \left(\triangledown_\tau w^{n} \right)^2}{2\tau_{n}^\alpha \Gamma(3-\alpha)}.
\end{align*}
This completes the proof of the lemma.
\end{proof}

Building upon these results, we establish the following lemma which extends previous work in \cite{Ji}, \cite{Quan2}, and \cite{Liao2024}:

\begin{lemma}\label{Le.2.4}
For an arbitrary fixed index $n \ge 2$ and given positive kernels $\left\{\chi_{n-j}^{(n)}\right\}_{j=1}^{n}$, consider the auxiliary kernels defined by $\mathbf{a}_{0}^{(n)} := (2 - \sigma_{min}) \chi_{0}^{(n)}$ and
$\mathbf{a}_{n-j}^{(n)} := \chi_{n-j}^{(n)}$ for $1\leq j\leq n-1$. Under the assumption that there exists a constant $\sigma_{min}\in [0, 2)$ such that the modified kernels $\mathbf{a}_{n-j}^{(n)}$ satisfy the following conditions
\begin{itemize}
\item[(1)] Monotonicity along rows:  $\mathbf{a}_{n-j-1}^{(n)} \ge \mathbf{a}_{n-j}^{(n)} > 0$ holds for all  $1\leq j\leq n-1$;
\item[(2)] Monotonicity along columns:  $\mathbf{a}_{n-j-1}^{(n-1)} \ge \mathbf{a}_{n-j}^{(n)}$ whenever $1\leq j\leq n-1$;
\item[(3)] Convexity property: The inequality $\mathbf{a}_{n-j-2}^{(n-1)}-\mathbf{a}_{n-j-1}^{(n-1)} \ge \mathbf{a}_{n-j-1}^{(n)} - \mathbf{a}_{n-j}^{(n)}$ is satisfied for  $1\leq j\leq n-2$;
\end{itemize}
then for any real sequence $\left\{\phi_{k}\right\}_{k=1}^{n}$, the discrete gradient structure takes the form
\begin{align*}
2\phi_n\sum_{j=1}^n\chi_{n-j}^{(n)}\phi_j = Y[\vec{\phi}_n] - Y[\vec{\phi}_{n-1}] + \sigma_{\min}\chi_0^{(n)}\phi_n^2 + Y_R[\vec{\phi}_n]\quad\text{for}\quad n\ge 1,
\end{align*}
where $\vec{\phi}_n = (\phi_n,\phi_{n-1},\cdots,\phi_1)^T$. The non-negative quadratic functionals $Y$ and $Y_R$ are defined respectively as
\begin{align*}
Y[\vec{\phi}_n] &:= \sum_{j=1}^{n-1}\left(\mathbf{a}_{n-j-1}^{(n)} - \mathbf{a}_{n-j}^{(n)}\right)\bigg(\sum_{\ell=j+1}^n \phi_\ell\bigg)^2 + \mathbf{a}_{n-1}^{(n)}\bigg(\sum_{\ell=1}^n \phi_\ell\bigg)^2\quad\text{for } n\ge 1,
\end{align*}
and
\begin{align*}
Y_R[\vec{\phi}_n] &:= \sum_{j=1}^{n-2}\left(\mathbf{a}_{n-j-2}^{(n-1)} - \mathbf{a}_{n-j-1}^{(n-1)} - \mathbf{a}_{n-j-1}^{(n)} + \mathbf{a}_{n-j}^{(n)}\right)\left(\sum_{\ell=j+1}^{n-1}\phi_\ell\right)^2\\&
 +\left(\mathbf{a}_{n-2}^{(n-1)} - \mathbf{a}_{n-1}^{(n)}\right)
 \left(\sum_{\ell=1}^{n-1} \phi_\ell\right)^2\quad\text{for } n\ge 2.
\end{align*}
Moreover, the convolution kernels $\chi_{n-k}^{(n)}$ exhibit positive definiteness through the
\begin{align*}
  2 \sum_{k=1}^n \phi_k \sum_{j=1}^k \chi_{k-j}^{(k)} \phi_j \ge Y[\vec{\phi}_n] + \sigma_{\min} \sum_{k=1}^n \chi_0^{(k)} \phi_k^2 \quad \text{for } n\ge 1.
\end{align*}
\end{lemma}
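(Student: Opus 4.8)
The plan is to prove the stated equality (the discrete gradient structure) first, and then to deduce the positive-definiteness inequality from it by a telescoping summation. The key preliminary reduction is to pass from the kernels $\chi_{n-j}^{(n)}$ to the auxiliary kernels $\mathbf{a}_{n-j}^{(n)}$: since $\mathbf{a}_0^{(n)} = (2-\sigma_{\min})\chi_0^{(n)}$ while $\mathbf{a}_{n-j}^{(n)} = \chi_{n-j}^{(n)}$ for $1 \le j \le n-1$, the two convolution forms differ only in the diagonal entry, so that $2\phi_n\sum_{j=1}^n\mathbf{a}_{n-j}^{(n)}\phi_j = 2\phi_n\sum_{j=1}^n\chi_{n-j}^{(n)}\phi_j + 2(1-\sigma_{\min})\chi_0^{(n)}\phi_n^2$. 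Hence it suffices to establish the pure auxiliary-kernel identity $2\phi_n\sum_{j=1}^n\mathbf{a}_{n-j}^{(n)}\phi_j = Y[\vec{\phi}_n] - Y[\vec{\phi}_{n-1}] + \mathbf{a}_0^{(n)}\phi_n^2 + Y_R[\vec{\phi}_n]$; substituting the diagonal relation then collapses the constant $\mathbf{a}_0^{(n)} - 2(1-\sigma_{\min})\chi_0^{(n)}$ to exactly $\sigma_{\min}\chi_0^{(n)}$, returning the diagonal term claimed in the lemma.

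Before the identity I would record that $Y$ and $Y_R$ are genuinely non-negative quadratic functionals, which is where the three hypotheses enter. Condition (1) (row monotonicity) makes every coefficient $\mathbf{a}_{n-j-1}^{(n)} - \mathbf{a}_{n-j}^{(n)}$ in $Y$ non-negative, while $\mathbf{a}_{n-1}^{(n)} > 0$ controls the remaining square, giving $Y[\vec{\phi}_n] \ge 0$. Condition (3) (convexity) is precisely the statement that each interior coefficient $\mathbf{a}_{n-j-2}^{(n-1)} - \mathbf{a}_{n-j-1}^{(n-1)} - \mathbf{a}_{n-j-1}^{(n)} + \mathbf{a}_{n-j}^{(n)}$ of $Y_R$ is non-negative, and the boundary coefficient $\mathbf{a}_{n-2}^{(n-1)} - \mathbf{a}_{n-1}^{(n)}$ is non-negative by condition (2) (column monotonicity at its first index), so $Y_R[\vec{\phi}_n] \ge 0$.

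To prove the identity itself I would introduce the tail sums $T_j := \sum_{\ell=j}^n \phi_\ell$, so that $\phi_n = T_n$, $\phi_j = T_j - T_{j+1}$ and $T_{n+1}=0$, together with their lower-level analogues $S_j := \sum_{\ell=j}^{n-1}\phi_\ell = T_j - \phi_n$. Rewriting $Y[\vec{\phi}_n]$ in the variables $T_j$, and both $Y[\vec{\phi}_{n-1}]$ and $Y_R[\vec{\phi}_n]$ in the variables $S_j$, I would expand the right-hand side and collect the coefficient of each product $\phi_i\phi_j$. Each kernel value is recovered through the telescoping identity that writes $\mathbf{a}_{n-j}^{(n)}$ as $\mathbf{a}_{n-1}^{(n)}$ plus the partial sum of the consecutive differences $\mathbf{a}_{n-i-1}^{(n)} - \mathbf{a}_{n-i}^{(n)}$ that serve as the coefficients of $Y$; the substitution $S_j = T_j - \phi_n$ is what generates the cross terms matching $2\phi_n\sum_j\mathbf{a}_{n-j}^{(n)}\phi_j$. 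By construction, $Y_R$ absorbs exactly the discrepancy between $Y[\vec{\phi}_{n-1}]$, assembled from the row-$(n-1)$ kernels $\mathbf{a}^{(n-1)}$, and the portion of $Y[\vec{\phi}_n]$ that the telescoping in $n$ leaves unmatched. I expect this coefficient matching to be the main obstacle: it is purely algebraic but bookkeeping-intensive. The cleanest route is probably induction on $n$, verifying the base cases $n=1,2$ by hand and propagating the identity via the column and convexity relations; equivalently, a double Abel summation of $\sum_j\mathbf{a}_{n-j}^{(n)}\phi_j$ re-expresses the form through second differences of the kernels and iterated partial sums, after which the $Y,Y_R$ splitting can be read off (and the appearance of second differences makes the role of the convexity hypothesis transparent).

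Finally, with the single-step identity in hand, I would replace $n$ by $k$ and sum over $k = 1,\dots,n$. The differences $Y[\vec{\phi}_k] - Y[\vec{\phi}_{k-1}]$ telescope to $Y[\vec{\phi}_n] - Y[\vec{\phi}_0] = Y[\vec{\phi}_n]$ using $Y[\vec{\phi}_0]=0$, the diagonal contributions accumulate into $\sigma_{\min}\sum_{k=1}^n\chi_0^{(k)}\phi_k^2$, and the residuals $Y_R[\vec{\phi}_k]$, non-negative by the second paragraph, may be discarded. This yields the positive-definiteness bound $2\sum_{k=1}^n\phi_k\sum_{j=1}^k\chi_{k-j}^{(k)}\phi_j \ge Y[\vec{\phi}_n] + \sigma_{\min}\sum_{k=1}^n\chi_0^{(k)}\phi_k^2$ and completes the argument.
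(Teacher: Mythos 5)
The paper itself gives no proof of this lemma---it is imported from the cited works of Ji et al., Quan et al.\ and Liao et al.---so there is no in-paper argument to compare against; judged on its own terms, your plan is correct. The reduction from $\chi$ to $\mathbf{a}$ is exact (the diagonal constant $\mathbf{a}_0^{(n)}-2(1-\sigma_{\min})\chi_0^{(n)}$ does collapse to $\sigma_{\min}\chi_0^{(n)}$), your identification of where each hypothesis enters (row monotonicity for $Y\ge 0$, convexity for the interior coefficients of $Y_R$, column monotonicity for its boundary coefficient) is right, and the final telescoping is routine---though note that the summed inequality implicitly needs hypotheses (1)--(3) at every level $k\le n$, not only at the fixed $n$. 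The one step you hedge on, the ``bookkeeping-intensive'' coefficient matching, is in fact much cleaner than you fear, and no induction is needed: in your notation $S_j=T_j-\phi_n$, the combination $Y_R[\vec{\phi}_n]-Y[\vec{\phi}_{n-1}]$ cancels every $\mathbf{a}^{(n-1)}$ term identically and equals $-\sum_{j=1}^{n-2}\big(\mathbf{a}_{n-j-1}^{(n)}-\mathbf{a}_{n-j}^{(n)}\big)S_{j+1}^2-\mathbf{a}_{n-1}^{(n)}S_1^2$, built solely from row-$n$ kernels. Adding $Y[\vec{\phi}_n]$ pairs each $T_{j+1}^2$ with the matching $S_{j+1}^2$ (the $j=n-1$ term contributes nothing extra since $S_n=0$), the factorization $T^2-S^2=\phi_n(2T-\phi_n)$ turns the whole right-hand side into $2\phi_n\sum_j(\cdot)T_{j+1}-\mathbf{a}_0^{(n)}\phi_n^2$ after the $\phi_n^2$-coefficients telescope to $\mathbf{a}_0^{(n)}$, and a single Abel summation recovers $2\phi_n\sum_{j=1}^n\mathbf{a}_{n-j}^{(n)}\phi_j$. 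In particular the decomposition is a pure algebraic identity, valid for arbitrary kernels; the three hypotheses are needed only for the non-negativity of $Y$ and $Y_R$ and hence for the final positive-definiteness inequality.
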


We now present the discrete gradient structure by establishing key properties of the auxiliary convolution kernels.

\begin{theorem}\label{Th.2.5}
Let the time-step ratios $\rho_n$ satisfy $\rho_n \ge 1$ for $n \ge 2$. For the discrete kernels $\widetilde{c}_{n-k}^{(n)}$ defined in (\ref{eq.2.8}), consider the auxiliary kernels constructed as follows:
\begin{align*}
J_0^{(n)} := 2\widetilde{c}_0^{(n)}, \;\;
J_{n-k}^{(n)} := \widetilde{c}_{n-k}^{(n)}\quad \text{for} \quad 1 \leq k \leq n-1.
\end{align*}
These auxiliary convolution kernels $J_{n-k}^{(n)}$ possess the following properties:
\begin{itemize}
\item[(1)] Monotonicity: $J_{n-k-1}^{(n)} \ge J_{n-k}^{(n)}$ holds for all $k =1,2, \cdots, n-1$ when $n\ge 2$;
\item[(2)] Convexity: The inequality $J_{n-k-2}^{(n-1)}-J_{n-k-1}^{(n-1)} \ge J_{n-k-1}^{(n)}-J_{n-k}^{(n)}$ is satisfied for $k =1,2, \cdots, n-2$ when $n\ge 3$;
\item[(3)] Dominance: $J_{n-k-1}^{(n-1)} \ge J_{n-k}^{(n)}$ holds for $k =1,2, \cdots, n-1$ when $n\ge 2$.
\end{itemize}
\end{theorem}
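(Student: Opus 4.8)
The plan is to read off the three structural properties directly from the two constituent coefficients in (\ref{eq.2.2}) after recasting them in integral form. Writing $\omega_{1-\alpha}(t)=t^{-\alpha}/\Gamma(1-\alpha)$, a short computation (splitting the quadratic interpolant as in (\ref{eq.2.1}) and integrating the constant and the centred-linear parts separately) shows that
\[
c_{n-k}^{(n)}=\frac{1}{\tau_k}\int_{t_{k-1}}^{t_k}\omega_{1-\alpha}(t_n-s)\,\mathrm{d}s,\qquad
d_{n-k}^{(n)}=\frac{1}{\tau_k^{2}}\int_{t_{k-1}}^{t_k}(2s-t_{k-1}-t_k)\,\omega_{1-\alpha}(t_n-s)\,\mathrm{d}s,
\]
so that $c_{n-k}^{(n)}$ is the mean of $\omega_{1-\alpha}(t_n-\cdot)$ over $[t_{k-1},t_k]$ and $d_{n-k}^{(n)}$ is its first centred moment. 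Since $s\mapsto\omega_{1-\alpha}(t_n-s)$ is positive, increasing and convex on $[0,t_n)$, these representations immediately yield $c_{n-k}^{(n)}>0$, $d_{n-k}^{(n)}>0$, the column decrease $c_{j-1}^{(n)}\ge c_j^{(n)}$, and the level decrease $c_{n-k}^{(n-1)}\ge c_{n-k}^{(n)}$ (and likewise for the moments $d$, using that $\omega_{1-\alpha}(t_{n-1}-s)-\omega_{1-\alpha}(t_n-s)$ is itself increasing in $s$). The whole content of the theorem is then to track how the $d$-corrections and the parameter $\theta$ enter the piecewise kernels (\ref{eq.2.8}) and to show the favourable sign of the leading $c$-differences is preserved; the hypothesis $\rho_n\ge1$ will be used only to order the weight factors $1/[\rho_k(1+\rho_k)]$ and $1/(1+\rho_{k+1})$.

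For the monotonicity property (1) I would substitute (\ref{eq.2.8}) into the difference $J_{n-k-1}^{(n)}-J_{n-k}^{(n)}$ and isolate the leading term $c_{n-k-1}^{(n)}-c_{n-k}^{(n)}\ge0$, then collect the remaining $d$-contributions and bound them via the integral form above. Because the interior formula couples column $k$ with the moment on the \emph{previous} subinterval $[t_{k-2},t_{k-1}]$ through $d_{n-k+1}^{(n)}$, the residual terms must be regrouped so that the positive $c$-gap absorbs the $d$-corrections for $\rho_n\ge1$. The top row $k=n$ (where $J_0^{(n)}=2\widetilde c_0^{(n)}$ carries the doubling factor and only a $+d_1^{(n)}$ correction) and the bottom row $k=1$ (carrying only a $-d_{n-1}^{(n)}$ correction) break the interior pattern and will be treated as two separate subcases, the doubling factor supplying the slack needed for $2\widetilde c_0^{(n)}\ge\widetilde c_1^{(n)}$.

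The dominance property (3), $J_{n-k-1}^{(n-1)}\ge J_{n-k}^{(n)}$, compares the same column $k$ at levels $n-1$ and $n$, where the step-ratio weights are identical; it therefore reduces to the level monotonicity of the $c$-means and $d$-moments established in the first paragraph, again with the boundary columns $k=1$ and $k=n-1$ handled individually to accommodate the $\theta$-term and the missing corrections there.

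The convexity property (2) is the hard part, since it is a second-order (mixed level-and-column) difference $J_{n-k-2}^{(n-1)}-J_{n-k-1}^{(n-1)}\ge J_{n-k-1}^{(n)}-J_{n-k}^{(n)}$: the leading $c$-part is controlled by the convexity of $\omega_{1-\alpha}(t_n-\cdot)$, but the $d$-corrections are sign-indefinite, carry $\rho_k$-dependent weights, and no longer cancel cleanly because of the column index shift noted above. I expect the main effort to lie in showing that the second difference of the weighted $d$-moments stays dominated by the (favourable) second difference of the $c$-means uniformly for all $\rho_n\ge1$; this is exactly where the refined choice of $\theta$ in (\ref{eq.2.7}) is exploited, and where a delicate term-by-term integral estimate over the relevant subintervals will be needed, with $n=3$ and the three boundary columns verified as separate base cases.
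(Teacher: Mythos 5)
Your framework is the right one --- the integral representations
\[
c_{n-k}^{(n)}=\frac{1}{\tau_k}\int_{t_{k-1}}^{t_k}\omega_{1-\alpha}(t_n-s)\,\mathrm{d}s,
\qquad
d_{n-k}^{(n)}=\frac{1}{\tau_k^{2}}\int_{t_{k-1}}^{t_k}(2s-t_{k-1}-t_k)\,\omega_{1-\alpha}(t_n-s)\,\mathrm{d}s
\]
do check out against (\ref{eq.2.2}) (one can verify the algebraic identity $(A+B)(A^{1-\alpha}-B^{1-\alpha})+(A-B)(A^{1-\alpha}+B^{1-\alpha})=2(A^{2-\alpha}-B^{2-\alpha})$ with $A=t_n-t_{k-1}$, $B=t_n-t_k$), and the monotonicity and convexity of $s\mapsto\omega_{1-\alpha}(t_n-s)$ is indeed what drives all three properties. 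You have also correctly located where $\theta$ and the doubling $J_0^{(n)}=2\widetilde{c}_0^{(n)}$ enter (the boundary columns $k=n-1$ and $k=n-2$) and correctly identified the convexity property (2) as the genuinely delicate step. For what it is worth, the paper does not carry out this computation either: its ``proof'' of Theorem \ref{Th.2.5} is a one-line remark that the kernels have the same structure as the $A_{n-k}^{(n)}$ of Liao et al.\ \cite{Liao2024} and that the argument there adapts directly --- which is precisely the integral-representation route you are sketching.

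The gap is that your proposal never actually closes any of the three claims. Every substantive step is stated in the conditional (``I would substitute\ldots'', ``will be treated as two separate subcases'', ``I expect the main effort to lie in\ldots'', ``a delicate term-by-term integral estimate\ldots will be needed''). In particular: (i) for property (1) you do not exhibit the regrouping by which the positive gap $c_{n-k-1}^{(n)}-c_{n-k}^{(n)}$ absorbs the sign-indefinite $d$-corrections carrying the weights $1/[\rho_k(1+\rho_k)]$ and $1/(1+\rho_{k+1})$, nor verify the boundary inequality $2\widetilde{c}_0^{(n)}\ge\widetilde{c}_1^{(n)}$ where the specific value of $\theta$ matters; (ii) for property (2) --- which you yourself flag as the hard part --- nothing beyond a statement of intent is offered, and this is exactly the step that cannot be waved through, since the second differences of the weighted moments $d$ are not controlled by convexity of $\omega_{1-\alpha}$ alone and require the quantitative bounds that \cite{Liao2024} develops over several pages; (iii) the claim that $\rho_n\ge 1$ is used ``only to order the weight factors'' is an assertion, not a demonstration. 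As it stands the proposal is a credible plan of attack that reproduces the standard strategy, but it is not a proof of the theorem.
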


\begin{proof}
The structure of the coefficients $J_{n-k}^{(n)}$ closely resembles that of $A_{n-k}^{(n)}$ in reference \cite{Liao2024}. Consequently, the proof strategy employed in \cite{Liao2024} can be directly adapted to establish the properties of our coefficients. Since the referenced work provides a comprehensive derivation of these results, the detailed proof is omitted here for brevity.
\end{proof}

Building upon Lemma \ref{Le.2.3} and Theorem \ref{Th.2.5}, we establish the following important result regarding the discrete gradient structure.

\begin{theorem}\label{Th.2.6}
For any positive integer $n \geq 1$, let $\mathcal{G}$ be a non-negative functional defined by
\begin{align*}
        \mathcal{G}\left[\triangledown_\tau w^n\right] = \frac{\alpha \rho_{n+1}^{2-\frac{\alpha}{2}} \left(\triangledown_\tau w^n \right)^2}{2 \left(1+\rho_{n+1}\right)\tau_{n}^\alpha \Gamma(3-\alpha)}
        + \frac{1}{2} \sum_{j=1}^{n-1} \left( J_{n-j-1}^{(n)} - J_{n-j}^{(n)} \right) \left( w^n - w^j \right)^2
        + \frac{1}{2} J_{n-1}^{(n)} \left( w^n - w^0 \right)^2
    \end{align*}
Suppose the time-step ratios $\rho_{n}$ satisfy the constraint
\begin{align*}
1 \leq \rho_n \leq \rho^*(\alpha) \quad \text{for all} \quad n \geq 2,
\end{align*}
where $\rho^*(\alpha)$ denotes the upper bound exceeding $\overline{\rho} \approx 4.7476114$, as established in Lemma \ref{Le.2.2} and Remark \ref{Re.1}. Then, for every $2 \leq n \leq N$, the following inequality holds
  \begin{align*}
        \partial^\alpha_t w(t_{n}) \cdot \triangledown_\tau w^n \ge \mathcal{G}\left[\triangledown_\tau w^n\right] - \mathcal{G}\left[\triangledown_\tau w^{n-1}\right] + \frac{q(\rho_n,\rho_{n+1},\alpha) \left(\triangledown_\tau w^{n} \right)^2}{2\tau_{n}^\alpha \Gamma(3-\alpha)}.
    \end{align*}
\end{theorem}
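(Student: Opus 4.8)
The plan is to exploit the engineered decomposition \eqref{eq.2.7}, which writes $\partial^\alpha_t w(t_n)$ as a \emph{principal part}
\[
P_n := \Big(\theta c_0^{(n)} + \tfrac{\rho_n}{1+\rho_n}d_0^{(n)}\Big)\triangledown_\tau w^n - \tfrac{\rho_n^2}{1+\rho_n}d_0^{(n)}\triangledown_\tau w^{n-1}
\]
plus a \emph{history part} $S_n := \sum_{k=1}^n \widetilde{c}_{n-k}^{(n)}\triangledown_\tau w^k$. After testing with $\triangledown_\tau w^n$ and using $\partial^\alpha_t w(t_n)\cdot\triangledown_\tau w^n = P_n\cdot\triangledown_\tau w^n + S_n\cdot\triangledown_\tau w^n$, the two pieces are handled by the two lemmas prepared above: $P_n\cdot\triangledown_\tau w^n$ by Lemma \ref{Le.2.3} and $S_n\cdot\triangledown_\tau w^n$ by the discrete gradient identity of Lemma \ref{Le.2.4} once Theorem \ref{Th.2.5} certifies its hypotheses. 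The essential design feature is that these two contributions supply the two halves of $\mathcal{G}$.

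For the principal part I would introduce the abbreviation $H[\triangledown_\tau w^n] := \frac{\alpha\rho_{n+1}^{2-\alpha/2}(\triangledown_\tau w^n)^2}{2(1+\rho_{n+1})\tau_n^\alpha\Gamma(3-\alpha)}$, so that the index shift $n\mapsto n-1$ produces exactly $H[\triangledown_\tau w^{n-1}] = \frac{\alpha\rho_n^{2-\alpha/2}(\triangledown_\tau w^{n-1})^2}{2(1+\rho_n)\tau_{n-1}^\alpha\Gamma(3-\alpha)}$. With this notation, Lemma \ref{Le.2.3} (whose hypothesis $\rho_n\le\rho^*(\alpha)$ is precisely the upper bound assumed in the theorem) reads
\[
P_n\cdot\triangledown_\tau w^n \ge H[\triangledown_\tau w^n] - H[\triangledown_\tau w^{n-1}] + \frac{q(\rho_n,\rho_{n+1},\alpha)(\triangledown_\tau w^n)^2}{2\tau_n^\alpha\Gamma(3-\alpha)},
\]
which already isolates the leading quadratic term of $\mathcal{G}$ and the $q$-remainder that is meant to survive in the final estimate.

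For the history part I would invoke Lemma \ref{Le.2.4} with $\chi_{n-k}^{(n)} := \widetilde{c}_{n-k}^{(n)}$, $\sigma_{\min}=0$, and $\phi_k := \triangledown_\tau w^k$, so that the auxiliary kernels $\mathbf{a}$ of that lemma coincide with the kernels $J$ of Theorem \ref{Th.2.5}. The three properties furnished by Theorem \ref{Th.2.5}, namely monotonicity, convexity, and dominance (the last two using $\rho_n\ge 1$), are exactly conditions (1)--(3) of Lemma \ref{Le.2.4} after relabeling, so the lemma applies. Telescoping $\sum_{\ell=j+1}^n\phi_\ell = w^n-w^j$ and $\sum_{\ell=1}^n\phi_\ell = w^n-w^0$ converts $Y[\vec{\phi}_n]$ into $\sum_{j=1}^{n-1}\big(J_{n-j-1}^{(n)}-J_{n-j}^{(n)}\big)(w^n-w^j)^2 + J_{n-1}^{(n)}(w^n-w^0)^2$; discarding the non-negative remainder $Y_R[\vec{\phi}_n]\ge 0$ then gives
\[
S_n\cdot\triangledown_\tau w^n = \tfrac12\big(Y[\vec{\phi}_n] - Y[\vec{\phi}_{n-1}] + Y_R[\vec{\phi}_n]\big) \ge \tfrac12\big(Y[\vec{\phi}_n] - Y[\vec{\phi}_{n-1}]\big).
\]

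Finally I would read off from the definition of $\mathcal{G}$ that $\mathcal{G}[\triangledown_\tau w^n] = H[\triangledown_\tau w^n] + \tfrac12 Y[\vec{\phi}_n]$, and, after the index shift, $\mathcal{G}[\triangledown_\tau w^{n-1}] = H[\triangledown_\tau w^{n-1}] + \tfrac12 Y[\vec{\phi}_{n-1}]$. Summing the two lower bounds and regrouping collapses the right-hand side into $\mathcal{G}[\triangledown_\tau w^n] - \mathcal{G}[\triangledown_\tau w^{n-1}] + \frac{q(\rho_n,\rho_{n+1},\alpha)(\triangledown_\tau w^n)^2}{2\tau_n^\alpha\Gamma(3-\alpha)}$, which is the assertion. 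The genuinely delicate point is the compatibility check between the two lemmas: confirming that $\sigma_{\min}=0$ is admissible, that the kernels $J$ of Theorem \ref{Th.2.5} match $\mathbf{a}$ under the correct reindexing (Theorem \ref{Th.2.5}(2) being Lemma \ref{Le.2.4}'s convexity and Theorem \ref{Th.2.5}(3) its column-monotonicity), and that the two leading $H$-terms dovetail \emph{exactly} with the index-shifted quadratic pieces of $\mathcal{G}$ rather than merely up to a controllable discrepancy.
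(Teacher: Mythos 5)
Your proposal is correct and follows exactly the route the paper intends (the paper states Theorem \ref{Th.2.6} without writing out a proof, merely noting that it is "built upon Lemma \ref{Le.2.3} and Theorem \ref{Th.2.5}"): test the decomposition \eqref{eq.2.7} with $\triangledown_\tau w^n$, bound the principal two-term part by Lemma \ref{Le.2.3}, apply Lemma \ref{Le.2.4} with $\chi_{n-k}^{(n)}=\widetilde{c}_{n-k}^{(n)}$, $\sigma_{\min}=0$, $\phi_k=\triangledown_\tau w^k$ (hypotheses certified by Theorem \ref{Th.2.5} under $1\le\rho_n\le\rho^*(\alpha)$), drop $Y_R\ge 0$, and telescope. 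Your bookkeeping of the index shift in the leading quadratic term and of the correspondence between Theorem \ref{Th.2.5}(1)--(3) and Lemma \ref{Le.2.4}'s conditions is accurate, so the argument is complete.
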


\section{Construction of numerical scheme}\label{sec: Establishment of the numerical scheme}
Let $M$ be a positive integer representing the number of spatial discretization points over the domain $\Omega$. The interval $(a,b)$ is partitioned using uniformly distributed grid points $a=x_0<x_1<\cdots<x_M=b$, where each $x_i = a+ih$ with constant mesh size $h=(b-a)/M$. We define two discrete function spaces on this grid:
$$\mathcal{U}_h=\{u\;|\;u=(u_0,u_1,\cdots,u_{M-1},u_M)\},\quad \mathring{\mathcal{U}}_h=\{u\;|\;u\in \mathcal{U}_h,u_0=u_M=0\}.$$

For any sufficiently smooth function $u(x,\cdot) \in C^6[x_{i-1},x_{i+1}]$, the following fourth-order approximation holds \cite{Liao2010}:
\begin{align}\label{eq.3.1}
\mathscr{A} \partial_x^2u(x_i,\cdot) = \delta_x^2u(x_i,\cdot) + \frac{h^4}{360}\int_{0}^{1}\left[u^{(6)}(x_i-\lambda h,\cdot)+u^{(6)}(x_i+\lambda h,\cdot)\right]\eta(\lambda)\mathrm{d}\lambda,
\end{align}
where the weight function $\eta(\lambda)=(1-\lambda)^3[5-3(1-\lambda)^2]$ ensures the approximation accuracy. The discrete operators $\mathscr{A}$ and $\delta_x^2$ are respectively defined by
\begin{align*}
\mathscr{A}u(x_i,t)=\begin{cases}
\frac{1}{12}\left[u(x_{i-1},\cdot)
+10u(x_{i},\cdot) +u(x_{i+1},\cdot)\right], &\quad 1 \leq i \leq M-1,\\
u(x_{i},\cdot), &\quad i=0,M,
\end{cases}
\end{align*}
and
\begin{align*}
\delta_x^2u(x_i,\cdot)=\frac{u(x_{i-1},\cdot)
-2u(x_i,\cdot)+u(x_{i+1},\cdot)}{h^2}.
\end{align*}

By applying the averaging operator $\mathscr{A}$ to both sides of equation (\ref{eq.1.1}) at each grid point $(x_i,t_{n})$ and employing the approximation formula (\ref{eq.3.1}), we obtain the following semi-discrete system:
\begin{equation}\label{eq.3.2}
\left\{
\begin{aligned}
& \mathscr{A}\partial^\alpha_t u(x_i,t_{n}) =
\kappa \delta_x^2 f(u(x_i,t_{n})) +
 \kappa \varepsilon^2 \delta_x^2 v(x_i,t_{n})
 + \mathcal{O}\left(h^4\right), \\
& \mathscr{A} v(x_i,t_{n}) = -\delta_x^2 u(x_i,t_{n})
+ \mathcal{O}\left(h^4\right),
\end{aligned}
\right.
\end{equation}
which holds for all interior points $1\leq i \leq M-1$ and time levels $1\leq n \leq N$. The truncation errors maintain fourth-order accuracy in space, consistent with the approximation properties of the scheme.

Substituting (\ref{eq.2.5}) into (\ref{eq.3.2}) yields the discrete equation
\begin{equation}\label{eq.3.3}
\begin{aligned}
\mathscr{A} \sum_{k=1}^{n}B_{n-k}^{(n)} \left(u(x_i,t_k)-u(x_i,t_{k-1}) \right) = \kappa \delta_x^2 f(u(x_i,t_{n})) + \kappa \varepsilon^2 \delta_x^2 v(x_i,t_{n}) + r_{i}^{n},
\end{aligned}
\end{equation}
which holds for $1\leq i \leq M-1$ and $1\leq n \leq N$. The truncation error $r_i^n$ satisfies
\begin{equation}\label{eq.3.4}
\left|r_{i}^{n}\right| \leq C \left\{
\begin{aligned}
& \left(\tau^{2-\alpha}+h^4\right), \;\; 1\leq i \leq M-1, n=1, \\
& \left(\tau^{3-\alpha}+h^4\right), \;\; 1\leq i \leq M-1, 2\leq n \leq N,
\end{aligned}
\right.
\end{equation}
where $C$ denotes a positive constant
independent of the discretization parameters.
The initial condition from (\ref{eq.1.1})
provides the starting values
\begin{align*}
u(x_i,t_{0}) = u_0(x_i), \quad 1\leq i \leq M-1.
\end{align*}

By omitting the higher-order error term $r_{i}^{n}$ in (\ref{eq.3.3}) and replacing the exact solution $u(x_i,t_n)$ with its numerical approximation $u_{i}^n$, we derive the following numerical scheme for problem (\ref{eq.1.1}):
\begin{equation}\label{eq.3.5}
\begin{cases}\displaystyle
\mathscr{A} \sum\limits_{k=1}^{n}B_{n-k}^{(n)}
\triangledown_\tau u_i^k = \kappa \delta_x^2 f(u_i^n)
+ \kappa \varepsilon^2 \delta_x^2 v_i^n, & 1\leq i \leq M-1, 1\leq n \leq N, \\
\mathscr{A} v_i^n = -\delta_x^2 u_i^n, & 1\leq i \leq M-1, 1\leq n \leq N, \\
u_{i}^{0} = u_0(x_i), & 1\leq i \leq M-1.
\end{cases}
\end{equation}

\section{Theoretical analysis of the numerical scheme}
\label{sec:theoretical-analysis}

\subsection{Unique solvability}\label{sec:unique-solvability}
The unique solvability of the numerical scheme (\ref{eq.3.5}) can be established through the following framework. For arbitrary mesh functions $u,v \in \mathring{\mathcal{U}}_h$, we first introduce the essential inner products and associated norms:
\begin{align*}
&(u,v)=h\sum\limits_{i=1}^{M-1}u_{i}v_{i},\quad \left\|u\right\|=\sqrt{(u,u)}, \quad
\left(\delta_xu,\delta_xv\right)=\frac{1}{h}
\sum\limits_{i=1}^{M}(u_i-u_{i-1})(v_i-v_{i-1}), \\ &\left\|\delta_xu\right\|=\sqrt{(\delta_xu,\delta_xu)},\quad
\left(\delta_x^2u,\delta_x^2v\right)=
h\sum\limits_{i=1}^{M-1}\left(\delta_x^2u_{i}\right)\left(\delta_x^2v_{i}\right), \quad\left\|\delta_x^2u\right\|=\sqrt{\left(\delta_x^2u,\delta_x^2u\right)},\\
&\left(u,v\right)_{1,\mathcal{A}}=\left(\mathscr{A}u,-\delta_x^2v\right),\quad
\left\|\delta_xu\right\|_\mathcal{A}=\sqrt{\left(u,u\right)_{1,\mathcal{A}}},\quad
\left\|u\right\|_\infty=\max\limits_{1\leq i \leq M-1}\left|u_{i}\right|.
\end{align*}

The following lemma provides crucial inequalities for the analysis:

\begin{lemma}\label{Le.4.1}(\cite{Sun2012})
For any mesh function $u\in \mathring{\mathcal{U}}_h$, the following inequalities hold:
\begin{align*}
\frac{2}{3}\left\|\delta_x u\right\|^2 \leq \left\|\delta_x u\right\|_\mathcal{A}^2 \leq \left\|\delta_x u\right\|^2 \quad \text{and} \quad \frac{1}{3}\left\|u\right\|^2 \leq \left\|\mathscr{A}u\right\|^2 \leq \left\|u\right\|^2.
\end{align*}
\end{lemma}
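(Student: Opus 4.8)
The plan is to reduce both norm equivalences to a single algebraic identity for the averaging operator together with two elementary discrete inverse inequalities, all of which exploit the homogeneous boundary condition $u_0 = u_M = 0$ built into $\mathring{\mathcal{U}}_h$. The starting observation is that on every interior node $1 \le i \le M-1$ one has $\mathscr{A}u = u + \tfrac{h^2}{12}\delta_x^2 u$, which follows immediately from the definitions of $\mathscr{A}$ and $\delta_x^2$; since all the norms in the statement are sums over interior nodes only, this identity may be substituted freely wherever $\mathscr{A}$ appears.

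First I would treat the pair involving $\left\|\delta_x u\right\|_\mathcal{A}$. By definition $\left\|\delta_x u\right\|_\mathcal{A}^2 = (\mathscr{A}u, -\delta_x^2 u)$, and inserting $\mathscr{A}u = u + \tfrac{h^2}{12}\delta_x^2 u$ gives
\[
\left\|\delta_x u\right\|_\mathcal{A}^2 = (u, -\delta_x^2 u) - \frac{h^2}{12}\left\|\delta_x^2 u\right\|^2 .
\]
Discrete summation by parts, valid because $u_0 = u_M = 0$, yields the standard identity $(u, -\delta_x^2 u) = \left\|\delta_x u\right\|^2$, so that $\left\|\delta_x u\right\|_\mathcal{A}^2 = \left\|\delta_x u\right\|^2 - \tfrac{h^2}{12}\left\|\delta_x^2 u\right\|^2$. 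The upper bound is then immediate, as the subtracted term is nonnegative. For the lower bound I would invoke the inverse inequality $\left\|\delta_x^2 u\right\|^2 \le \tfrac{4}{h^2}\left\|\delta_x u\right\|^2$, proved in one line by setting $w_i := u_i - u_{i-1}$ and applying $(a-b)^2 \le 2a^2 + 2b^2$ to $\sum_i (w_{i+1}-w_i)^2$; substituting gives $\left\|\delta_x u\right\|_\mathcal{A}^2 \ge (1 - \tfrac13)\left\|\delta_x u\right\|^2 = \tfrac{2}{3}\left\|\delta_x u\right\|^2$.

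For the pair involving $\left\|\mathscr{A}u\right\|$, I would expand $\left\|\mathscr{A}u\right\|^2 = \left\|u + \tfrac{h^2}{12}\delta_x^2 u\right\|^2$ and use $(u, \delta_x^2 u) = -\left\|\delta_x u\right\|^2$ to obtain
\[
\left\|\mathscr{A}u\right\|^2 = \left\|u\right\|^2 - \frac{h^2}{6}\left\|\delta_x u\right\|^2 + \frac{h^4}{144}\left\|\delta_x^2 u\right\|^2 .
\]
The upper bound $\left\|\mathscr{A}u\right\|^2 \le \left\|u\right\|^2$ reduces to $\tfrac{h^4}{144}\left\|\delta_x^2 u\right\|^2 \le \tfrac{h^2}{6}\left\|\delta_x u\right\|^2$, which follows from the inverse inequality already established. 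For the lower bound I would discard the nonnegative last term and apply the companion inverse inequality $\left\|\delta_x u\right\|^2 \le \tfrac{4}{h^2}\left\|u\right\|^2$ — proved identically, now using $u_0 = u_M = 0$ to absorb both endpoint contributions — which yields $\left\|\mathscr{A}u\right\|^2 \ge \left\|u\right\|^2 - \tfrac{h^2}{6}\cdot\tfrac{4}{h^2}\left\|u\right\|^2 = \tfrac{1}{3}\left\|u\right\|^2$.

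The argument has no genuine analytical obstacle; its only delicate points are bookkeeping ones. One must keep track of the fact that $\mathscr{A}u = u + \tfrac{h^2}{12}\delta_x^2 u$ holds only at interior nodes and that every summation-by-parts step depends on the vanishing boundary values, so these hypotheses should be stated explicitly. It is also worth remarking that the constant $\tfrac{2}{3}$ in the first estimate is sharp, since the inverse inequality $\left\|\delta_x^2 u\right\|^2 \le \tfrac{4}{h^2}\left\|\delta_x u\right\|^2$ is attained in the high-frequency limit, whereas the constant $\tfrac{1}{3}$ in the second is not optimal — a spectral decomposition in the discrete sine basis would sharpen it to $\tfrac{4}{9}$ — but the elementary argument above already delivers the stated bound, which is all that the subsequent analysis requires.
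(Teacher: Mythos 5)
Your proof is correct. There is nothing in the paper to compare it against: Lemma \ref{Le.4.1} is stated with a citation to the textbook \cite{Sun2012} and no proof is given in the paper itself, so your write-up supplies a complete, self-contained argument where the authors rely on a reference. The key steps all check out. The interior-node identity $\mathscr{A}u_i = u_i + \tfrac{h^2}{12}\delta_x^2 u_i$ is exactly right, and since every norm involved sums only over $1\leq i\leq M-1$, substituting it is legitimate wherever you do so. The summation-by-parts identity $(u,-\delta_x^2 u)=\|\delta_x u\|^2$ (which does require $u_0=u_M=0$) then gives the two exact expansions
\begin{align*}
\left\|\delta_x u\right\|_\mathcal{A}^2 = \left\|\delta_x u\right\|^2 - \frac{h^2}{12}\left\|\delta_x^2 u\right\|^2,
\qquad
\left\|\mathscr{A}u\right\|^2 = \left\|u\right\|^2 - \frac{h^2}{6}\left\|\delta_x u\right\|^2 + \frac{h^4}{144}\left\|\delta_x^2 u\right\|^2,
\end{align*}
and your two inverse inequalities $\|\delta_x^2 u\|^2 \leq \tfrac{4}{h^2}\|\delta_x u\|^2$ and $\|\delta_x u\|^2\leq \tfrac{4}{h^2}\|u\|^2$ are proved correctly (as you implicitly note, the first needs no boundary condition, only the second does). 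The constants then fall out as $1-\tfrac{1}{3}=\tfrac{2}{3}$ and $1-\tfrac{2}{3}=\tfrac{1}{3}$. Your closing sharpness remarks are also accurate: a discrete sine expansion shows the ratio $\|\delta_x u\|_\mathcal{A}^2/\|\delta_x u\|^2$ has infimum $\tfrac{2}{3}$ (approached at the highest frequency, not attained), while the optimal lower constant for $\|\mathscr{A}u\|^2/\|u\|^2$ is $\tfrac{4}{9}$, so the stated $\tfrac{1}{3}$ is a convenient but non-optimal bound --- which, as you say, is all the paper's subsequent analysis (e.g.\ Theorem \ref{Th.4.2}) requires.
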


Using these definitions, the numerical scheme (\ref{eq.3.5}) can be reformulated in operator notation as:
\begin{equation*}
\begin{cases}\displaystyle
\mathscr{A}\sum\limits_{k=1}^{n}B_{n-k}^{(n)}\triangledown_\tau u^k = \kappa\delta_x^2\left(u^{n}\right)^{\circ 3} - \kappa\delta_x^2 u^{n} + \kappa\varepsilon^2\delta_x^2 v^n, \\
\mathscr{A}v^n = -\delta_x^2 u^n,
\end{cases}
\end{equation*}
where the Hadamard power operation $(u^{n})^{\circ 3}$ denotes the element-wise cubic power of the solution vector $u^{n}$, defined recursively through the Hadamard product $\circ$ as $(u^{n})^{\circ p} = (u^{n})^{\circ (p-1)} \circ u^{n}$.

Now, we establish the unique solvability of the numerical scheme (\ref{eq.3.5}) through the following theorem:

\begin{theorem}\label{Th.4.2}
The nonlinear difference scheme (\ref{eq.3.5}) admits a unique solution provided the maximum time step satisfies
\begin{align*}
\tau_{\max} \leq \sqrt[\alpha]{\frac{(2-\alpha+2\rho_n)h^2}{12\kappa(1+\rho_n)\Gamma(3-\alpha)}}
\end{align*}
for all $n \geq 1$.
\end{theorem}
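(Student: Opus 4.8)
The plan is to recast the scheme at the $n$-th level as a single nonlinear equation for $w := u^n$ and to display it as the first-order optimality condition of a strictly convex, coercive functional, so that its unique minimizer yields the unique solution. First I would eliminate $v^n$: since $\mathscr{A}$ and $\delta_x^2$ are symmetric tridiagonal Toeplitz matrices sharing the sine eigenbasis, they commute, so the discrete Laplacian $\Delta_h := \mathscr{A}^{-1}\delta_x^2$ is self-adjoint and negative definite on $\mathring{\mathcal{U}}_h$; hence $-\Delta_h$ is SPD and invertible, and the second equation of (\ref{eq.3.5}) reads $v^n = -\Delta_h u^n$. Applying $\mathscr{A}^{-1}$ to the first equation and isolating the $w$-dependent part of $\sum_{k}B_{n-k}^{(n)}\triangledown_\tau u^k = B_0^{(n)}(w-u^{n-1}) + P$ (with $P$ a known history term), the scheme becomes, after one application of $(-\Delta_h)^{-1}$,
\begin{equation*}
\frac{B_0^{(n)}}{\kappa}(-\Delta_h)^{-1}w + f(w) + \varepsilon^2(-\Delta_h)w = \tilde R,
\end{equation*}
where $\tilde R$ gathers the known quantities $u^{n-1}$ and $P$.

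Next I would introduce the functional
\begin{equation*}
G(w) = \frac{B_0^{(n)}}{2\kappa}\big(w,(-\Delta_h)^{-1}w\big) + h\sum_{i=1}^{M-1}F(w_i) + \frac{\varepsilon^2}{2}\big((-\Delta_h)w, w\big) - (\tilde R, w),
\end{equation*}
and verify that $\nabla G(w)$ reproduces the displayed equation, using $F'(s)=f(s)$ and the self-adjointness of $\Delta_h$. Its Hessian is $\frac{B_0^{(n)}}{\kappa}(-\Delta_h)^{-1} + \mathrm{diag}(3w_i^2 - 1) + \varepsilon^2(-\Delta_h)$; discarding the two nonnegative contributions $\mathrm{diag}(3w_i^2)$ and $\varepsilon^2(-\Delta_h)$ reduces strict convexity to the operator inequality $\frac{B_0^{(n)}}{\kappa}(-\Delta_h)^{-1}\succ I$, that is, to $\lambda_{\max}(-\Delta_h) < B_0^{(n)}/\kappa$.

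The core of the argument is then a pair of quantitative estimates. On the spatial side, a Rayleigh-quotient computation on the common eigenbasis shows $\lambda_{\max}(-\Delta_h)=\sup_{s\in(0,1)}\frac{12s}{h^2(3-s)} < 6/h^2$. On the temporal side I would bound $B_0^{(n)}$ from below: writing $B_0^{(n)} = \widetilde c_0^{(n)} + \theta c_0^{(n)} + \frac{\rho_n}{1+\rho_n}d_0^{(n)}$ with $\widetilde c_0^{(n)} = \tfrac12 J_0^{(n)} > 0$ by Theorem \ref{Th.2.5}, and using $\theta \ge \frac{1}{2-\alpha}$ together with $c_0^{(n)} = (\tau_n^\alpha\Gamma(2-\alpha))^{-1}$ and $d_0^{(n)} = \alpha(\tau_n^\alpha\Gamma(3-\alpha))^{-1}$, the cancellation $(2-\alpha)\Gamma(2-\alpha)=\Gamma(3-\alpha)$ yields $B_0^{(n)} > \frac{2-\alpha+2\rho_n}{2(1+\rho_n)\tau_n^\alpha\Gamma(3-\alpha)}$ (the same bound holds directly at $n=1$, where $\rho_1=0$). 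Substituting the hypothesis $\tau_n^\alpha \le \tau_{\max}^\alpha \le \frac{(2-\alpha+2\rho_n)h^2}{12\kappa(1+\rho_n)\Gamma(3-\alpha)}$ then produces exactly $B_0^{(n)} > 6\kappa/h^2 > \kappa\,\lambda_{\max}(-\Delta_h)$, the required strict-convexity condition.

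Finally, since $G$ is continuous, strictly convex, and coercive (the quartic growth of $F$ together with the two nonnegative quadratic terms dominates the linear term $(\tilde R, w)$), it attains a unique minimizer on the finite-dimensional space $\mathring{\mathcal{U}}_h$; this minimizer is the unique zero of $\nabla G$, hence the unique $u^n$, and $v^n = -\Delta_h u^n$ is then determined. I expect the main obstacle to be the lower bound on $B_0^{(n)}$: the kernel is defined piecewise and involves the term $d_1^{(n)}$ of uncertain sign, so the clean estimate hinges on eliminating it through the positivity $\widetilde c_0^{(n)} > 0$ rather than estimating it directly, and on the precise Gamma-function cancellation that aligns the temporal lower bound with the stated time-step threshold.
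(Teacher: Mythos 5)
Your proposal is correct, but it follows a genuinely different route from the paper. The paper argues by induction and reduces unique solvability at level $n$ to showing that an associated ``homogeneous system'' (still containing the cubic term) has only the trivial solution; it then tests with $\mathscr{A}u^n$ and $\mathscr{A}v^n$, invokes Lemma \ref{Le.4.1} together with the inverse estimate $\left\|\delta_x u\right\|^2\le 4h^{-2}\left\|u\right\|^2$ to arrive at the threshold $B_0^{(n)}\ge 12\kappa/h^2$, and finally bounds $B_0^{(n)}\ge c_0^{(n)}+\tfrac{\rho_n}{1+\rho_n}d_0^{(n)}=\tfrac{2-\alpha+2\rho_n}{(1+\rho_n)\Gamma(3-\alpha)\tau_n^\alpha}$ by discarding the term $\tfrac{d_1^{(n)}}{\rho_n(1+\rho_n)}$, which is nonnegative (concavity of $s\mapsto s^{1-\alpha}$), not of uncertain sign as you feared; your detour through $\widetilde{c}_0^{(n)}>0$ and $\theta>\tfrac{1}{2-\alpha}$ costs you a factor $2$ in the lower bound for $B_0^{(n)}$, but this is exactly compensated by your sharper spectral threshold $\kappa\lambda_{\max}(-\Delta_h)<6\kappa/h^2$ versus the paper's effective $12\kappa/h^2$, so both arguments close under the identical stated restriction. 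What your variational formulation buys is significant: strict convexity plus coercivity of $G$ yields \emph{existence} of $u^n$ as well as uniqueness, whereas the paper's homogeneous-system reduction is, strictly speaking, a uniqueness device borrowed from linear theory and leaves existence of the nonlinear algebraic system unaddressed. What the paper's route buys is brevity and self-containment: it works entirely in the $\mathscr{A}$-weighted inner products of Lemma \ref{Le.4.1} and needs neither the commutativity/spectral analysis of $\mathcal{H}=\mathscr{A}^{-1}\delta_x^2$ (which the paper only develops later, in Lemma \ref{Le.4.4}) nor the positivity of $\widetilde{c}_0^{(n)}$, which in the paper is inherited implicitly from the framework of Lemma \ref{Le.2.4} and the deferred proof of Theorem \ref{Th.2.5} rather than proved outright. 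All of your quantitative steps (the eigenvalue computation $\max_k \tfrac{12 s_k}{h^2(3-s_k)}<6/h^2$, the cancellation $(2-\alpha)\Gamma(2-\alpha)=\Gamma(3-\alpha)$, and the $n=1$ case with $\rho_1=0$) check out.
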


\begin{proof}
The initial value $u^{0}$ is directly determined by the initial condition (\ref{eq.1.1}). Proceeding by mathematical induction, assume the solutions ${u^{1}, u^{2}, \cdots, u^{n-1}}$ have been uniquely determined. To establish the unique solvability at time level $n$, it suffices to demonstrate that the corresponding homogeneous system
\begin{align*}
\begin{cases}
\mathscr{A} B_{0}^{(n)} u^n = \kappa \delta_x^2 \left(u^{n}\right)^{\circ 3} - \kappa \delta_x^2 u^{n} + \kappa \varepsilon^2 \delta_x^2 v^n, \\
\mathscr{A} v^n = -\delta_x^2 u^n,
\end{cases}
\end{align*}
possesses only the trivial solution.

Taking the inner product of the first equation with
$\mathscr{A} u^n$ yields
\begin{align*}
    B_{0}^{(n)} \left\|\mathscr{A} u^n\right\|^2 =
    -\kappa \left\|\delta_x(u^{n})^{\circ 2}\right\|_\mathcal{A}^2
     + \kappa \left\|\delta_x u^{n}\right\|_\mathcal{A}^2
      - \kappa \varepsilon^2 \left(v^n,u^n\right)_{1,\mathcal{A}}.
\end{align*}
Similarly, the inner product of the second equation
 with $\mathscr{A} v^n$ gives
\begin{align*}
    \left\|\mathscr{A} v^n\right\|^2 =
     \left(v^n,u^n\right)_{1,\mathcal{A}}.
\end{align*}

Combining these results leads to the inequality
\begin{align*}
    B_{0}^{(n)} \left\|\mathscr{A} u^n\right\|^2
     - \kappa \left\|\delta_x u^{n}\right\|_\mathcal{A}^2 = -\kappa \left\|\delta_x\left(u^{n}\right)^{\circ 2}\right\|_\mathcal{A}^2
     - \kappa \varepsilon^2 \left\|\mathscr{A} v^n\right\|^2 \leq 0.
\end{align*}

The non-positivity of the right-hand side implies $|u^{n}|^2 = 0$ when the condition
\begin{align*}
    B_{0}^{(n)} \left\|\mathscr{A} u^n\right\|^2
    - \kappa \left\|\delta_x u^{n}\right\|_\mathcal{A}^2
    \geq \left(\frac{B_{0}^{(n)}}{3}
    - \frac{4\kappa}{h^2}\right) \left\|u^{n}\right\|^2 \geq 0
\end{align*}
is satisfied by follows from definition (\ref{eq.2.6}) and Lemma \ref{Le.4.1}. This condition holds because
\begin{align*}
    B_0^{(n)} \geq c_0^{(n)} + \frac{\rho_n}{1+\rho_n} d_0^{(n)} = \frac{2-\alpha+2\rho_n}{(1+\rho_n)\Gamma(3-\alpha)\tau_n^\alpha} \geq \frac{12\kappa}{h^2}.
\end{align*}

The mathematical induction principle therefore guarantees the unique solvability of the scheme at all time levels, which completes the proof.
\end{proof}

\subsection{Discrete volume conservation}\label{sec: Discrete volume conservation property}
This subsection establishes the discrete volume conservation property (\ref{eq.1.4}) for the proposed numerical scheme (\ref{eq.3.5}). The conservation law plays a crucial role in maintaining the physical fidelity of the numerical solution.

\begin{theorem}\label{Th.4.3}
The numerical scheme (\ref{eq.3.5}) maintains volume
conservation in the discrete sense, satisfying
\begin{align*}
\int_\Omega u(x,t_n)\mathrm{d}x =
\int_\Omega u(x,t_{n-1})\mathrm{d}x \quad \text{for all} \; 1\leq n\leq N.
\end{align*}
\end{theorem}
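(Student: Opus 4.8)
The plan is to pair the first relation of the scheme (\ref{eq.3.5}) with the constant mesh function $\mathbf{1}=(1,1,\dots,1)$ and to track the discrete total mass $\mathcal{M}^n := (u^n,\mathbf{1}) = h\sum_{i=1}^{M-1} u_i^n$, which is the natural discrete surrogate for $\int_\Omega u(x,t_n)\,\mathrm{d}x$. The organising observation is that the compact averaging operator splits as $\mathscr{A}u_i = u_i + \tfrac{h^2}{12}\delta_x^2 u_i$ on the interior, so that once the equation is summed against $\mathbf{1}$ every contribution is either a plain mass pairing $(\,\cdot\,,\mathbf{1})$ or a discrete Laplacian pairing $(\delta_x^2\,\cdot\,,\mathbf{1})$.

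First I would dispose of all the Laplacian pairings. For any mesh function $w$, the sum $(\delta_x^2 w,\mathbf{1}) = h\sum_{i=1}^{M-1}\delta_x^2 w_i$ telescopes to a pure discrete boundary flux, so under the no-flux boundary conditions of the model it vanishes. Applying this to $w=f(u^n)$ and $w=v^n$ annihilates the whole right-hand side of the summed equation, and applying it to $w=\sum_{k=1}^n B_{n-k}^{(n)}\triangledown_\tau u^k$ removes the $\tfrac{h^2}{12}\delta_x^2$ correction produced by $\mathscr{A}$ on the left-hand side. What survives, after interchanging the finite sum over $k$ with the spatial summation, is the scalar memory identity $\sum_{k=1}^{n} B_{n-k}^{(n)}\,\triangledown_\tau\mathcal{M}^k = 0$, where $\triangledown_\tau\mathcal{M}^k := \mathcal{M}^k-\mathcal{M}^{k-1}$.

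I would then close the argument by induction on $n$. For $n=1$ the identity reduces to $B_0^{(1)}\triangledown_\tau\mathcal{M}^1 = 0$, and since $B_0^{(1)} = c_0^{(1)} > 0$ this forces $\mathcal{M}^1 = \mathcal{M}^0$. Assuming $\triangledown_\tau\mathcal{M}^k = 0$ for every $k\le n-1$, all terms with $k<n$ drop out of the memory sum, leaving $B_0^{(n)}\triangledown_\tau\mathcal{M}^n = 0$; the positivity $B_0^{(n)} \ge \tfrac{2-\alpha+2\rho_n}{(1+\rho_n)\Gamma(3-\alpha)\tau_n^\alpha} > 0$ already recorded in the proof of Theorem \ref{Th.4.2} then yields $\mathcal{M}^n = \mathcal{M}^{n-1}$, which is exactly the asserted conservation.

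The routine part is the induction together with the positivity of the diagonal kernel; the step I expect to be delicate is verifying that the telescoped boundary fluxes genuinely cancel. Concretely, one must check that the discrete no-flux conditions inherited from (\ref{eq.1.1}) make $(\delta_x^2 f(u^n),\mathbf{1})$, $(\delta_x^2 v^n,\mathbf{1})$, and the boundary flux of $\sum_{k=1}^n B_{n-k}^{(n)}\triangledown_\tau u^k$ all vanish simultaneously; once this boundary bookkeeping is settled, the reduction to the scalar identity and the conservation conclusion follow immediately.
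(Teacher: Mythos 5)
Your proposal is sound in its core and shares the same engine as the paper's proof --- the scalar memory identity $\sum_{k=1}^{n} B_{n-k}^{(n)}\,\triangledown_\tau\mathcal{M}^k = 0$ followed by induction on $n$ using the positivity of the diagonal kernel $B_0^{(n)}$ --- but you reach that identity by a genuinely different and, frankly, more faithful route. The paper never touches the spatial discretization: it integrates the continuous equation over $\Omega$, invokes Green's formula to kill the right-hand side, and then ``discretizes this conservation property'' directly in time, so what it actually proves is a semi-discrete statement about $\int_\Omega u(x,t_n)\,\mathrm{d}x$ rather than a property of the fully discrete scheme (\ref{eq.3.5}). You instead pair the fully discrete scheme with the constant mesh function, split $\mathscr{A}u_i = u_i + \tfrac{h^2}{12}\delta_x^2 u_i$, and reduce everything to telescoped boundary fluxes of the form $\tfrac{1}{h}\bigl[(w_M-w_{M-1})-(w_1-w_0)\bigr]$; this yields conservation of the computable quantity $h\sum_i u_i^n$, which is what one actually wants from a ``discrete volume conservation'' theorem. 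What your approach buys is a statement about the numbers the code produces; what the paper's approach buys is brevity, at the cost of leaving the spatial side entirely unexamined. The one caveat --- which you correctly flag as the delicate step --- is the boundary bookkeeping: the paper's analysis is set in $\mathring{\mathcal{U}}_h$ with $u_0=u_M=0$ (homogeneous Dirichlet), under which your discrete fluxes do \emph{not} vanish identically, whereas the paper's own Green's-formula step tacitly assumes no-flux conditions. This inconsistency is inherited from the paper rather than introduced by you, but to make your argument airtight you would need to state explicitly the discrete boundary closure (e.g.\ discrete homogeneous Neumann conditions for $u$, $v$, and $f(u)$) under which all three flux terms cancel; once that is fixed, your reduction and induction are complete and correct.
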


\begin{proof}
The proof proceeds through several steps combining continuum analysis with discrete induction. First, consider the continuous equation (\ref{eq.1.1}). Integration over the domain $\Omega$ followed by application of Green's formula yields
\begin{align*}
\int_\Omega \partial_t^\alpha u(x,t) \;\mathrm{d}x
= \kappa \int_\Omega \Delta f(u) +
\varepsilon^2 \Delta v(x,t)\; \mathrm{d}x = 0,
\end{align*}
where the vanishing right-hand side results from boundary conditions.

Discretizing this conservation property using the scheme (\ref{eq.2.5}) gives
\begin{align*}
\int_\Omega \sum\limits_{k=1}^{n}B_{n-k}^{(n)} \left(u(x,t_k)-u(x,t_{k-1}) \right)\mathrm{d}x = 0
\end{align*}
for each $1\leq n\leq N$. The conservation property can be rigorously established through mathematical induction.

For the base case $n=1$, the temporal discretization scheme (\ref{eq.2.3}) combined with (\ref{eq.2.5}) produces
\begin{align*}
\int_\Omega c_{0}^{(1)} \left(u(x,t_1)-u(x,t_{0}) \right)\mathrm{d}x = 0,
\end{align*}
immediately implying $\int_\Omega u(x,t_1)\mathrm{d}x = \int_\Omega u(x,t_{0}) \mathrm{d}x$.

Assuming the conservation property holds for all time steps up to $N-1$, i.e.,
\begin{align*}
\int_\Omega u(x,t_n)\mathrm{d}x = \int_\Omega u(x,t_{n-1})\mathrm{d}x \quad \text{for} \; 1\leq n\leq N-1,
\end{align*}
we examine the $N$-th time step. The discrete equation becomes
\begin{align*}
\int_\Omega \sum_{k=1}^{N} B_{N-k}^{(N)} \left(u(x,t_k)-u(x,t_{k-1}) \right)\mathrm{d}x = \int_\Omega B_{0}^{(N)} \left(u(x,t_N)-u(x,t_{N-1}) \right)\mathrm{d}x = 0,
\end{align*}
where the simplification follows from the induction hypothesis. This directly yields
\begin{align*}
\int_\Omega u(x,t_N) \mathrm{d}x = \int_\Omega u(x,t_{N-1})\mathrm{d}x,
\end{align*}
completing the inductive step and the proof.
\end{proof}

\subsection{Energy dissipation law}\label{sec: Energy Dissipation Law}
The energy dissipation law of the numerical scheme are established through careful analysis of the operator $\mathcal{H} = \mathscr{A}^{-1}\delta_x^2$. This operator plays a fundamental role in the discrete energy analysis and possesses the following essential properties:
\begin{lemma}\label{Le.4.4}
For the operator $\mathcal{H} = \mathscr{A}^{-1}\delta_x^2$ acting on the space $\mathring{\mathscr{U}}_h$, the following properties hold:
\begin{itemize}
\item[(1)] The operator $\mathcal{H}$ is symmetric, satisfying $(\mathcal{H} u, v) = (u, \mathcal{H} v)$ for all $u, v \in \mathring{\mathscr{U}}_h$.
\item[(2)] The operator $\mathcal{H}$ is negative semidefinite, with $(\mathcal{H} u, u) \leq 0$ for any $u\in \mathring{\mathcal{U}}_h$.
\end{itemize}
\end{lemma}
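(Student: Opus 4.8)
The plan is to prove the two stated properties of the operator $\mathcal{H} = \mathscr{A}^{-1}\delta_x^2$ directly from the definitions of the averaging operator $\mathscr{A}$, the second-difference operator $\delta_x^2$, and the discrete inner product $(\cdot,\cdot)$ on $\mathring{\mathcal{U}}_h$. The key observation that will drive both parts is that $\mathscr{A}$ and $\delta_x^2$ are each symmetric operators on $\mathring{\mathcal{U}}_h$ with respect to $(\cdot,\cdot)$, that $\mathscr{A}$ is symmetric positive definite (hence invertible with symmetric inverse), and that they commute. Indeed, from the tridiagonal definitions $\mathscr{A}u = \tfrac{1}{12}(u_{i-1}+10u_i+u_{i+1})$ and $\delta_x^2 u = h^{-2}(u_{i-1}-2u_i+u_{i+1})$, both acting with homogeneous Dirichlet boundary values $u_0=u_M=0$, the associated matrices are symmetric and Toeplitz-tridiagonal, so they share a common basis of eigenvectors and therefore commute: $\mathscr{A}\,\delta_x^2 = \delta_x^2\,\mathscr{A}$.

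For part (1), I would first record that $\mathscr{A}$ is symmetric positive definite. Symmetry of the tridiagonal matrix is immediate, and positivity follows since $(\mathscr{A}u,u)=\|u\|^2-\tfrac{h^2}{12}\|\delta_x u\|^2$ can be shown positive using $\|\delta_x u\|^2 \le \tfrac{4}{h^2}\|u\|^2$; alternatively, Lemma~\ref{Le.4.1} already gives $\tfrac13\|u\|^2 \le \|\mathscr{A}u\|^2$, confirming $\mathscr{A}$ is invertible. Consequently $\mathscr{A}^{-1}$ exists and is itself symmetric. Then, using summation by parts twice (with the homogeneous boundary conditions eliminating boundary terms), $\delta_x^2$ is symmetric: $(\delta_x^2 u, v) = -(\delta_x u,\delta_x v) = (u,\delta_x^2 v)$. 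Combining these with commutativity, I compute
\begin{align*}
(\mathcal{H}u,v) = (\mathscr{A}^{-1}\delta_x^2 u, v) = (\delta_x^2 u, \mathscr{A}^{-1}v) = (u, \delta_x^2\mathscr{A}^{-1}v) = (u, \mathscr{A}^{-1}\delta_x^2 v) = (u,\mathcal{H}v),
\end{align*}
where the second and fourth equalities use symmetry of $\mathscr{A}^{-1}$, the third uses symmetry of $\delta_x^2$, and the pivotal step is the interchange $\delta_x^2 \mathscr{A}^{-1} = \mathscr{A}^{-1}\delta_x^2$, valid because $\mathscr{A}$ and $\delta_x^2$ commute.

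For part (2), negative semidefiniteness, I would write $(\mathcal{H}u,u) = (\mathscr{A}^{-1}\delta_x^2 u, u)$ and set $w := \mathscr{A}^{-1}u$, so that $u = \mathscr{A}w$ and $(\mathcal{H}u,u) = (\mathscr{A}^{-1}\delta_x^2\mathscr{A}w,\mathscr{A}w) = (\delta_x^2 w, \mathscr{A}w)$ using commutativity and symmetry of $\mathscr{A}^{-1}$. Then, since $\mathscr{A}$ and $\delta_x^2$ commute and $\delta_x^2$ is symmetric, this equals $(\mathscr{A}\delta_x^2 w, w)$, which I would recognize as $-\|\delta_x w\|_{\mathcal{A}}^2 \le 0$ by the definition $(w,w)_{1,\mathcal{A}} = (\mathscr{A}w,-\delta_x^2 w)$ and the established inner-product identities. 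The main obstacle, and the step deserving the most care, is rigorously justifying the commutativity $\mathscr{A}\delta_x^2 = \delta_x^2\mathscr{A}$ on $\mathring{\mathcal{U}}_h$ while correctly tracking the boundary rows where the definition of $\mathscr{A}$ changes at $i=0,M$; this is handled cleanly by restricting to $\mathring{\mathcal{U}}_h$ where $u_0=u_M=0$, so that both operators reduce to their interior Toeplitz forms and the boundary contributions in all summation-by-parts identities vanish.
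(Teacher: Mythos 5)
Your proof is correct and follows essentially the same route as the paper: symmetry of $\mathscr{A}^{-1}$ and of $\delta_x^2$ combined with their commutativity for part (1), and the substitution $w=\mathscr{A}^{-1}u$ together with the identity $(\delta_x^2 w,\mathscr{A}w)=-\|\delta_x w\|_{\mathcal{A}}^2$ for part (2). The only real difference is that you explicitly justify the commutativity $\mathscr{A}\,\delta_x^2=\delta_x^2\,\mathscr{A}$ on $\mathring{\mathcal{U}}_h$, which the paper's proof uses silently; this is a worthwhile addition, and it follows even more directly from the interior identity $\mathscr{A}=I+\tfrac{h^2}{12}\delta_x^2$ than from the common-eigenbasis argument.
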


\begin{proof}
According to the operator definition and inner product property, we derive:
\begin{align*}
\left(\mathcal{H} u, v\right) = \left(\mathcal{A}^{-1}\delta_x^2 u, v\right)
= \left(\delta_x^2 u, \mathcal{A}^{-1} v\right)
= \left(u, \delta_x^2 \left(\mathcal{A}^{-1} v\right)\right)
= \left(u, \mathcal{H} v\right).
\end{align*}

For the negative semidefinite property (2), Lemma \ref{Le.4.1} provides the necessary framework for the calculation:
\begin{align*}
	\left(\mathcal{H} u, u\right) = \left(\mathcal{A}^{-1}\delta_x^2 u, u\right) = \left(\delta_x^2 \mathcal{A}^{-1}u, u\right)
= -\left(\delta_x^2 \left(\mathcal{A}^{-1} u\right),
-\mathcal{A} \left(\mathcal{A}^{-1} u\right)\right)
= -\left\|\delta_x \left(\mathcal{A}^{-1} u\right)\right\|_\mathcal{A}^2 \leq 0.
\end{align*}
This completes the proof of both properties.
\end{proof}

Building upon equations (\ref{eq.2.7}), (\ref{eq.3.2}), and Lemma \ref{Le.4.4}, we derive the following discrete evolution equation:
\begin{equation}\label{eq.4.1}
\begin{aligned}
&\left( \theta c_{0}^{(n)}+ \frac{\rho_{n}}{1+\rho_{n}}d_{0}^{(n)} \right) \triangledown_\tau u^{n}
-\frac{\rho_{n}^2}{1+\rho_{n}}d_{0}^{(n)} \triangledown_\tau u^{n-1} + \sum\limits_{k=1}^{n} \widetilde{c}_{n-k}^{(n)}\triangledown_\tau u^{k} \\
= &\kappa \mathcal{H} \left(u^{n} \right)^{\circ 3}-\kappa \mathcal{H} u^{n} - \kappa \varepsilon^2 \mathcal{H}\left(\mathcal{H} u^n\right).
\end{aligned}
\end{equation}

To properly analyze the energy dissipation, we introduce an $\mathcal{H}^{-1}$-like inner product structure. For arbitrary mesh functions $u,v \in \mathring{\mathcal{U}}_h$, define
\begin{align*}
(u,v)_{-\mathcal{H}} := \left(u,\left(-\mathcal{H}\right)^{-1} v \right)
\end{align*}
with the induced norm $\left\|u\right\|_{-\mathcal{H}} :=
\sqrt{\left(u,u\right)_{-\mathcal{H}}}$. This structure leads to the definition of a modified discrete energy functional:
\begin{align}\label{eq.4.2}
\mathcal{E}^n = E^n + \frac{1}{\kappa}\left(\mathcal{G}\left
[\triangledown_\tau u^n\right],1\right)_{-\mathcal{H}} \quad \text{for} \; n \ge 1,
\end{align}
where $\mathcal{G}$ represents the non-negative functional from Theorem \ref{Th.2.6}. The discrete free energy $E^n$ corresponds to the discretization of the continuous energy functional (\ref{eq.1.6}) and is given by
\begin{align*}
E^n := \frac{\varepsilon^2}{2} \left\| \nabla_h u^{n} \right\|^2
 + \frac{1}{4} \left\| \left(u^{n}\right)^{\circ 2}-1 \right\|^2,
\end{align*}
with $\nabla_h$ denoting a discrete gradient operator satisfying $(\mathcal{H} u,v) = -(\nabla_h u,\nabla_h v)$.

The following theorem establishes the energy dissipation law of the numerical scheme under appropriate time step constraints:

\begin{theorem}\label{Th.4.5}
Under the conditions of Theorem \ref{Th.4.2} with the time step restriction
\begin{align*}
\tau_n \leq \sqrt[\alpha]{\frac{4\epsilon^2 q(\rho_n,\rho_{n+1},\alpha)}{\kappa\Gamma(3-\alpha)}} \quad \text{for} \quad 2 \leq n \leq N,
\end{align*}
the modified energy satisfies the dissipation property
\begin{align*}
\mathcal{E}^n \leq \mathcal{E}^{n-1}, \quad n=2,3,\cdots,N.
\end{align*}
\end{theorem}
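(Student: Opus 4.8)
The plan is to test the discrete evolution equation (\ref{eq.4.1}) against $\triangledown_\tau u^n$ in the $(-\mathcal{H})^{-1}$ inner product, that is, to form $(\,\cdot\,,\triangledown_\tau u^n)_{-\mathcal{H}}$ on both sides. This pairing is dictated by the right-hand side of (\ref{eq.4.1}), which carries a factor $\mathcal{H}$: writing the right-hand side as $\kappa\mathcal{H}\mu^n$ with the discrete chemical potential $\mu^n := (u^n)^{\circ 3}-u^n-\varepsilon^2\mathcal{H}u^n = f(u^n)+\varepsilon^2 v^n$, and pairing it with $(-\mathcal{H})^{-1}\triangledown_\tau u^n$, the symmetry of $\mathcal{H}$ (Lemma \ref{Le.4.4}) collapses $\mathcal{H}(-\mathcal{H})^{-1}$ to $-\mathrm{Id}$ and turns the right-hand side into $-\kappa(\mu^n,\triangledown_\tau u^n)$. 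Denoting by $\mathcal{L}^n$ the temporal operator on the left of (\ref{eq.2.7}), the identity to be exploited is thus $(\mathcal{L}^n,\triangledown_\tau u^n)_{-\mathcal{H}} = -\kappa(\mu^n,\triangledown_\tau u^n)$.

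For the temporal side I would upgrade Theorem \ref{Th.2.6} from scalars to the $(-\mathcal{H})^{-1}$ inner product. Since $-\mathcal{H}$ is symmetric positive definite on $\mathring{\mathcal{U}}_h$, it has an orthonormal eigenbasis $\{\psi_m\}$ with eigenvalues $\lambda_m>0$; expanding $u^k=\sum_m \hat u_m^k\psi_m$ and noting that the kernels $\theta c_0^{(n)},d_0^{(n)},\widetilde c_{n-k}^{(n)},J_{n-k}^{(n)}$ act identically in each coefficient, one applies the scalar estimate of Theorem \ref{Th.2.6} to each sequence $\{\hat u_m^k\}_k$, multiplies by $1/\lambda_m>0$, and sums. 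This yields
\[
(\mathcal{L}^n,\triangledown_\tau u^n)_{-\mathcal{H}} \ge \big(\mathcal{G}[\triangledown_\tau u^n],1\big)_{-\mathcal{H}} - \big(\mathcal{G}[\triangledown_\tau u^{n-1}],1\big)_{-\mathcal{H}} + \frac{q(\rho_n,\rho_{n+1},\alpha)}{2\tau_n^\alpha\Gamma(3-\alpha)}\,\|\triangledown_\tau u^n\|_{-\mathcal{H}}^2,
\]
where the sums over eigenmodes reproduce exactly the $(-\mathcal{H})^{-1}$-weighted functional $(\mathcal{G}[\cdot],1)_{-\mathcal{H}}$ of (\ref{eq.4.2}), and positivity of $q$ on $1\le\rho_n,\rho_{n+1}\le\rho^*(\alpha)$ is guaranteed by Lemma \ref{Le.2.2}.

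For the spatial side I would rewrite $\kappa(\mu^n,\triangledown_\tau u^n)$ as a free-energy increment. The gradient part $\varepsilon^2(v^n,\triangledown_\tau u^n)=\varepsilon^2(-\mathcal{H}u^n,\triangledown_\tau u^n)$ is handled by the symmetric identity $2(-\mathcal{H}u^n,\triangledown_\tau u^n)=\|\nabla_h u^n\|^2-\|\nabla_h u^{n-1}\|^2+\|\nabla_h\triangledown_\tau u^n\|^2$, producing the gradient part of $E^n-E^{n-1}$ plus the favorable term $\tfrac{\varepsilon^2}{2}\|\nabla_h\triangledown_\tau u^n\|^2$. The nonlinear part $(f(u^n),\triangledown_\tau u^n)$ is expanded by the pointwise Taylor identity $F'(u^n)\triangledown_\tau u^n = F(u^n)-F(u^{n-1})+\tfrac12 F''(\xi)(\triangledown_\tau u^n)^2$ with $F(u)=\tfrac14(u^2-1)^2$; summing over grid points gives the double-well part of $E^n-E^{n-1}$ plus a remainder which, because $F''(\xi)=3\xi^2-1\ge -1$, obeys $-\tfrac{h}{2}\sum_i F''(\xi_i)(\triangledown_\tau u_i^n)^2 \le \tfrac12\|\triangledown_\tau u^n\|^2$. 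Combining the two sides and recalling $\mathcal{E}^n=E^n+\tfrac1\kappa(\mathcal{G}[\triangledown_\tau u^n],1)_{-\mathcal{H}}$ reduces the claim to
\[
\mathcal{E}^n-\mathcal{E}^{n-1} \le \tfrac12\|\triangledown_\tau u^n\|^2 - \tfrac{\varepsilon^2}{2}\|\nabla_h\triangledown_\tau u^n\|^2 - \frac{q(\rho_n,\rho_{n+1},\alpha)}{2\kappa\tau_n^\alpha\Gamma(3-\alpha)}\|\triangledown_\tau u^n\|_{-\mathcal{H}}^2.
\]

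The main obstacle is proving the right-hand side is nonpositive, since the leftover $\tfrac12\|\triangledown_\tau u^n\|^2$ from the possibly concave nonlinearity and the two dissipation terms live in three different norms. The key is the interpolation inequality $\|w\|^2 \le \|\nabla_h w\|\,\|w\|_{-\mathcal{H}}$, which follows from Cauchy--Schwarz applied to $(-\mathcal{H})^{1/2}$ and $(-\mathcal{H})^{-1/2}$, combined with Young's inequality at weight $2\varepsilon^2$ to obtain $\tfrac12\|\triangledown_\tau u^n\|^2 \le \tfrac{\varepsilon^2}{2}\|\nabla_h\triangledown_\tau u^n\|^2 + \tfrac{1}{8\varepsilon^2}\|\triangledown_\tau u^n\|_{-\mathcal{H}}^2$. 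The gradient dissipation then cancels exactly, leaving $\big(\tfrac{1}{8\varepsilon^2}-\tfrac{q(\rho_n,\rho_{n+1},\alpha)}{2\kappa\tau_n^\alpha\Gamma(3-\alpha)}\big)\|\triangledown_\tau u^n\|_{-\mathcal{H}}^2$, which is $\le 0$ precisely when $\tau_n^\alpha \le 4\varepsilon^2 q(\rho_n,\rho_{n+1},\alpha)/(\kappa\Gamma(3-\alpha))$, i.e.\ under the stated time-step restriction. I expect the delicate points to be the rigorous modewise vectorization of Theorem \ref{Th.2.6} and the clean matching of the gradient dissipation with the Young term so that the nonlinear remainder is fully absorbed.
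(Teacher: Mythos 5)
Your proposal is correct and follows essentially the same route as the paper: test (\ref{eq.4.1}) against $(-\mathcal{H})^{-1}\triangledown_\tau u^n$, apply Theorem \ref{Th.2.6} to the temporal operator, convert the chemical-potential pairing into the free-energy increment (your Taylor identity with $F''\ge -1$ is exactly the paper's polynomial inequality), and absorb the leftover $\tfrac12\|\triangledown_\tau u^n\|^2$ via the interpolation/Young estimate $\tfrac12\|w\|^2\le\tfrac{\varepsilon^2}{2}\|\nabla_h w\|^2+\tfrac{1}{8\varepsilon^2}\|w\|_{-\mathcal{H}}^2$, yielding the same final coefficient condition. The steps you flag as delicate (the modewise vectorization of Theorem \ref{Th.2.6} and the derivation of the interpolation inequality) are in fact glossed over by the paper, and your elaborations of them are sound.
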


\begin{proof}
First, taking the inner product of equation (\ref{eq.4.1}) with $(-\mathcal{H})^{-1}\triangledown_\tau u^n/\kappa$ yields
\begin{equation}\label{eq.4.3}
\begin{aligned}
&\frac{1}{\kappa} \Bigg( \left( \theta c_{0}^{(n)}+ \frac{\rho_{n}}{1+\rho_{n}}d_{0}^{(n)} \right) \triangledown_\tau u^{n} - \frac{\rho_{n}^2}{1+\rho_{n}}d_{0}^{(n)} \triangledown_\tau u^{n-1}
+ \sum\limits_{k=1}^{n} \widetilde{c}_{n-k}^{(n)} \triangledown_\tau u^{k}, \triangledown_\tau u^n \Bigg)_{-\mathcal{H}} \\
= &\left(u^{n}- \left(u^{n} \right)^{\circ 3}, \triangledown_\tau u^n \right) + \varepsilon^2 \left(\mathcal{H} u^n, \triangledown_\tau u^n \right).
\end{aligned}
\end{equation}

	Next, let's analyze the two sides of equation (\ref{eq.4.3}). Firstly, for its left side, Theorem \ref{Th.2.6} yields
	\begin{equation}\label{eq.4.4}
		\begin{aligned}\displaystyle
			&\frac{1}{\kappa} \Bigg{(} \left( \theta c_{0}^{(n)}+  \frac{\rho_{n}}{1+\rho_{n}}d_{0}^{(n)} \right) \triangledown_\tau u^{n}-\frac{\rho_{n}^2}{1+\rho_{n}}d_{0}^{(n)} \triangledown_\tau u^{n-1}  +\sum\limits_{k=1}^{n} \widetilde{c}_{n-k}^{(n)} \triangledown_\tau u^{k},\triangledown_\tau u^n \Bigg{)}_{-\mathcal{H}} \\
		\ge	&\frac{1}{\kappa}\left(\mathcal{G}
\left[\triangledown_\tau u^n\right],1\right)_{-\mathcal{H}} -\frac{1}{\kappa}\left(\mathcal{G}\left[\triangledown_\tau
 u^{n-1}\right],1\right)_{-H} +\frac{q\left(\rho_n,\rho_{n+1},\alpha\right)}{2\kappa \tau_{n}^\alpha \Gamma(3-\alpha)}\left\|\triangledown_\tau u^{n}\right\|_{-\mathcal{H}}^2.
		\end{aligned}
	\end{equation}

For the right-hand side terms, we employ the polynomial inequality
\begin{align*}
\left(\widetilde{a}-\widetilde{a}^3\right)\left(\widetilde{a}-\widetilde{b}\right) \leq \frac{1}{2}(\widetilde{a}-\widetilde{b})^2 - \frac{1}{4}\left[ \left(\widetilde{a}^2-1\right)^2-\left(\widetilde{b}^2-1\right)^2\right],
\end{align*}
which leads to
\begin{align*}
	\left(u^{n}- \left(u^{n} \right)^{\circ 3}, \triangledown_\tau u^n \right) \leq \frac{1}{2}\|\triangledown_\tau u^n\|^2 - \frac{1}{4} \left\| \left(u^{n}\right)^{\circ 2}-1 \right\|^2 + \frac{1}{4} \left\| \left(u^{n-1}\right)^{\circ 2}-1 \right\|^2.
\end{align*}

The second term on the right-hand side of equation (\ref{eq.4.3})
can be expressed using the identity
\begin{align*}
	\left(\mathcal{H} u^n, \triangledown_\tau u^n \right) &= -\frac{1}{2}\left( \nabla_h (\triangledown_\tau u^n + u^n + u^{n-1}), \nabla_h \triangledown_\tau u^n \right) \\
	&= \frac{1}{2} \left\| \nabla_h u^{n-1} \right\|^2 - \frac{1}{2} \left\| \nabla_h u^{n} \right\|^2 - \frac{1}{2} \left\| \nabla_h \triangledown_\tau u^n \right\|^2.
\end{align*}

Combining these estimates yields the bound for the right-hand side of equation (\ref{eq.4.3}) in the form
\begin{equation}\label{eq.4.5}
	\begin{aligned}
		&\left(u^{n}- \left(u^{n} \right)^{\circ 3}, \triangledown_\tau u^n \right) + \varepsilon^2 \left(\mathcal{H} u^n, \triangledown_\tau u^n \right) \\
		\leq &\frac{1}{2}\|\triangledown_\tau u^n\|^2 - \frac{1}{4} \left\| \left(u^{n}\right)^{\circ 2}-1 \right\|^2 + \frac{1}{4} \left\| \left(u^{n-1}\right)^{\circ 2}-1 \right\|^2 \\
		&- \frac{\varepsilon^2}{2} \left\| \nabla_h u^{n} \right\|^2 + \frac{\varepsilon^2}{2} \left\| \nabla_h u^{n-1} \right\|^2 - \frac{\varepsilon^2}{2} \left\| \nabla_h \triangledown_\tau u^n \right\|^2.
	\end{aligned}
\end{equation}
Applying the generalized Hölder inequality
\begin{align*}
	\|\triangledown_\tau u^n\|^2 \leq \varepsilon^2 \left\| \nabla_h \triangledown_\tau u^n \right\|^2 + \frac{1}{4\varepsilon^2}\|\triangledown_\tau u^{n} \|_{-\mathcal{H}}^2,
\end{align*}
and combining with (\ref{eq.4.2})-(\ref{eq.4.5}), we obtain the key estimate:
		\begin{align*}
			\mathcal{E}^n- \mathcal{E}^{n-1}
			\leq \left(\frac{1}{8\varepsilon^2}- \frac{q(\rho_n,\rho_{n+1},\alpha) }{2\kappa \tau_{n}^\alpha \Gamma(3-\alpha)}\right) \|\triangledown_\tau u^{n} \|_{-\mathcal{H}}^2 .
		\end{align*}
The desired result $\mathcal{E}^n \leq \mathcal{E}^{n-1}$ follows when the coefficient satisfies
\begin{align*}
	\frac{1}{8\varepsilon^2} - \frac{q(\rho_n,\rho_{n+1},\alpha)}{2\kappa \tau_{n}^\alpha \Gamma(3-\alpha)} \leq 0,
\end{align*}
which completes the proof.
\end{proof}

\begin{remark}\label{Re.2}
For the initial time step $n=1$, a similar analysis shows that under the condition $\tau_1 \leq \sqrt[\alpha]{\frac{8\epsilon^2}{\kappa\Gamma(2-\alpha)}}$, the energy satisfies $E^1 \leq E^0$.
As the fractional index $\alpha \to 1^-$, the discrete kernels $J_{n-k}^{(n)} \to 0$ for $1 \leq k \leq n$ according to Theorem \ref{Th.2.5} and equation (\ref{eq.2.8}). This asymptotic behavior demonstrates that the modified energy $\mathcal{E}^n$ converges to the original Ginzburg-Landau energy:
\begin{align*}
\mathcal{E}^n \to E^n \quad \text{as} \quad \alpha \to 1^-,
\end{align*}
showing the consistency of our discrete formulation with the classical case.
\end{remark}

\subsection{Convergence analysis}\label{sec: Convergence analysis}

This section establishes the convergence properties of the proposed numerical scheme through careful error analysis. We begin by introducing a modified Gr\"{o}nwall inequality that will be instrumental in our analysis.

\begin{lemma}\label{Le.4.6}
Let $\widetilde{C}>0$, and let $\left\{y_n\right\}$, $\left\{g_n\right\}$,
 and $\left\{\phi_n\right\}$ be non-negative sequences satisfying
\begin{align*}
\phi_{0}\leq \widetilde{C},\quad \phi_{n}\leq \widetilde{C}+\sum\limits_{j=0}^{n-1}y_{k}+\sum\limits_{j=0}^{n-1}g_{k}\phi_{k}, \quad n\geq 1.
\end{align*}
Then the following inequality holds:
\begin{align*}
\phi_n \leq \left(\widetilde{C}+\sum\limits_{j=0}^{n-1}y_{k} \right) \exp\left(\sum_{k=0}^{n-1}g_k\right), \quad n\geq 1.
\end{align*}
\end{lemma}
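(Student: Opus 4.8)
The plan is to reduce Lemma \ref{Le.4.6} to the classical discrete Grönwall argument by first isolating the non-decreasing part of the right-hand side. Writing $P_n := \widetilde{C} + \sum_{k=0}^{n-1} y_k$, the hypotheses read $\phi_n \leq P_n + \sum_{k=0}^{n-1} g_k \phi_k$, and since every $y_k \geq 0$ the majorant $\{P_n\}$ is non-decreasing with $P_0 = \widetilde{C} \geq \phi_0$. My strategy is \emph{not} to induct directly on the claimed exponential bound (the factor $\exp(\sum g_k)$ does not interact cleanly with a single summation step), but rather to establish first the sharper product-form estimate
\begin{equation*}
\phi_n \leq P_n \prod_{k=0}^{n-1}(1+g_k), \qquad n \geq 0,
\end{equation*}
and only at the very end to relax the product to the exponential via $1+x \leq e^x$.

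First I would set up the induction. The base case $n=0$ is immediate from $\phi_0 \leq \widetilde{C} = P_0$ together with the empty-product convention $\prod_{k=0}^{-1}(1+g_k)=1$. For the inductive step, assuming the product bound for all indices up to $n-1$, I would substitute it into the defining inequality and use the monotonicity $P_k \leq P_n$ to pull $P_n$ outside the sum, obtaining
\begin{equation*}
\phi_n \leq P_n\Bigg(1 + \sum_{k=0}^{n-1} g_k \prod_{j=0}^{k-1}(1+g_j)\Bigg).
\end{equation*}

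The decisive algebraic step---and essentially the only place requiring genuine care---is the telescoping identity $g_k \prod_{j=0}^{k-1}(1+g_j) = \prod_{j=0}^{k}(1+g_j) - \prod_{j=0}^{k-1}(1+g_j)$, which follows by factoring $\prod_{j=0}^{k}(1+g_j) = (1+g_k)\prod_{j=0}^{k-1}(1+g_j)$. Summing this over $0 \leq k \leq n-1$ collapses the sum to $\prod_{k=0}^{n-1}(1+g_k) - 1$, so the bracketed factor above is exactly $\prod_{k=0}^{n-1}(1+g_k)$, closing the induction. Finally, applying $1+g_k \leq e^{g_k}$ term by term gives $\prod_{k=0}^{n-1}(1+g_k) \leq \exp\!\big(\sum_{k=0}^{n-1} g_k\big)$, and substituting back the definition of $P_n$ yields the stated bound. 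I expect no serious obstacle here: the argument is entirely elementary, and the only real point is to recognize that proving the product form first---and invoking the monotonicity of $P_n$ to extract the common factor---is precisely what makes a single telescoping identity suffice.
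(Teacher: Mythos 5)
Your argument is correct and complete: the reduction to the sharper product bound $\phi_n \leq P_n \prod_{k=0}^{n-1}(1+g_k)$ via strong induction, the use of the monotonicity of $P_n$ to extract the common factor, the telescoping identity, and the final relaxation $1+g_k \leq e^{g_k}$ are all sound, and the non-negativity hypotheses are used exactly where needed. Note that the paper itself states Lemma \ref{Le.4.6} without proof, treating it as the standard discrete Gr\"onwall lemma (cf.\ the Holte reference in the bibliography), so there is no in-paper argument to compare against; your proof is the classical one and would serve as a valid self-contained justification of the lemma as stated (modulo the obvious index typos $j$ versus $k$ in the paper's summations, which you silently and correctly repaired).
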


Let $l^2(\mathbb{Z})$ denote the space of square-summable bilateral sequences, i.e., sequences satisfying $\sum_{i=-\infty}^{\infty}|u_i|^2<\infty$. We establish the following property of the operator $\mathcal{A}^{-1}$:

\begin{lemma}\label{Le.4.7}
The inverse operator $\mathcal{A}^{-1}$ acting on $l^2(\mathbb{Z})$ has operator norm $\frac{3}{2}$, that is, $\left\|\mathcal{A}^{-1}\right\|=\frac{3}{2}$.
\end{lemma}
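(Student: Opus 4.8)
The plan is to diagonalize $\mathscr{A}$ by the discrete Fourier transform, since on the bilateral space $l^2(\mathbb{Z})$ the averaging operator is a genuine convolution rather than a finite matrix. First I would observe that on $\mathbb{Z}$ the boundary cases in the definition of $\mathscr{A}$ play no role, so $\mathscr{A}$ acts uniformly as $(\mathscr{A}u)_k=\frac{1}{12}(u_{k-1}+10u_k+u_{k+1})$, i.e.\ as convolution against the kernel $\frac{1}{12}(1,10,1)$.

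Next I would pass to the frequency side via the (after normalization) unitary map $\mathcal{F}\colon l^2(\mathbb{Z})\to L^2([-\pi,\pi])$, $\hat u(\xi)=\sum_{k\in\mathbb{Z}}u_k e^{-\mathrm{i}k\xi}$. Under $\mathcal{F}$ convolution becomes multiplication, so $\mathscr{A}$ is carried to multiplication by its symbol
\begin{align*}
m(\xi)=\frac{1}{12}\left(e^{-\mathrm{i}\xi}+10+e^{\mathrm{i}\xi}\right)=\frac{5+\cos\xi}{6}.
\end{align*}
Since $\cos\xi$ ranges over $[-1,1]$, the symbol satisfies $m(\xi)\in[\tfrac{2}{3},1]$; in particular $m(\xi)\ge\tfrac{2}{3}>0$ is bounded away from zero, which shows that $\mathscr{A}$ is boundedly invertible and that $\mathcal{A}^{-1}$ corresponds to multiplication by $1/m(\xi)$.

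Finally I would invoke the fact that the operator norm of a multiplication operator on $L^2$ equals the essential supremum of the multiplier. Because $\mathcal{F}$ is an isometry and hence preserves operator norms,
\begin{align*}
\left\|\mathcal{A}^{-1}\right\|=\sup_{\xi\in[-\pi,\pi]}\frac{1}{m(\xi)}=\frac{6}{5+\min_\xi\cos\xi}=\frac{6}{4}=\frac{3}{2},
\end{align*}
the supremum being attained at $\xi=\pi$. The only point requiring genuine care is the justification that this diagonalization is \emph{exact} on all of $l^2(\mathbb{Z})$; this is precisely why the statement is phrased on bilateral sequences, so that $\mathscr{A}$ is a true Laurent/convolution operator unitarily equivalent to multiplication by $m$. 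Equivalently, one may argue that $\mathscr{A}$ is self-adjoint and positive with spectrum equal to the range $[\tfrac{2}{3},1]$ of its symbol, whence $\|\mathcal{A}^{-1}\|=1/\min m=\tfrac{3}{2}$.
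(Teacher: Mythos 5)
Your proposal is correct and follows essentially the same route as the paper: both represent $\mathscr{A}$ on $l^2(\mathbb{Z})$ as convolution with the kernel $\tfrac{1}{12}(1,10,1)$, pass to the Fourier symbol $m(\theta)=\tfrac{5+\cos\theta}{6}\in[\tfrac23,1]$, and conclude $\left\|\mathcal{A}^{-1}\right\|=\sup_\theta 1/m(\theta)=\tfrac32$. Your added remarks on unitarity of the transform and attainment at $\theta=\pi$ only make the argument slightly more explicit than the paper's.
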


\begin{proof}
The operator $\mathcal{A}: l^2(\mathbb{Z}) \to l^2(\mathbb{Z})$ can be represented as a convolution operator with kernel $\left\{a_k\right\}_{k=-\infty}^{\infty}$ given by
\begin{align*}
a_k =
\begin{cases}
\frac{1}{12} & \text{if } k = -1, \\
\frac{5}{6} & \text{if } k = 0, \\
\frac{1}{12} & \text{if } k = 1, \\
0 & \text{otherwise}.
\end{cases}
\end{align*}
Thus, the action of $\mathcal{A}$ can be expressed
as $\left(\mathcal{A}u\right)_i =
\sum_{k=-\infty}^{\infty} a_k u_{i-k}$.
Through discrete-time Fourier transform analysis,
the symbol function of $\mathcal{A}$ is given by
$
m(\theta) = \sum\limits_{k=-\infty}^{\infty}a_k e^{-\mathrm{i}
 k \theta}, \;\theta \in [0, 2\pi).
$
Substituting the kernel coefficients and using Euler's formula yields
\begin{align*}
m(\theta) = \frac{1}{12} e^{\mathrm{i}\theta} + \frac{5}{6}
+ \frac{1}{12} e^{-\mathrm{i}\theta} = \frac{5 + \cos \theta}{6}.
\end{align*}
Since $\cos \theta \in [-1,1]$, we have $m(\theta) \in [{2}/{3},1]$. The positivity of $m(\theta)$ guarantees the invertibility of $\mathcal{A}$.

The symbol function of the inverse operator $\mathcal{A}^{-1}$ is given by
$
\frac{1}{m(\theta)} = \frac{6}{5 + \cos \theta},
$
from which we obtain the operator norm
\begin{align*}
\left\|\mathcal{A}^{-1}\right\| = \sup_{\theta \in [0, 2\pi)} \left|\frac{1}{m(\theta)}\right| = \sup_{\theta \in [0, 2\pi)}
\left|\frac{6}{5 + \cos \theta}\right| = \frac{3}{2}.
\end{align*}
\end{proof}

Based on these preliminary results, we now
establish the main convergence theorem.

\begin{theorem}\label{Th.4.8}
Let $u(x,t)$ be the exact solution of equation
(\ref{eq.1.1}) satisfying the regularity assumptions,
and let $f(u)$ satisfy the Lipschitz condition,
i.e.,
$\left|f(u^1) - f(u^2)\right| \leq L|u^1 - u^2|
$
for some constant $L>0$. Define the error term
\begin{align*}
e_{i}^n := u(x_i,t_n) - u_{i}^n, \quad 0 \leq n \leq N.
\end{align*}
Then, under the condition that $\tau_{\max} \leq \sqrt[\alpha]{\frac{2\epsilon^2}{\kappa L^2 \Gamma(2-\alpha)}}$ for $1 \leq n \leq N$, the following error estimate holds:
\begin{align*}
\left\| e^n\right\| \leq C_1 C\sqrt{b-a}
\begin{cases}
\left(\tau^{2-\alpha}+h^4\right), & n =1, \\
\left(\tau^{3-\alpha}+h^4\right), & 2\leq n \leq N.
\end{cases}
\end{align*}
\end{theorem}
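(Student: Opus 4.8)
The plan is to run an energy argument for the error that parallels the proof of the discrete energy law (Theorem~\ref{Th.4.5}), but with the consistency residual playing the role of a forcing term, and then to close the resulting recursion with the discrete Gr\"onwall inequality of Lemma~\ref{Le.4.6}. First I would subtract the scheme (\ref{eq.3.5}) from the consistency identity (\ref{eq.3.3}) satisfied by the exact solution. Writing $e^n$ for the $u$-error and $e_v^n := v(\cdot,t_n)-v^n$ for the auxiliary error, this yields the coupled error system
\begin{align*}
\mathscr{A}\sum_{k=1}^n B_{n-k}^{(n)}\triangledown_\tau e^k &= \kappa\delta_x^2\bigl(f(u(\cdot,t_n))-f(u^n)\bigr)+\kappa\varepsilon^2\delta_x^2 e_v^n + r^n, \\
\mathscr{A}e_v^n &= -\delta_x^2 e^n + s^n,
\end{align*}
where $r^n$ obeys the bound (\ref{eq.3.4}) and $s^n=\mathcal{O}(h^4)$ is the spatial residual of the second equation. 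Using the second relation I would eliminate $e_v^n=-\mathcal{H}e^n+\mathscr{A}^{-1}s^n$, observe that $e^0=0$ because the initial data are taken exactly, and rewrite the first equation in the $\mathscr{A}^{-1}$-reduced form of (\ref{eq.4.1}).

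Next I would test this reduced error equation with the multiplier $\kappa^{-1}(-\mathcal{H})^{-1}\triangledown_\tau e^n$, exactly as in (\ref{eq.4.3}). On the left-hand side, Theorem~\ref{Th.2.6} supplies the discrete gradient structure, bounding the fractional term below by the telescoping difference $\kappa^{-1}\bigl(\mathcal{G}[\triangledown_\tau e^n]-\mathcal{G}[\triangledown_\tau e^{n-1}],1\bigr)_{-\mathcal{H}}$ together with the coercive contribution $\tfrac{q(\rho_n,\rho_{n+1},\alpha)}{2\kappa\tau_n^\alpha\Gamma(3-\alpha)}\|\triangledown_\tau e^n\|_{-\mathcal{H}}^2$. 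On the right-hand side the linear term reproduces, via the identity for $(\mathcal{H}u^n,\triangledown_\tau u^n)$ used in Theorem~\ref{Th.4.5}, the telescoping quantity $\tfrac{\varepsilon^2}{2}\bigl(\|\nabla_h e^{n-1}\|^2-\|\nabla_h e^n\|^2\bigr)$ plus the favourable term $-\tfrac{\varepsilon^2}{2}\|\nabla_h\triangledown_\tau e^n\|^2$ and a controllable $s^n$-contribution. The nonlinear term collapses to $-\bigl(f(u(\cdot,t_n))-f(u^n),\triangledown_\tau e^n\bigr)$, which by the Lipschitz hypothesis is at most $L\|e^n\|\,\|\triangledown_\tau e^n\|$; splitting this by Young's inequality and invoking the generalized H\"older inequality $\|\triangledown_\tau e^n\|^2\le\varepsilon^2\|\nabla_h\triangledown_\tau e^n\|^2+\tfrac{1}{4\varepsilon^2}\|\triangledown_\tau e^n\|_{-\mathcal{H}}^2$ of Theorem~\ref{Th.4.5}, the $\|\nabla_h\triangledown_\tau e^n\|^2$ part is absorbed by the favourable term and the $-\mathcal{H}$ part by the coercivity furnished by the discrete gradient structure, leaving a genuine Gr\"onwall source proportional to $L^2\|e^n\|^2$. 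It is precisely the requirement that this absorption leave nonnegative leftover coercivity at the current step that produces the step restriction $\tau_{\max}^\alpha\le 2\varepsilon^2/(\kappa L^2\Gamma(2-\alpha))$; the truncation terms are estimated by Cauchy--Schwarz and Young, using Lemma~\ref{Le.4.1} to pass between the $\|\cdot\|$, $\|\cdot\|_{-\mathcal{H}}$ and $\|\nabla_h\cdot\|$ norms.

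I would then sum the one-step inequality over the time levels. The left-hand side telescopes to $\kappa^{-1}\bigl(\mathcal{G}[\triangledown_\tau e^n],1\bigr)_{-\mathcal{H}}+\tfrac{\varepsilon^2}{2}\|\nabla_h e^n\|^2$ (the $n{=}0$ data vanishing), which controls $\|e^n\|^2$ through a discrete Poincar\'e inequality, while the right-hand side becomes a Gr\"onwall sum $\sum_k g_k\|e^k\|^2$ plus the accumulated residual. Applying Lemma~\ref{Le.4.6} with $\phi_n=\|e^n\|^2$ then converts the accumulated residual into the final bound; converting the pointwise estimates of (\ref{eq.3.4}) (and Theorem~\ref{Th.2.1}) into the discrete $L^2$-norm supplies the factor $\sqrt{b-a}$ and the orders $\tau^{3-\alpha}+h^4$. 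The case $n=1$ is treated separately: since $\triangledown_\tau e^1=e^1$, a single application of the diagonal coercivity $B_0^{(1)}=\tau_1^{-\alpha}/\Gamma(2-\alpha)$ together with the first-step residual $\mathcal{O}(\tau^{2-\alpha}+h^4)$ yields the stated $\tau^{2-\alpha}+h^4$ order.

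The main obstacle I anticipate is the treatment of the nonlinear term together with the sharp accumulation of the residual. Controlling $-\bigl(f(u(\cdot,t_n))-f(u^n),\triangledown_\tau e^n\bigr)$ so that its $\triangledown_\tau e^n$ factor is entirely swallowed by the coercive and gradient terms---rather than merely bounded---is what pins down the admissible time-step, and it must be balanced delicately against the favourable terms produced by the discrete gradient structure. Equally delicate is recovering the \emph{optimal} order $\tau^{3-\alpha}$ (rather than a weaker power) from the local residual: a naive absorption of $r^n$ into the $\tau_n^{-\alpha}$-weighted coercive term degrades the rate, so the residual must be accumulated carefully (for instance by summation by parts in time, to preserve the $\tau_n$ weighting) before the Gr\"onwall step of Lemma~\ref{Le.4.6} is applied.
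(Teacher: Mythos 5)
Your proposal takes a genuinely different route from the paper, and the route has a gap that you yourself flag but do not close. The paper never tests the error equation with the energy multiplier $(-\mathcal{H})^{-1}\triangledown_\tau e^n/\kappa$. Instead it tests the reformulated error equation (\ref{eq.4.8}) directly with $e^n$, estimates both sides by elementary Cauchy--Schwarz and Young inequalities (the Lipschitz term is handled as $\kappa L\|\mathcal{H}e^n\|\,\|e^n\|\le\kappa\varepsilon^2\|\mathcal{H}e^n\|^2+\tfrac{\kappa L^2}{4\varepsilon^2}\|e^n\|^2$, the first piece cancelling against $-\kappa\varepsilon^2\|\mathcal{H}e^n\|^2$), and arrives at the pointwise recursion (\ref{eq.4.10}), $\bigl(c_0^{(n)}-\tfrac{\kappa L^2}{2\varepsilon^2}\bigr)\|e^n\|^2\le C_n\|r^n\|^2+\sum_{k<n}G^k\|e^k\|^2$, to which Lemma~\ref{Le.4.6} is applied with $\phi_n=\|e^n\|^2$. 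That is exactly where the stated restriction $\tau_{\max}^\alpha\le 2\varepsilon^2/(\kappa L^2\Gamma(2-\alpha))$ comes from (it makes $c_0^{(n)}\ge\kappa L^2/(2\varepsilon^2)$), and it is why neither Theorem~\ref{Th.2.6}, nor the mesh-ratio hypothesis $1\le\rho_n\le\rho^*(\alpha)$, nor any telescoping energy enters the paper's convergence proof. Crucially, in this route the residual $r^n$ is never weighed against a $\tau_n^{-\alpha}$-scaled coercive term, so the order $\tau^{3-\alpha}$ falls out directly.

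The concrete gap in your argument is the residual accumulation. With your multiplier, the only coercivity available in $\triangledown_\tau e^n$ at step $n$ is $\tfrac{q(\rho_n,\rho_{n+1},\alpha)}{2\kappa\tau_n^\alpha\Gamma(3-\alpha)}\|\triangledown_\tau e^n\|_{-\mathcal{H}}^2$, and part of it is already consumed absorbing the Lipschitz term via the generalized H\"older inequality. Absorbing $(r^n,\triangledown_\tau e^n)_{-\mathcal{H}}$ into what remains costs a factor $\tau_n^{\alpha}$ on $\|r^n\|^2$, and summing over the levels gives $\sum_n\tau_n^\alpha\|r^n\|^2\sim T\tau^{\alpha-1}\cdot\tau^{2(3-\alpha)}=T\tau^{5-\alpha}$, hence only $\|e^n\|\lesssim\tau^{(5-\alpha)/2}$ --- short of the claimed $\tau^{3-\alpha}$ by $(1-\alpha)/2$ for every $\alpha\in(0,1)$. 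Saying the residual ``must be accumulated carefully, for instance by summation by parts in time'' is the statement of the problem, not its solution: that device requires bounds on $\triangledown_\tau r^n$ (i.e.\ extra temporal regularity of the consistency error, which Theorem~\ref{Th.2.1} and (\ref{eq.3.4}) do not provide) and a treatment of the resulting endpoint terms, none of which is supplied. Two further mismatches with the statement being proved: your route needs the ratio condition $\rho_n\le\rho^*(\alpha)$ as an additional hypothesis, and the step restriction it naturally produces is the one of Theorem~\ref{Th.4.5} (through $q$ and $\varepsilon$), not the stated $\tau_{\max}^\alpha\le 2\varepsilon^2/(\kappa L^2\Gamma(2-\alpha))$.
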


\begin{proof}
First, by subtracting equation (\ref{eq.3.5}) from equation
 (\ref{eq.3.3}), we obtain the following error equation:
\begin{equation}\label{eq.4.7}
\begin{cases}
\mathscr{A} \sum\limits_{k=1}^{n}B_{n-k}^{(n)} \triangledown_\tau e_i^k = \kappa \delta_x^2 \left(f(u(x_i,t_n))-f(u_i^n)\right)
+ \kappa \varepsilon^2 \delta_x^2 \left(v(x_i,t_n)- v_i^n\right) + r_{i}^{n}, \\
\mathscr{A} \left(v(x_i,t_n)- v_i^n\right) = -\delta_x^2 e_i^n, \\
e_{i}^{0} = 0,
\end{cases}
\end{equation}
for $1\leq i \leq M-1, \; 1\leq n \leq N$.
For subsequent analysis, we reformulate equation (\ref{eq.4.7}) as
\begin{equation}\label{eq.4.8}
\begin{aligned}
&\left( \theta c_{0}^{(n)}+ \frac{\rho_{n}}{1+\rho_{n}}d_{0}^{(n)} \right) \triangledown_\tau e^{n} - \frac{\rho_{n}^2}{1+\rho_{n}}d_{0}^{(n)} \triangledown_\tau e^{n-1} + \widetilde{c}_{0}^{(n)} e^{n}
- \sum\limits_{k=1}^{n-1} \left( \widetilde{c}_{n-k-1}^{(n)} - \widetilde{c}_{n-k}^{(n)} \right) e^{k} \\
= &\kappa \mathcal{H} \left(f\left(u\left(x_i,t_n\right)\right)
-f\left(u_i^n\right)\right) - \kappa
\varepsilon^2 \mathcal{H} \left(\mathcal{H} e^n\right) + \mathscr{A}^{-1}r^n,
\end{aligned}
\end{equation}
valid for $n \geq 2$.
Taking the inner product of (\ref{eq.4.8}) with $e^n$ and applying the Lipschitz condition leads to
\begin{equation}\label{eq.4.9}
\begin{aligned}
&\Bigg(\left(\theta c_{0}^{(n)} + \frac{\rho_{n}}{1+\rho_{n}}d_{0}^{(n)}\right) \triangledown_\tau e^{n} - \frac{\rho_{n}^2}{1+\rho_{n}}d_{0}^{(n)} \triangledown_\tau e^{n-1}
+ \widetilde{c}_{0}^{(n)} e^{n} - \sum_{k=1}^{n-1} \left(\widetilde{c}_{n-k-1}^{(n)} - \widetilde{c}_{n-k}^{(n)}\right) e^{k}, e^n\Bigg) \\
\leq &\kappa L|\left(\mathcal{H} e^n,e^n\right)| - \kappa \varepsilon^2
\left\|\mathcal{H} e^n\right\|^2 + \left(\mathscr{A}^{-1}r^n, e^n\right).
\end{aligned}
\end{equation}

The right-hand side of (\ref{eq.4.9}) is estimated through careful application of the Cauchy-Schwarz inequality and Young's inequality:
	\begin{align*}
		& \kappa L|\left(\mathcal{H}e^n,e^n \right)|-\kappa \varepsilon^2 \|\mathcal{H} e^n \|^2
		+\left(\mathscr{A}^{-1}r^n, e^n\right)
		\leq  \kappa L\left(\frac{\varepsilon^2}{L}\|\mathcal{H} e^n\|^2+\frac{L} {4\varepsilon^2 }\|e^n\|^2 \right) \\
		&-\kappa \varepsilon^2 \|\mathcal{H} e^n \|^2+\left( \frac{\widetilde{c}_{n-1}^{(n)} }{2}+ \frac{ d_{1}^{(n)} }	{2\rho_{n}\left(1+\rho_{n} \right)}\right)\|e^n\|^2+\frac{\|\mathscr{A}^{-1}\|^2} {2\widetilde{c}_{n-1}^{(n)}+ \frac{2 d_{1}^{(n)} }	{\rho_{n}\left(1+\rho_{n} \right)} }\|r^n\|^2\\
		\leq&\left(\frac{\kappa L^2} {4\varepsilon^2 }+ \frac{\widetilde{c}_{n-1}^{(n)} }{2}+ \frac{ d_{1}^{(n)} }	{2\rho_{n}\left(1+\rho_{n} \right)}\right)\|e^n\|^2+\frac{9} {8\widetilde{c}_{n-1}^{(n)}+ \frac{8 d_{1}^{(n)} }	{\rho_{n}\left(1+\rho_{n} \right)} }\|r^n\|^2.
	\end{align*}
Similarly, the left-hand side of (\ref{eq.4.9}) can be estimated as:
\begin{align*}
		&\Bigg{(} \left( \theta c_{0}^{(n)}+  \frac{\rho_{n}}{1+\rho_{n}}d_{0}^{(n)} \right) \triangledown_\tau e^{n} -\frac{\rho_{n}^2}  {1+\rho_{n}}d_{0}^{(n)} \triangledown_\tau e^{n-1}  +\widetilde{c}_{0}^{(n)}  e^{n} -\sum\limits_{k=1}^{n-1} \left( \widetilde{c}_{n-k-1}^{(n)} -\widetilde{c}_{n-k}^{(n)} \right)  e^{k} ,e^n\Bigg{)}\\
		\ge &\frac{\theta c_{0}^{(n)}+  \frac{\rho_{n}}{1+\rho_{n}}d_{0}^{(n)}} {2}\left(\|e^n \|^2 -\|e^{n-1} \|^2\right)-\frac{\rho_{n}^2 d_{0}^{(n)}}{1+\rho_{n}}\left(\frac{1}{2\rho_{n}} \|e^n \|^2 +\rho_{n}\|e^{n-1} \|^2 +\rho_{n}\|e^{n-2} \|^2\right) \\ &+ \widetilde{c}_{0}^{(n)} \|e^n \|^2 -\sum\limits_{k=1}^{n-1}\frac{\widetilde{c}_{n-k-1}^{(n)} -\widetilde{c}_{n-k}^{(n)} }{2}\left(\|e^n \|^2+ \|e^k \|^2 \right) \\
		=&\left(\frac{\theta} {2}c_{0}^{(n)}+\frac{1 }{2}\widetilde{c}_{0}^{(n)} +\frac{1}{2}\widetilde{c}_{n-1}^{(n)}\right) \|e^n \|^2 -\left(\frac{\theta c_{0}^{(n)}+  \frac{\rho_{n}}{1+\rho_{n}}d_{0}^{(n)}} {2} +\frac{\rho_{n}^3 d_{0}^{(n)}}{1+\rho_{n}}\right)\|e^{n-1} \|^2\\
		&-\frac{\rho_{n}^3 d_{0}^{(n)}}{1+\rho_{n}} \|e^{n-2} \|^2
		-\sum\limits_{k=1}^{n-1}\frac{\widetilde{c}_{n-k-1}^{(n)} -\widetilde{c}_{n-k}^{(n)} }{2}\|e^k \|^2.
	\end{align*}
Combining these estimates leads to the following key recursive inequality
\begin{equation}\label{eq.4.10}
\begin{aligned}
\left(c_{0}^{(n)} - \frac{\kappa L^2}{2\varepsilon^2}\right)
\left\|e^n\right\|^2 \leq &\frac{9}{4\widetilde{c}_{n-1}^{(n)} + \frac{4 d_{1}^{(n)}}{\rho_{n}(1+\rho_{n})}}\left\|r^n\right\|^2
+ \sum\limits_{k=1}^{n-1}G^k\left\|e^k\right\|^2,
\end{aligned}
\end{equation}
where the coefficients $G^k$ are defined as
\begin{align*}
G^k =
\begin{cases}
\widetilde{c}_{n-k-1}^{(n)} - \widetilde{c}_{n-k}^{(n)},
 & k =1,2,\cdots,n-3, \; n\geq 4, \\
\widetilde{c}_{1}^{(n)} - \widetilde{c}_{2}^{(n)} + \frac{2\rho_{n}^3 d_{0}^{(n)}}{1+\rho_{n}}, & k=n-2, \; n \geq 3, \\
\widetilde{c}_{0}^{(n)} - \widetilde{c}_{1}^{(n)} + \theta c_{0}^{(n)} + \frac{\rho_{n}}{1+\rho_{n}}d_{0}^{(n)}
+ \frac{2\rho_{n}^3 d_{0}^{(n)}}{1+\rho_{n}}, & k=n-1, \; n\geq 2.
\end{cases}
\end{align*}
The condition $\tau_{\max} \leq \sqrt[\alpha]{\frac{2\epsilon^2}{\kappa L^2 \Gamma(2-\alpha)}}$ ensures that
$
    c_{0}^{(n)} = \frac{1}{\tau_{n}^\alpha \Gamma(2-\alpha)} \geq \frac{\kappa L^2}{2\varepsilon^2}.
$
From (\ref{eq.3.4}), the truncation error satisfies
\begin{equation}\label{eq.4.11}
\left\|r^n\right\|^2 \leq (b-a)C^2
\begin{cases}
\left(\tau^{2-\alpha}+h^4\right)^2, & n =1, \\
\left(\tau^{3-\alpha}+h^4\right)^2, & 2\leq n \leq N.
\end{cases}
\end{equation}

Applying Lemma \ref{Le.4.6} to (\ref{eq.4.10}) and incorporating (\ref{eq.4.11}) establishes the desired error estimate:
\begin{align*}
    \left\| e^n \right\| \leq C_1 C \sqrt{b-a}
    \begin{cases}
        \left(\tau^{2-\alpha} + h^4\right), & n = 1, \\
        \left(\tau^{3-\alpha} + h^4\right), & 2 \leq n \leq N,
    \end{cases}
\end{align*}
where $C_1 = \sqrt{\frac{9}{\left(c_{0}^{(n)} - \frac{\kappa L^2}{2\varepsilon^2}\right) \left(4\widetilde{c}_{n-1}^{(n)} + \frac{4 d_{1}^{(n)}}{\rho_{n}(1+\rho_{n})}\right)}} \exp\left(\sum\limits_{k=1}^{n-1} \frac{G^k}{c_{0}^{(n)} - \frac{\kappa L^2}{2\varepsilon^2}}\right)$. This ends all the proofs.
\end{proof}

\section{Numerical examples}\label{sec: Numerical examples}
This section presents detailed numerical implementations and validation of the proposed methods. The examples demonstrate the effectiveness of the non-uniform temporal grid and verify the theoretical results established in previous sections.

\subsection{Implementation details}

Following Theorem \ref{Th.2.6}, the time-step ratios $\rho_{k}$ must satisfy $1 \leq \rho_{k} \leq \rho^*(\alpha)$ for $k\ge 2$. To meet this requirement, we construct a special non-uniform grid defined by
\begin{align*}
\tau_k=t_{k}-t_{k-1}=\frac{(2k+1)^3}{N(N+2)(2N^2+4N+3)}T, \quad k=1,2,\cdots,N,
\end{align*}
which guarantees the condition:
\begin{align*}
1 < \rho_{k} = \left(\frac{2k+1}{2k-1}\right)^3 \leq \left(\frac{5}{3}\right)^3 \approx 4.6296296 < 4.7476114 \approx \overline{\rho} < \rho^*(\alpha).
\end{align*}
 These carefully designed meshes will be used throughout our numerical experiments for solving the TFCH equation (\ref{eq.1.1}).

 The nonlinear fully discrete scheme (\ref{eq.3.5}) is solved using a simple iteration method with termination error tolerance $10^{-10}$.

\subsection{Convergence tests}

To validate the convergence order of our constructed numerical differential formula,
the following computational experiment is conducted.

\begin{example}\label{ex.5.1}
The convergence properties of the discrete Caputo derivative using the L2 approximation formula are investigated through numerical verification. The test function
\begin{align*}
w(t) = t^{3+\alpha},\quad t\in [0,1],
\end{align*}
is employed, which possesses sufficient regularity with $w(t) \in C^3[0, 1]$. The exact Caputo derivative of this function is analytically known as
\begin{align*}
\partial^\alpha_t w(t)=\frac{\Gamma(4+\alpha)}{\Gamma(4)}t^3.
\end{align*}
\end{example}

The numerical accuracy is measured by the maximum norm error
\begin{align*}
    e(N) := \max_{1\leq n \leq N} \|w(t_n) - w^n\|_\infty,
\end{align*}
with the corresponding convergence order computed via
\begin{align*}
    \text{Order} := \log_2\left(\frac{e(N)}{e(2N)}\right).
\end{align*}

The numerical scheme (\ref{eq.2.5}) is implemented on the proposed nonuniform mesh for fractional orders $\alpha =0.3, 0.5, 0.7, 0.9$. Table~\ref{table.5.1} presents the detailed numerical results, including the errors and computed convergence orders for various temporal discretizations.

\begin{table}[htbp]
\caption{Convergence results for L2 approximation of Caputo derivative}\label{table.5.1}
\begin{center}
\renewcommand{\arraystretch}{1.1}
\begin{tabular}{c|cc|cc|cc|cc}
&\multicolumn{2}{c|}{$\alpha=0.3$}
&\multicolumn{2}{c|}{$\alpha=0.5$}
&\multicolumn{2}{c|}{$\alpha=0.7$}
&\multicolumn{2}{c}{$\alpha=0.9$} \\
\cline{2-3} \cline{4-5} \cline{6-7}\cline{8-9}
$N$&$e(N)$ & Order &$e(N)$ & Order &$e(N)$ & Order &$e(N)$ & Order\\
\hline
250 &3.89e-06 &-- &2.25e-05 &-- &9.30e-05 &-- &3.19e-04 &-- \\
500 &5.72e-07 &2.77 &3.98e-06 &2.50 &1.91e-05 &2.28 &7.58e-05 &2.07\\
1000 &8.36e-08 &2.77 &6.99e-07 &2.51 &3.91e-06 &2.29 &1.78e-05 &2.09\\
2000 &1.22e-08 &2.77 &1.23e-07 &2.51 &7.96e-07 &2.30 &4.18e-06 &2.09\\
4000 &1.80e-09 &2.77 &2.15e-08 &2.51 &1.62e-07 &2.30 &9.78e-07 &2.10\\
\hline
\end{tabular}
\end{center}
\end{table}
The numerical results demonstrate that the proposed mesh achieves the theoretical convergence order of $3-\alpha$ for all considered fractional orders. These findings provide strong evidence for two key conclusions: (1) the nonuniform mesh effectively handles the weak singularity of fractional operators at the initial time point, and (2) the numerical results validate the theoretical predictions established in Theorem~\ref{Th.2.1}.

\subsection{TFCH equation simulations}

The proposed numerical method is applied to the TFCH equation with various parameter configurations to examine its performance characteristics.

\begin{example}\label{ex.5.2}
 The time-fractional Cahn-Hilliard equation under consideration takes the form:
\begin{align*}
    \partial^\alpha_t u(x,t) = \kappa \Delta f(u) - \kappa \varepsilon^2 \Delta^2 u(x,t), \quad x \in (0,1),\; t \in (0,1],
\end{align*}
with the nonlinear term given by $f(u) = u^3 - u$. To assess the convergence properties of the numerical scheme, the following smooth initial condition is employed:
\begin{align*}
    u(x,0) = x^4(1-x)^4.
\end{align*}
\end{example}

In the absence of an analytical solution for this problem, the reference solution $U^{n}_{ref}$ is generated numerically using a refined discretization with $M = 60$ spatial grid points and $N_0 = 200$ temporal steps. The temporal convergence order at each time level $t_n$ is computed via the formula:
\begin{align*}
\mathrm{Order} := \frac{\log(e(N_1)/e(N_2))}{\log(N_2/N_1)},\quad \text{where}\;\;
e(N) := \left\|U^N-U^{N_0}_{ref}\right\|_\infty.
\end{align*}

The convergence analysis is performed with fixed parameters $\varepsilon = 0.1$, $\kappa = 0.01$, and final simulation time $T = 1$. Following the validation approach described in Example~\ref{ex.5.1}, Table~\ref{table.5.2} systematically presents the numerical errors and corresponding convergence orders for fractional orders $\alpha=0.3, 0.5, 0.7, 0.9$. The tabulated results demonstrate the consistent convergence behavior of the numerical method across different fractional orders and parameter settings.

\begin{table}[htbp]
\caption{Numerical errors and temporal convergence orders for the TFCH equation}
\label{table.5.2}
\centering
\renewcommand{\arraystretch}{1.1}
\begin{tabular}{c|cc|cc|cc|cc}
&\multicolumn{2}{c|}{$\alpha=0.3$}
&\multicolumn{2}{c|}{$\alpha=0.5$}
&\multicolumn{2}{c|}{$\alpha=0.7$}
&\multicolumn{2}{c}{$\alpha=0.9$} \\
\cline{2-3} \cline{4-5} \cline{6-7}\cline{8-9}
$N$&e(N) & Order &e(N) & Order &e(N) & Order &e(N) & Order\\
\hline
15 &6.09e-07 &-- &1.77e-06 &-- &2.50e-06 &-- &1.30e-06 &-- \\
18 &3.66e-07 &2.796 &1.12e-06&2.522 &1.62e-06&2.369 &8.84e-07&2.130\\
21 &2.40e-07 &2.738&7.54e-07&2.547&1.13e-06&2.382 &6.36e-07&2.139\\
24 &1.68e-07 &2.677&5.40e-07&2.498&8.18e-07&2.387 &7.76e-07&2.160\\
\hline
\end{tabular}
\end{table}

\subsection{Energy behavior analysis}

The energy properties of the numerical solutions are systematically investigated through the following computational experiments.

\begin{example}\label{ex.5.3}
Building upon the configuration specified in Example~\ref{ex.5.2}, numerical experiments are performed with fixed parameters $\varepsilon = 0.1$ and $\kappa = 0.01$, employing a computational mesh with $M = 60$ spatial nodes and $N = 200$ temporal steps.
\end{example}

The energy-preserving properties of the stabilized scheme (\ref{eq.3.5}) are examined by monitoring the evolution of both the discrete compatible energy $\mathcal{E}^n$ and the discrete free energy $E^n$ across successive time levels.
Computations are carried out for fractional orders $\alpha=0.2, 0.4, 0.6, 0.8$, with the results presented in Figure~\ref{Fig.5.1}. The numerical data clearly demonstrate that the discrete energy decay property is maintained throughout all simulations. Specifically, the computed solutions exhibit monotonic non-increasing energy profiles for each considered value of $\alpha$, confirming that the numerical scheme properly preserves the essential energy characteristics of the continuous system under the given initial conditions.
\begin{figure}[htbp]
    \centering
    \begin{subfigure}[b]{0.48\textwidth}
        \includegraphics[width=\textwidth]{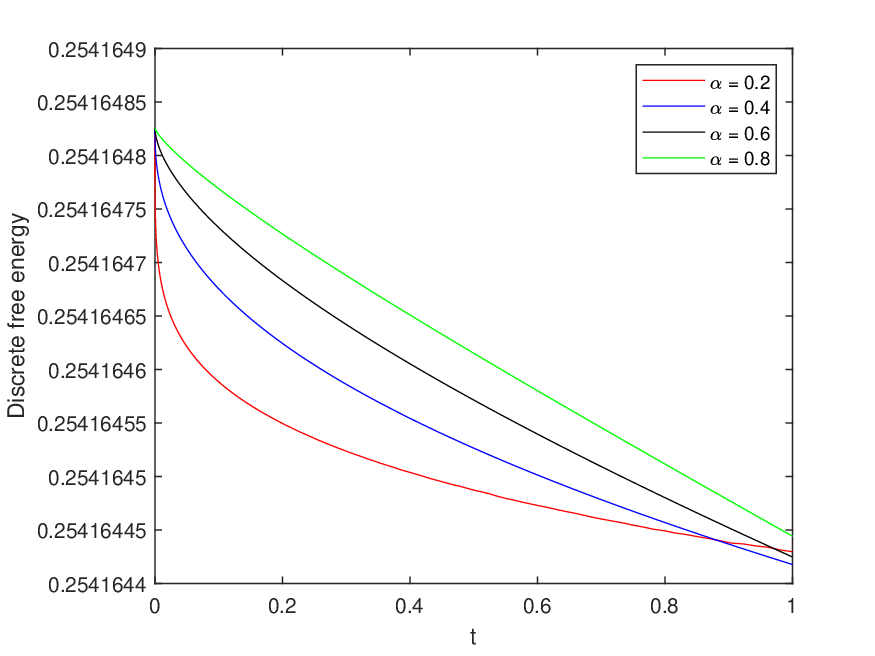} 
        \caption{Discrete free energy $E^n$ evolution}
        \label{fig:image1}
    \end{subfigure}
    \hfill 
    \begin{subfigure}[b]{0.48\textwidth}
        \includegraphics[width=\textwidth]{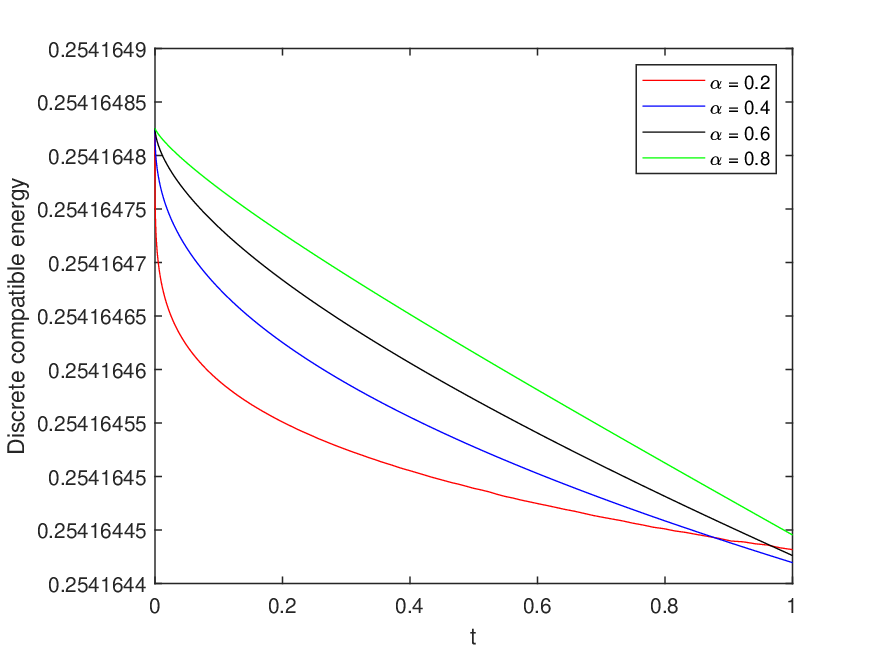} 
        \caption{Discrete compatible energy $\mathcal{E}^n$ evolution}
        \label{fig:image2}
    \end{subfigure}
    \caption{The discrete compatible energy $\mathcal{E}^n$ and the discrete free energy $E^n$ for different fractional orders $\alpha$}
    \label{Fig.5.1}
\end{figure}

\subsection{Mass conservation verification}
The mass conservation property is verified by systematic
numerical investigations, as presented in the following analysis.

\begin{example}\label{ex.5.4}
Continuing the numerical framework established in Example~\ref{ex.5.2}, the conservation of mass is examined quantitatively. The total mass at time level $t_n$ is numerically approximated through the second-order accurate composite trapezoidal rule:
\begin{align*}
\int_0^1u(x,t_n) \mathrm{d}x \approx h\left\{\frac{1}{2}\left(u_0^n+u_M^n\right)+\sum_{i=1}^{M-1}u_i^n\right\}.
\end{align*}
\end{example}

Building upon the theoretical foundation provided in Theorem~\ref{Th.4.3}, comprehensive numerical experiments are conducted for fractional orders $\alpha =0.2, 0.4, 0.6, 0.8$. The stabilized numerical scheme (\ref{eq.3.5}) is implemented with systematically varied parameters $M$, $N$, and $\varepsilon$. The resulting mass conservation properties are visualized in Figure~\ref{Fig5}, which demonstrates excellent preservation of total mass across all parameter combinations and confirms the theoretical predictions.

\begin{figure}[htbp]
    \centering
    \begin{subfigure}[b]{0.48\textwidth}
        \includegraphics[width=\textwidth]{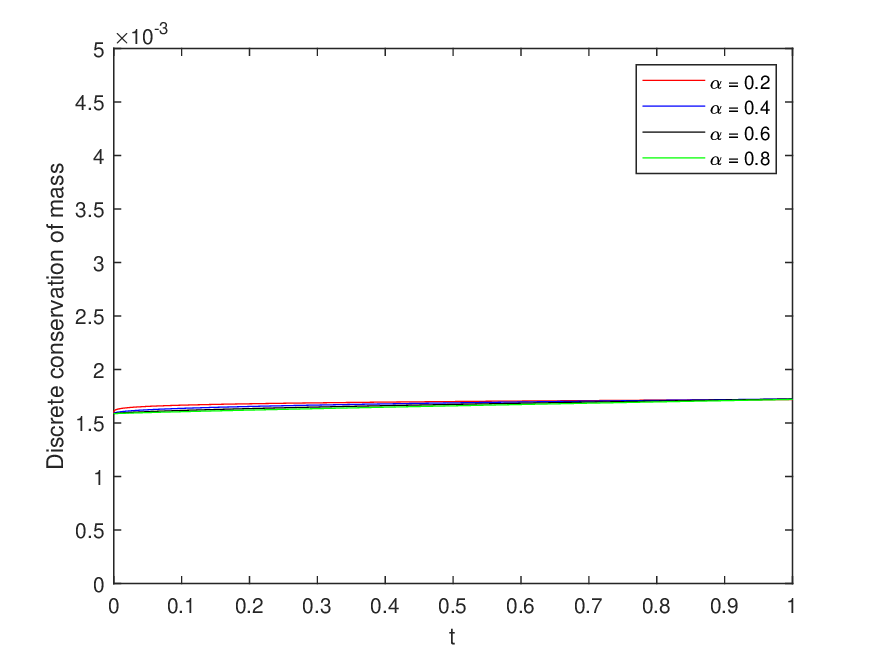}
        \caption{$\kappa=0.01$, $\varepsilon=0.1$, $M=60$, $N=200$}
    \end{subfigure}
    \hfill
    \begin{subfigure}[b]{0.48\textwidth}
        \includegraphics[width=\textwidth]{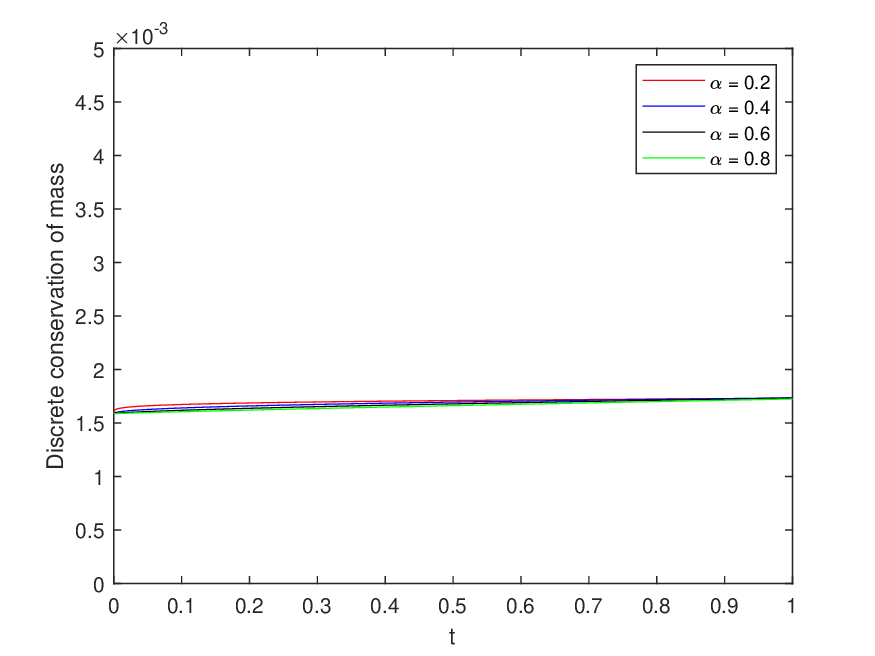}
        \caption{$\kappa=0.01$, $\varepsilon=0.1$, $M=100$, $N=200$}
    \end{subfigure}

    \vspace{0.5cm} 

    \begin{subfigure}[b]{0.48\textwidth}
        \includegraphics[width=\textwidth]{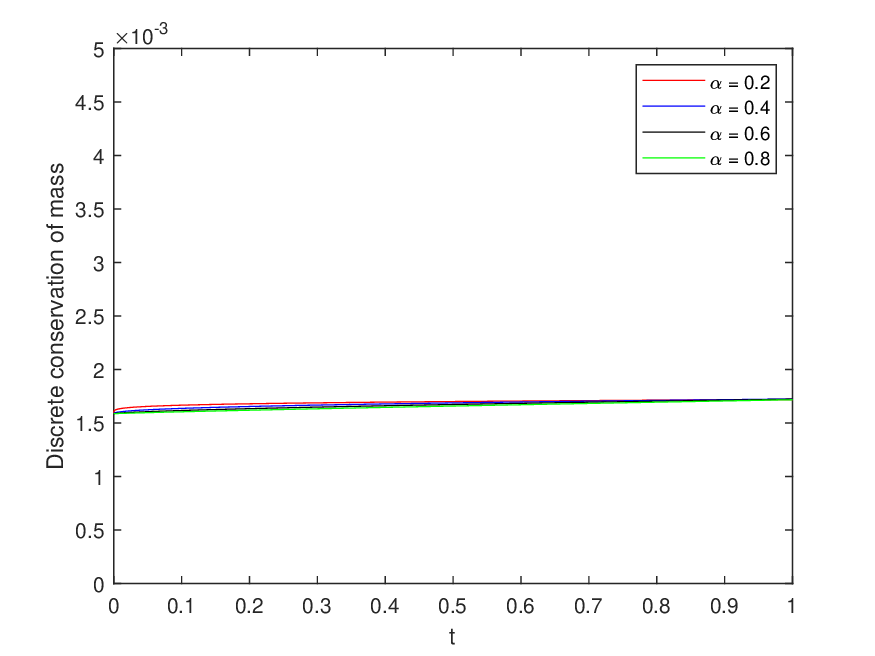}
        \caption{$\kappa=0.01$, $\varepsilon=0.1$, $M=60$, $N=100$}
    \end{subfigure}
    \hfill
    \begin{subfigure}[b]{0.48\textwidth}
        \includegraphics[width=\textwidth]{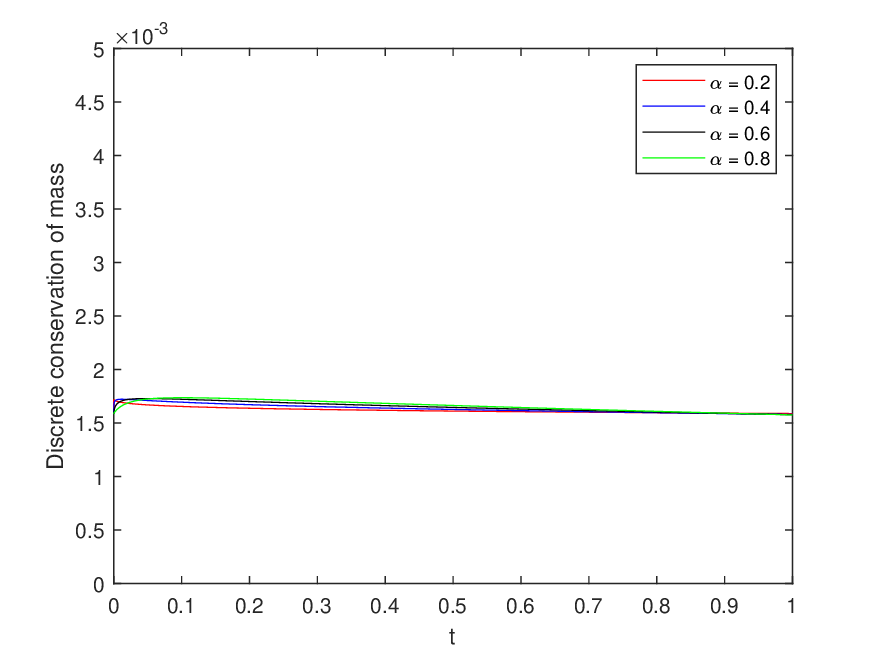}
        \caption{$\kappa=0.01$, $\varepsilon=0.5$, $M=60$, $N=200$}
    \end{subfigure}

    \caption{Comparison of total mass evolution for different parameters}
    \label{Fig5}
\end{figure}

\subsection{Exact solution verification}
Finally, we test our method against a problem with
known exact solution.

\begin{example}\label{ex.5.5}
Consider the TFCH equation with source term:
\begin{align*}
\partial^\alpha_t u(x,t) = \kappa \Delta f(u)-\kappa
\varepsilon^2 \Delta^2 u(x,t) +g(x,t), \quad x\in(0,1),\; t\in(0,1],
\end{align*}
where $f(u)=u^3-u$. The exact solution $u(x,t)=x^4 (1-x)^4
 t^{3+\alpha}$ determines the initial-boundary conditions
 and source term $g(x,t)$, given by
\begin{align*}
g(x,t)=&\frac{\Gamma(4+\alpha)}{\Gamma(4)}x^4 (1-x)^4t^3 -\kappa\left[ 132x^{10}(1-x)^{12} -288x^{11}(1-x)^{11} +132x^{12}(1-x)^{10}
\right]t^{9+3\alpha} \\
& +\kappa \varepsilon^2 \left[24(1-x)^{4}-384x (1-x)^3
+864x^2 (1-x)^2-384x^3 (1-x)+24x^4 \right]t^{3+\alpha} \\
&+\kappa\left[12x^{2}(1-x)^{4} -32x^{3}(1-x)^{3}
+12x^{4}(1-x)^{2} \right]t^{3+\alpha}.
\end{align*}
\end{example}

Fixed the constant $\kappa=0.01$ and $\varepsilon = 0.1$, Figure~\ref{Fig9} presents a systematic comparison between the exact solution $u(x,1)$ and the corresponding numerical approximations $u^N$ computed with different spatial discretization parameters $N =100,125,150,175,200$ for fractional orders $\alpha = 0.1, 0.3, 0.6, 0.9$. The numerical results demonstrate excellent agreement with the analytical solution across all parameter combinations, validating the accuracy of the proposed method.
\begin{figure}[htbp]
    \centering
    \begin{subfigure}[b]{0.48\textwidth}
        \includegraphics[width=\textwidth]{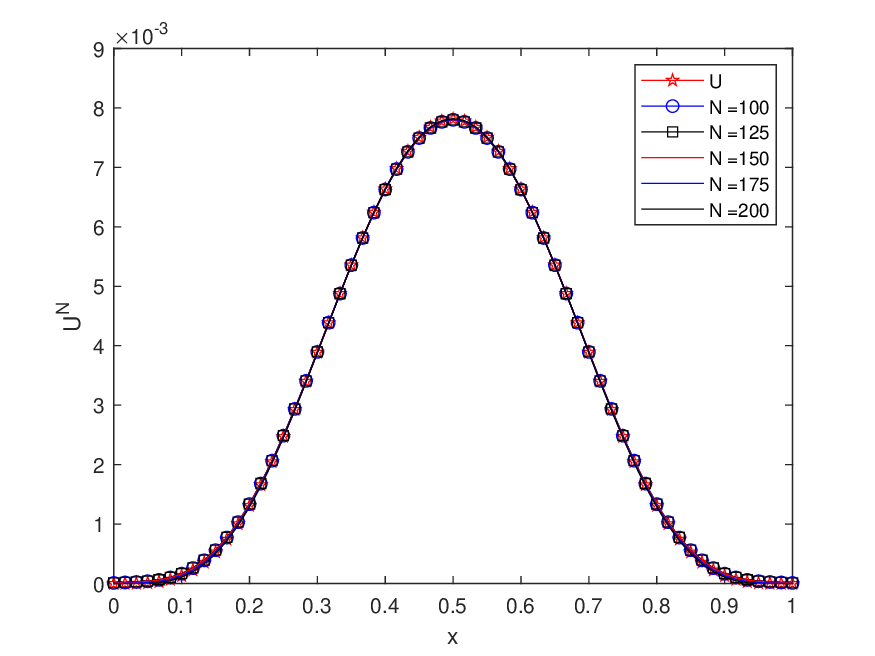}
        \caption{$\alpha=0.1$}
    \end{subfigure}
    \hfill
    \begin{subfigure}[b]{0.48\textwidth}
        \includegraphics[width=\textwidth]{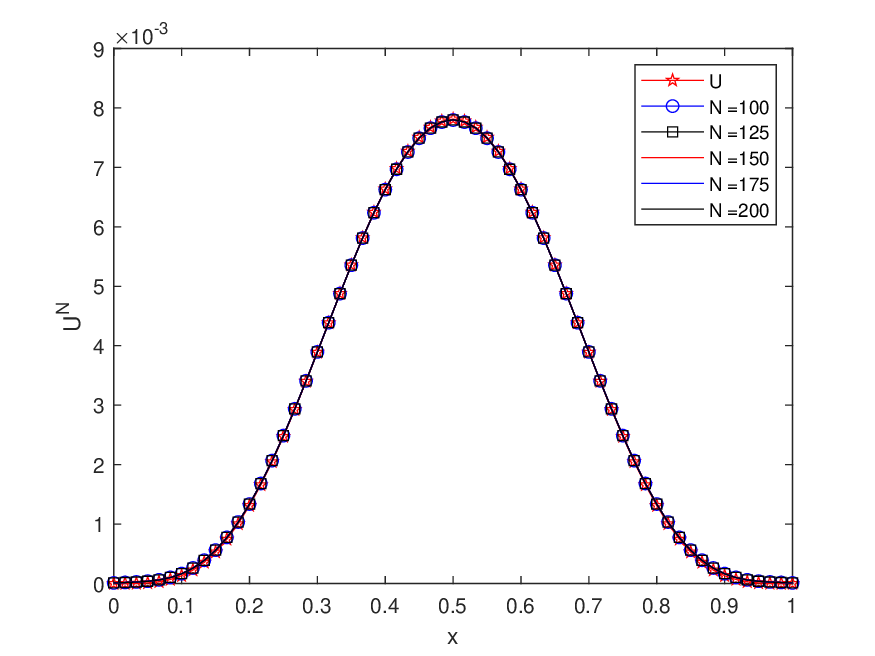}
        \caption{$\alpha=0.3$}
    \end{subfigure}

    \vspace{0.5cm} 

    \begin{subfigure}[b]{0.48\textwidth}
        \includegraphics[width=\textwidth]{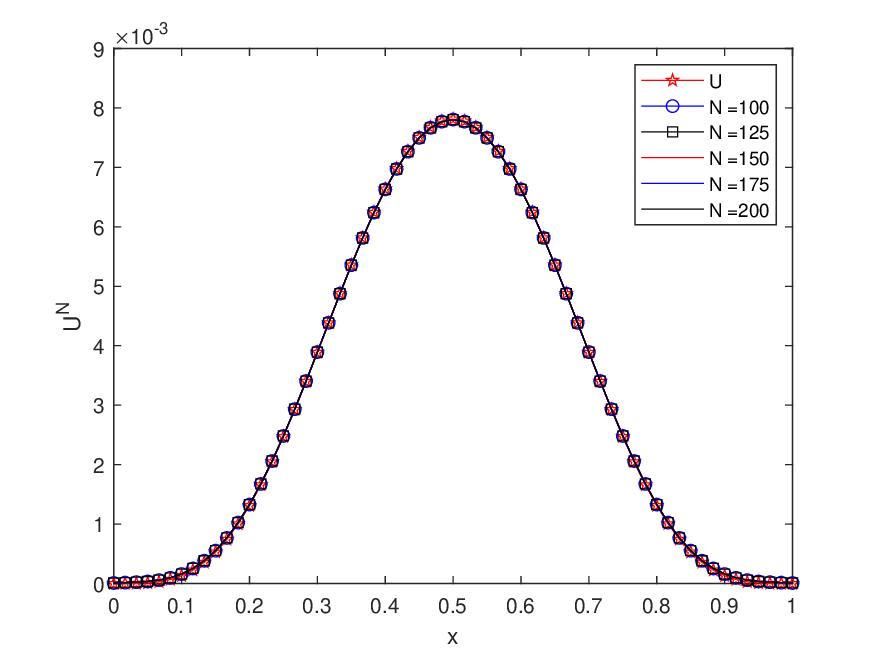}
        \caption{$\alpha=0.6$}
    \end{subfigure}
    \hfill
    \begin{subfigure}[b]{0.48\textwidth}
        \includegraphics[width=\textwidth]{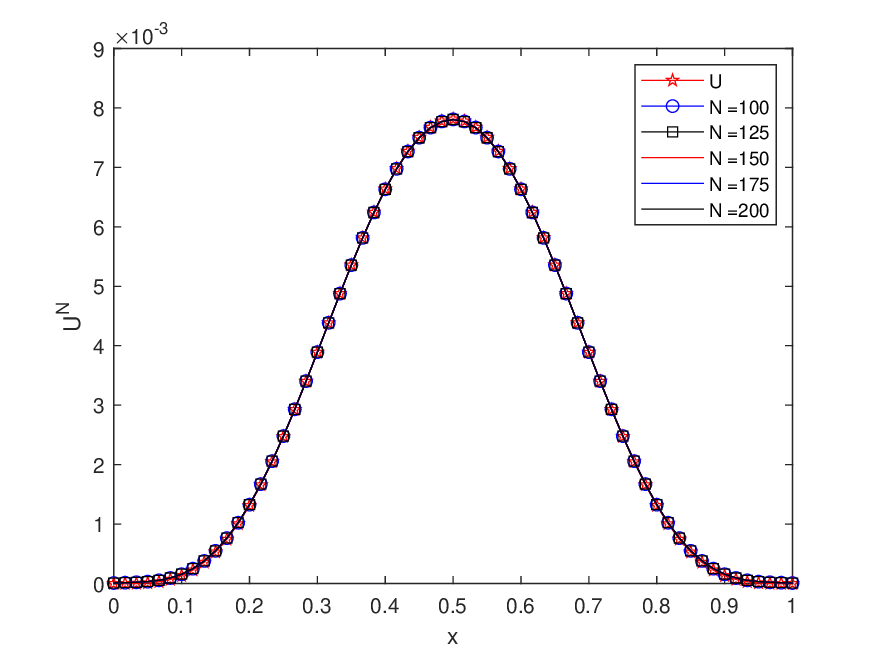}
        \caption{$\alpha=0.9$}
    \end{subfigure}

    \caption{Comparison of the exact and numerical solutions for
    different $\alpha$ values}
    \label{Fig9}
\end{figure}

The above numerical experiments provide rigorous verification of the theoretical analysis, establishing the effectiveness of the proposed numerical method for solving the time-fractional Cahn--Hilliard equation. As demonstrated in the computational results, the specially designed nonuniform temporal grid with step ratios satisfying $1 \leq \tau_k/\tau_{k-1} \leq \rho^*(\alpha)$, where $\rho^*(\alpha) > \overline{\rho} \approx 4.7476114$, successfully resolves the initial singularity while achieving the theoretically predicted convergence rates. Specifically, the numerical solutions exhibit the expected temporal convergence order of $3-\alpha$, confirming the theoretical estimates derived in Section~\ref{sec: Convergence analysis}.
Moreover, the proposed scheme maintains two fundamental physical properties of the continuous model: (1) exact volume conservation, and (2) energy dissipation throughout the simulation. These numerical observations, obtained under various parameter configurations, demonstrate the robustness of the method while preserving the essential features of the original physical system. The agreement between theoretical predictions and computational results across multiple test cases provides strong evidence for the reliability of the proposed approach.

\section{Concluding remarks}\label{sec: Concluding remarks}

This work has presented a high-order numerical scheme for the time-fractional Cahn-Hilliard equation, combining a $3-\alpha$-order temporal discretization on nonuniform grids with fourth-order spatial accuracy. The proposed method features several key theoretical advancements: an improved time-step ratio condition for the L2-type Caputo derivative approximation, rigorous proofs of unique solvability and discrete mass conservation, and establishment of energy decay properties. Numerical experiments comprehensively validate these theoretical results, demonstrating optimal convergence rates across different fractional orders while maintaining essential physical properties. The specially designed nonuniform temporal grids effectively handle the initial singularity, with the numerical results confirming both the high-order accuracy and stability of the proposed scheme.

The developed framework provides a solid foundation for future research on fractional phase-field models. Current efforts focus on extending this approach to more complex scenarios, including higher-dimensional problems, adaptive time-stepping strategies, and coupled systems involving multiple fractional operators. The combination of rigorous analysis and careful numerical implementation presented here offers a reliable template for solving challenging nonlinear fractional partial differential equations while preserving their fundamental mathematical and physical properties.

\bibliographystyle{siamplain}

\end{document}